\DeclareSymbolFont{cyrletters}{OT2}{wncyr}{m}{n}
\DeclareMathSymbol{\Sha}{\mathalpha}{cyrletters}{"58}
\theoremstyle{definition}
\newtheorem{ex}{Example}
\theoremstyle{plain}
\newtheorem{thm}{Theorem}[section]
\newtheorem{lm}[thm]{Lemma}
\newtheorem{propo}[thm]{Proposition}
\newtheorem{coro}[thm]{Corollary}
\newtheorem{conjecture}[thm]{Conjecture}
\theoremstyle{remark}
\newtheorem{rem}{Remark}
\def\C{\mathbb{C}}
\def\Z{\mathbb{Z}}
\def\N{\mathbb{N}}
\def\Q{\mathbb{Q}}
\def\E{\mathscr{E}}
\def\P{\mathbb{P}}
\def\pgcd{\mathrm{pgcd}}
\title{On the density of rational points on rational elliptic surfaces}
\author{Julie DESJARDINS}
\date{}
\begin{document}
\begin{otherlanguage}{english}

\maketitle

\abstract{
Let $\E\rightarrow\P^1_\Q$ be a non-trivial rational elliptic surface over $\Q$ with base $\P^1_\Q$ (with a section). We conjecture that any non-trivial elliptic surface has a Zariski-dense set of $\Q$-rational points. In this paper we work on solving the conjecture in case $\E$ is rational by means of geometric and analytic methods. First, we show that for $\E$ rational, the set $\E(\Q)$ is Zariski-dense when $\E$ is isotrivial with non-zero $j$-invariant and when $\E$ is non-isotrivial with a fiber of type $II^*$, $III^*$, $IV^*$ or $I^*_m$ ($m\geq0$). We also use the parity conjecture to prove analytically the density on a certain family of isotrivial rational elliptic surfaces with $j=0$, and specify cases for which neither of our methods leads to the proof of our conjecture.}

%\tableofcontents

\section{Introduction}

Let $\E$ be an elliptic surface over $\P^1_{\Q}$, i.e. a projective algebraic surface $\E$ defined over $\Q$ endowed with a morphism $\pi:\E\rightarrow \P^1_\Q$ such that, for all $t\in\P^1_\Q$ except a finite number, the fiber $\E_t:=\pi^{-1}(t)$ is a smooth projective curve of genus 1. Moreover, we suppose that there exists a section $\sigma:\P^1_\Q\rightarrow\E$ for $\pi$.

Such an elliptic surface can be written as the set of solutions in $\P_\Q^2\times\P^1_\Q$ of a Weierstrass equation \begin{equation}\label{ellipticsurfaceeqn}\E:y^2z=x^3+A(T)xz^2+B(T)z^3,\end{equation} with $A(T),B(T)\in\Z[T]$. We call \emph{generic fiber} of $\E$ the elliptic curve over $\Q(T)$, denoted by $\E_T$, whose model is given by the equation (\ref{ellipticsurfaceeqn}). We classify the elliptic surfaces according to their $j$-invariant function:
\begin{enumerate}
\item $\E$ is non-isotrivial if $j(T)$ is non-constant,
\item $\E$ is isotrivial otherwise, and admits a Weierstrass equation of the form 
\begin{enumerate}
\item $y^2=x^3+af(T)^2x+bf(t)^3$ where $a,b$ non-zero integers and $f\in\Q[T]$, (if $j(T)\in\Q\backslash\{0,1728\}$)
\item $y^2=x^3+A(T)$ where $A\in\Q[T]$ (if $j(T)=0$),
\item $y^2=x^3+B(T)x$ where $B\in\Q[T]$ (if $j(T)=1728$).
\end{enumerate}
\end{enumerate}

For almost every $t\in\P^1_\Q$, the fiber $\E_t=\pi^{-1}(t)$ is an elliptic curve over $\Q$ and the set $\E_t(\Q)$ admits a group structure. By Mordell-Weil's theorem, this group decomposes as the sum of a finitely generated free group (isomorphic to $\Z^r$) and a finite group (the torsion points). The integer $r=\mathrm{rk} (\E_t)$ is called the $\emph{Mordell-Weil rank}$ (or simply the \emph{rank}) of $\E_t$ over $\Q$.

%An elementary fact is that the set of $\Q$-rational points of $\E$ is Zariski-dense whenever there exists infinitely many fibers of non-zero rank:$$\#\{t\in\P^1_\Q(\Q)\ \vert \ \mathrm{rk} \ \E_t(\Q)>0\}=\infty.$$

We put forward the following conjecture, a variant of a conjecture of Mazur \cite[Conjecture 4]{Mazur}, where "real density" is replaced by "Zariski density".
This conjecture is already implicit in the literature, particularly in \cite{HCC}.

\begin{conjecture}\label{maconjecture}
Let $\E\rightarrow\P^1_\Q$ be a non-trivial elliptic surface over $\Q$. Then 
$\E(\Q)$ is Zariski-dense in $\E$. 
\end{conjecture}

\begin{rem}
In case $\E$ is a trivial surface, i.e. there exists a curve $E_0$ such that $\E\simeq E_0\times\P^1_\Q$ over $\Q$. One has $\E(\Q)=E_0(\Q)\times\P^1_\Q(\Q)$, and this can be dense or not dense depending on the number of points of $E_0(\Q)$.
\end{rem}

There is two approaches to solve this conjecture. We can either prove that the rank is non-zero by means of geometric arguments, or we can use the parity conjecture (a weaker version of the Birch and Swinnerton-Dyer conjecture) which links the root number of an elliptic curve $E$ to the parity of its rank :
$$W(E)=(-1)^{\mathrm{rk} E}.$$

We already have evidence (see \cite{Manduchi, Helfgott, Desjardins1}) that the rational points should be dense when the elliptic surface is non-isotrivial, because of the variation of the root number of the fibers. The articles mentioned above additionally use two conjectures of analytic number theory, the squarefree conjecture and Chowla's conjecture, which are known only for polynomials of low degree.

In this article, we work on proving the conjecture in case $\E$ is a rational elliptic surface, i.e. $\overline{\Q}$-birational to $\P^2$. This is the class of elliptic surfaces with the simplest geometry, and thus a good starting point. It is also interesting to observe that it leads to results on Del Pezzo surfaces, due to the relation between those classes of surfaces.
In certain cases, we can even prove the stronger property of \emph{unirationality}: a surface $S$ over a field $k$ is unirational if there is a dominant rational map $\P^2\dashrightarrow  \E$ over $k$. Note that if a projective surface $S$ is $\Q$-unirational, one has in particular that the set of its rational points $S(\Q)$ is Zariski-dense in $S$.

Finally, note that in section \ref{inconditionnelles}, we state on which rational elliptic surfaces the results of \cite{Helfgott} unconditionnally imply the variation of the root number.

We prove the following theorem :

\begin{thm}\label{thmprincipal}
Let $\E$ be a rational elliptic surface.
\begin{enumerate}
\item Suppose $\E$ is isotrivial with non-zero $j$-invariant.
\begin{enumerate}
\item Then the set of rational points $\E(\Q)$ is Zariski-dense in $\E$. 
\item Moreover, if the $j$-invariant is not $1728$, the surface $\E$ is $\Q$-unirational.
\end{enumerate}
\item Suppose $\E$ has a fiber of type $II^*$, $III^*$, $IV^*$ or $I^*_m$ ($m\geq0$). Then $\E$ is $\Q$-unirational.
\end{enumerate}
\end{thm}

When $\E$ is isotrivial with $j(T)=0$, this theorem says nothing on the Zariski-density of the rational points. Such surfaces are given by a Weierstrass equation of the form $y^2=x^3+g(T)$, where $g(T)\in\Z[T]$ is a polynomial of degree at most 6. According to V\'arilly-Alvarado \cite{VA}, the root number of the fibers of one of these surfaces always takes infinitely many negative values, except possibly when all $P_i$ irreducible factors of $g$ are such that
$$\mu_3\subseteq\Q[t]/P_i(t).$$
However, it may happen that the polynomial $g$ does not respect this condition, in particular $g(T)=AT^6+B$, where $A,B\in\Z$ are such that $\frac{3A}{B}$ is a rational square. 
Theorem \ref{conditions0}, gives precise conditions on the integers $A$ and $B$ for the surface $$\E_{A,B,C} : y^2=x^3+AT^6+B$$ to have a constant root number of the fibers always $+1$. This gives a description of surfaces for which our methods does not allow to prove Conjecture \ref{maconjecture}.

\begin{comment}
In the case $g$ is squarefree, the contraction of the neutral section defines a del Pezzo surface of degree 1. 
\end{comment}

We also study in Section \ref{IRES1728root} the variation of the root number on a specific family of rational elliptic surfaces with $j(T)=1728$. It is interesting to note that in the case we study the proof of Theorem \ref{thmprincipal} gives a section of finite order. In
Theorem \ref{conditions1728}, give the conditions under which there can be a constant root number on the fibers of an elliptic surface $\mathscr{F}_{A,B,C}$ given by the following Weierstrass equation:
$$\mathscr{F}_{A,B,C}:y^2=x^3+C(A^2T^4+B^2)x\text{, }A,B,C\in\Z\text{ and }A,B\text{ coprime.}$$

\subsection{Previous results}

Various geometric arguments allow one to prove unconditionnally the density of rational points on rational elliptic surfaces (or its associated del Pezzo surface). 

Rohrlich \cite[Theorem 3]{Rohr} shows the Zariski-density on $$f(t)y^2=x^2+ax+b$$ where $f$ is a quadratic polynomial and $a,b$ are non-zero integer, under the additional assumption that there exists a fiber of positive rank. 

In \cite{SalgRational}, Salgado studies the problem of comparing the rank of the special fibers over a number field $k$ with that of the generic fiber over $k(\P^1)$. She proves for a large class of rational elliptic surfaces the existence of infinitely many fibers whose rank exceeds the generic rank of at least 2.

In \cite{Ulas} and \cite{Ulas2}, Ulas proves the density of rational points on certain families of isotrivial rational elliptic surfaces with $j$-invariant 0 and 1728 by constructing a multisection with infinitely many points on those families. Jabara generalizes one of Ulas' work in \cite[Theorems C and D]{Jabara} and proves the density when $B(t,1)$ is monic and the pair of coefficients $(A,B)$ is sufficiently general.
An article of Salgado and Van Luijk \cite{SVL} improves Ulas construction, and proves the Zariski-density of set of rational points of a del Pezzo surface of degree 1 satisfying certain conditions. For instance it suffices to suppose that the elliptic surface obtained by the blowup of the anticanonical point has a fiber of type $II$ over a certain $k$-rational point of $\P^1$. 

The approach of Bettin, David and Delaunay \cite{biasedfamily} is another way to find whether or not a rational elliptic surface has a section over $\Q$. They study specifically the elliptic surfaces given by a Weierstrass equation $y^2=x^3+a_2(T)x^2+a_4(T)x+a_6(T)$ where $a_2,a_4,a_6\in\mathbb{Z}[X]$ wih no place of multiplicative reduction except possibly at infinity. They find different classes of such families such that $\deg a_i(t)\leq2$ for $i=2,4,6$ and on each of them compute the generic rank. In particular, this proves the density of rational points on those of them with a non-zero generic rank.

Rohrlich pioneered the study of variations of root numbers on algebraic families of elliptic curves in \cite{Rohr}. Many authors followed: see, for example, \cite{Manduchi, GM, Rizz, HCC, Helfgott, VA}.
Some authors (notably \cite{CS},\cite{VA}) remarked that it can happen that the root number of the fibers might all be of the same value, when the elliptic surface considered is isotrivial, i.e. its modular invariant $j(E)$ has no $T$-dependence.

\subsection{Outline of the paper}
In Section \ref{RES}, we give a few reminders on rational elliptic surfaces and del Pezzo surfaces. 
In Section \ref{rootnumber}, we recall the definition and the properties of the root number.

In Section \ref{IRESgeneral}, we prove the unirationality of rational elliptic surfaces with $j$-invariant different from $0$ or $1728$ (the second point of Theorem \ref{thmprincipal}). 
In Section \ref{IRES1728}, we exhibit a section on a rational elliptic surface with $j$-invariant equal to $1728$ and from this deduce the density of its rational points. This section is not always of infinite order, but its existence completes the proof of the statement on isotrivial rational elliptic surfaces of Theorem \ref{thmprincipal}.

In Section \ref{IRES0root}, we find conditions on the coefficients of an rational elliptic surface with zero $j$-invariant give by the equation $y^2=x^3+AT^6+B$ (with $A,B\in\Z$) so that the root number of the fibers always takes the value $+1$. In Section \ref{IRES1728root}, we find conditions on the coefficients of some rational elliptic surfaces with $j$-invariant $1728$ given by the equation $y^2=x^3+xC(A^2T^4+B^2)$ (where $A,B,C\in\Z$) so that the root number of the fibers always takes the value $+1$.

We end the article in Section \ref{noniso} with the completion of the proof of Theorem \ref{thmprincipal}. We also give various small results on non-isotrivial elliptic surfaces. 

\subsection{Acknowledgements}

I thank my supervisor, M. Hindry, for numerous helpful conversations and suggestions and for his encouragement. I thank D. Rohrlich and J.-M. Couveignes for their careful reading of earlier versions of this work. I also thank the anonymous referee for good suggestions.

Most of the mathematics of this paper were done at Institut de Math\'ematiques de Jussieu - PRG. I thank the Institut Fourier of Universit\'e Grenoble Alpes and the Max Planck Institute in Bonn for providing good working environment.

\section{Rational elliptic surfaces}\label{RES}

Let $\E\rightarrow\P^1_\Q$ be an elliptic surface over $\Q$ given by a minimal Weierstrass equation
\begin{displaymath}
\E:y^2=x^3+A(T)x+B(T),
\end{displaymath}
where $A,B\in\Z[T]$. The discriminant is the homogeneous polynomial defined as $$\Delta_\E(X,Y)=Y^{12k-\deg \Delta}\Delta(X/Y)$$ where $\Delta(T)=4A(T)^3+27B(T)^2$, and $k$ is the smallest integer such that $12k\geq\deg\Delta(T)$. Note that one has thus $\deg \Delta_\E=12k$.

\begin{propo} (Criteria of rationality \cite{Miranda})

An elliptic surface is rational, if and only if
$$0<\max\{3\deg A,2\deg B\}\leq12$$
\end{propo}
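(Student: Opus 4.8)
The plan is to pass to the relatively minimal smooth model of $\E$, encode it by a single integer $n=\chi(\O_\E)\ge 1$, identify $n=1$ with rationality via the canonical bundle formula and Castelnuovo's criterion, and finally translate the condition $n=1$ into the stated degree bounds by looking at the behaviour of the Weierstrass data at $T=\infty$.

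First I would normalize the equation. The substitutions $(x,y)\mapsto(u^2x,u^3y)$ with $u\in\overline{\Q}(T)^\times$ send $A\mapsto u^4A$, $B\mapsto u^6B$, and allow one to assume the model is \emph{minimal}: at no place $P$ of $\P^1$ (finite or $T=\infty$) do both $v_P(A)\ge 4$ and $v_P(B)\ge 6$ hold. This reduction is necessary, since without it the degrees of $A,B$ are not a birational invariant. Homogenizing the coordinate of $\P^1$, a minimal model is the same datum as an integer $n\ge 1$ together with $\mathcal A\in H^0(\P^1,\O(4n))$, $\mathcal B\in H^0(\P^1,\O(6n))$, and $\Delta=4\mathcal A^3+27\mathcal B^2\ne 0$ in $H^0(\P^1,\O(12n))$, where $n=\chi(\O_\E)$ for the associated smooth relatively minimal surface. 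I would then invoke the canonical bundle formula $K_\E=\pi^*\O_{\P^1}(n-2)$ (see \cite{Miranda}); it gives $q(\E)=g(\P^1)=0$ and $P_m(\E)=h^0(\P^1,\O(m(n-2)))=\max\{0,\,m(n-2)+1\}$. Hence $p_g=P_1=0$ and $P_2=0$ exactly when $n=1$, whereas $n=2$ gives $p_g=1$ (a K3 surface) and $n\ge 3$ gives $\kappa(\E)=1$. By Castelnuovo's rationality criterion ($q=P_2=0$), $\E$ is rational if and only if $n=1$; geometrically, $n=1$ is precisely the case where $\E$ is the blow-up of $\P^2$ along the base locus of a pencil of plane cubics.

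It remains to show $n=1\iff 0<\max\{3\deg A,2\deg B\}\le 12$, where $\deg$ now means the degree of the affine polynomials in $T$. Dehomogenizing, the order of vanishing of $\mathcal A$ at $T=\infty$ is $4n-\deg A$ and that of $\mathcal B$ is $6n-\deg B$; in particular $\deg A\le 4n$ and $\deg B\le 6n$. For the ``only if'' direction, $\E$ rational gives $n=1$, hence $\deg A\le 4$, $\deg B\le 6$, so $\max\{3\deg A,2\deg B\}\le 12$; this maximum is positive since otherwise $A,B$ are both constant, $\Delta$ is constant, $\E\simeq E_0\times\P^1$ dominates the genus-one curve $E_0$, and $\E$ is not rational. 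For the ``if'' direction, $0<\max\{3\deg A,2\deg B\}\le 12$ gives $\deg A\le 4$ and $\deg B\le 6$, so for any $n\ge 2$ one would get $4n-\deg A\ge 4$ and $6n-\deg B\ge 6$, contradicting minimality at $\infty$; hence $n=1$ and $\E$ is rational by the previous paragraph. (With $n=1$, minimality at $\infty$ holds exactly because $A,B$ are not both constant, which is what the strict lower bound records.)

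The main obstacle is not conceptual but the careful bookkeeping at $T=\infty$: one must separate minimality at the finite places from global minimality over $\P^1$, and observe that it is the order of vanishing at infinity — equivalently the precise degrees of $A$ and $B$ — that pins down $n$, and hence distinguishes the rational case from the K3 ($n=2$) and properly elliptic ($n\ge 3$) cases. Everything else amounts to quoting the canonical bundle formula and Castelnuovo's criterion, which I would take as known.
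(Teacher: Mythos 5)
The paper gives no proof of this proposition — it is a direct citation of \cite{Miranda} — so there is nothing in the paper to compare against; your argument reproduces the standard proof found in that reference, via the Kodaira--N\'eron classification, the canonical bundle formula $K_\E=\pi^*\O_{\P^1}(n-2)$, and Castelnuovo's rationality criterion, with the bookkeeping at $T=\infty$ done correctly. It is correct, and your explicit insistence on first passing to a \emph{globally} minimal Weierstrass model is the right thing to flag, since without that hypothesis the stated degree bounds are not a birational invariant and the equivalence fails (e.g.\ $A=T^4$, $B=T^6$).
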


We observe thus that the discriminant $\Delta_\E$ actually gives the following classification of elliptic surfaces:
$$\deg \Delta_\E= \begin{cases}
0 & \E\text{ is trivial}\\
12 & \E\text{ is rational}\\
24 & \E\text{ is a $K3$-surface}\\
... & \text{...}
\end{cases}
$$

Rational elliptic surfaces are the (non-trivial) elliptic surfaces with discriminant of lowest degree, and studying the density on them is a first step towards the resolution of Conjecture \ref{maconjecture}.

\subsection{Minimal model of a rational elliptic surface}\label{MM}
 The following theorem due to Iskovskikh links rational elliptic surfaces to Del Pezzo surfaces.

\begin{thm}\label{Isk} \cite[Thm. 1]{Isk}

Let $\E$ be a rational elliptic surface defined over $\Q$.

Then, it has a minimal model $X\slash\Q$ that is :
\begin{enumerate}
\item either a conic bundle of degree $\geq1$,
\item or a Del Pezzo surface.
\end{enumerate}
\end{thm}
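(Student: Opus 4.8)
The plan is to run the minimal model program over $\Q$ and then identify the resulting surface via the classification of $\Q$-minimal geometrically rational surfaces. First I would record two preliminary facts about a rational elliptic surface $\E$. By the rationality criterion above, $\overline{\E}:=\E\times_\Q\overline{\Q}$ is birational to $\P^2_{\overline{\Q}}$ — in fact it is $\P^2_{\overline{\Q}}$ blown up at nine points, so $\mathrm{rk}\,\mathrm{Pic}(\overline{\E})=10$ and $K_{\E}^2=0$ — hence $\E$ is geometrically rational over $\Q$. And $\E$ is never relatively $\Q$-minimal: the zero section $\sigma(\P^1)$ is a smooth rational curve defined over $\Q$, and since the canonical bundle formula gives $-K_{\E}\sim F$ for a fibre $F$, adjunction yields $\sigma(\P^1)^2=-2-\sigma(\P^1)\cdot K_{\E}=-2+\sigma(\P^1)\cdot F=-1$, so it is a $(-1)$-curve that can be blown down over $\Q$.

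Next I would descend the MMP. Starting from $\E$, as long as the current smooth projective surface $S/\Q$ is not relatively $\Q$-minimal there is a $\mathrm{Gal}(\overline{\Q}/\Q)$-stable set of pairwise disjoint $(-1)$-curves on $\overline{S}$; by Castelnuovo its contraction descends to a birational morphism $S\to S'$ over $\Q$ onto a smooth projective, again geometrically rational, surface, and $\mathrm{rk}\,\mathrm{Pic}$ strictly drops. The process therefore terminates at a relatively $\Q$-minimal surface $X/\Q$ birational over $\Q$ to $\E$, and $K_X^2\geq 1$ since at least one contraction was performed and each contraction of an orbit of $m$ disjoint $(-1)$-curves raises $K^2$ by $m$. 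To identify $X$ I would pass to $\overline{\Q}$, where a minimal rational surface is $\P^2$ or a Hirzebruch surface $\mathbb{F}_n$ with $n\neq 1$; analysing the $\mathrm{Gal}$-action on $\mathrm{Pic}(\overline{X})$ and using $\Q$-minimality to forbid any contractible invariant configuration forces $\mathrm{rk}\,\mathrm{Pic}(\overline{X})^{\mathrm{Gal}}\in\{1,2\}$. In the rank-$1$ case the invariant Picard group is generated by an ample class, necessarily a negative multiple of $K_X$, so $-K_X$ is ample and $X$ is a del Pezzo surface — this absorbs the Severi--Brauer/$\P^2$ and quadric cases, which are del Pezzo of degrees $9$ and $8$. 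In the rank-$2$ case the two extremal rays of the invariant effective cone produce a morphism $X\to B$ onto a genus-zero curve whose geometric fibres are conics, i.e.\ a conic bundle; tracking $K_X^2$ (which starts at $0$ and only grows under the contractions) then pins down its degree. This structural input is exactly Iskovskikh's theorem \cite[Thm.~1]{Isk}, which one may also simply invoke wholesale.

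The step I expect to be the real obstacle is the bookkeeping over the non-closed field in the descent: one must verify that at every stage the $(-1)$-curves to be removed genuinely occur in pairwise disjoint $\mathrm{Gal}$-orbits — this is the delicate point, for instance for $(-1)$-curves lying inside reducible fibres of the elliptic pencil — so that Castelnuovo's contraction is defined over $\Q$ and termination is guaranteed; a secondary point is the precise normalisation of the degree of the conic bundle quoted in the statement. Everything else is the classical structure theory recalled above.
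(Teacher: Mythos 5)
The paper gives no proof of this theorem: the statement is lifted verbatim as a citation of Iskovskikh \cite[Thm.~1]{Isk}, so there is nothing in the paper to compare your argument against. Your sketch does trace the standard route one would take (the zero section is a $(-1)$-curve over $\Q$ since $\sigma\cdot F=1$, so $\E$ is never relatively $\Q$-minimal; running the $\Q$-MMP terminates at a surface $X$ with $K_X^2\geq 1$; then one appeals to the classification of minimal geometrically rational surfaces as del Pezzo surfaces of rank one or conic bundles of rank two), and you are candid that you are ultimately invoking Iskovskikh's classification ``wholesale.'' That is logically fine as an exposition of why the result holds, but it is not an independent proof.

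The genuine gap, beyond the Galois-descent bookkeeping you already flag, is the part of the statement you pass over with ``tracking $K_X^2\ldots$ then pins down its degree'': nothing in your argument forces the conic-bundle case to have degree exactly $1$. A priori the $\Q$-MMP could stop at a conic bundle with $K_X^2$ anywhere between $1$ and $8$, and ruling out the intermediate values is the real content of Iskovskikh's theorem as it applies to rational elliptic surfaces (it involves analysing which Galois-stable configurations of $(-1)$-curves on the weak del Pezzo surface of degree $1$ obtained by contracting the zero section can actually be blown down when the surface is \emph{not} already minimal, and what $\mathrm{Pic}(\overline X)^{\mathrm{Gal}}$ then looks like). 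Related to this, you should also be careful that contracting the zero section generally yields only a \emph{weak} del Pezzo surface of degree $1$ (cf.\ Lemma~\ref{thmDP1}: one gets a genuine del Pezzo surface of degree $1$ only when every singular fibre is of type $I_1$ or $II$), and a weak del Pezzo is not covered by the dichotomy as stated, so additional contraction or case analysis is needed there. As it stands, your proposal is a correct high-level sketch of the framework behind the cited theorem, but not a proof of the precise statement.
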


A \emph{del Pezzo surface} $X$ is a non-singular projective algebraic surface whose anticanonical divisor is ample. 
Its \emph{degree} is the integer $d\in\{1,\dots,9\}$ corresponding to the self-intersection number $(K_X,K_X)$ of the canonical divisor of $X$. 

%We will be interested in the following questions. For $\E$ an rational elliptic surface with $X$ its minimal model :
%\begin{enumerate}
%\item Does there exist a rational point on $X$ ?
%\item Is $\E(\Q)$, the set of rational points, dense for Zariski-topology ?
%\end{enumerate}

When $X$ is a conic bundle, the work of Kollar and Mella \cite{KollMell} guarantees that the surface is $\Q$-unirational, i.e. it is dominated by the projective plane $\P^2\dashrightarrow X$. In particular, the set of rational points is dense. 

Suppose that $X$ is a del Pezzo surface of degree $d$.
When $d\geq3$, one knows by the work of Segre and Manin \cite{Manin} that the existence of one rational point on $X$ implies that the surface is $\Q$-unirational.
When $d=2$,  Salgado, Testa and Várilly-Alvarado \cite{STVA}, based on a work of Manin \cite[Thm 29.4]{Manin}, showed that if $X$ contains a rational point that does not lie on an exceptional curve nor a certain quartic, then $X(\Q)$ is Zariski-dense.
If $d=1$, the surface $X$ has automatically a rational point: the base point of the anticanonical system. However, the results concerning density of rational points are still partial 
(for instance \cite{SVL} and \cite{VA}).

\subsection{Del Pezzo surfaces of degree one}\label{DP1}
If we blow up the anticanonical point on $X$, a del Pezzo surface of degree 1, we obtain a rational elliptic surface $\E$ such that the image of the neutral section is the exceptional divisor.
Thus, the rational points of $X$ are dense if and only if the rational points of $\E$ are dense.

By studying the singular points on rational elliptic surfaces, we obtain the following lemma:

\begin{lm} \label{thmDP1}
Let $\E$ be a minimal rational elliptic surface. We denote by $X$ the surface obtained from $\E$ by contracting its section at infinity.
Then $X$ is a del Pezzo surface of degree 1 if and only if the only singular fibers of $\E$ have type $II$ or $I_1$.

\end{lm}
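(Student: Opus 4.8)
The statement to prove is a characterization: $X$ (obtained from $\E$ by contracting the section at infinity) is a del Pezzo surface of degree $1$ if and only if the only singular fibers of $\E$ are of type $II$ or $I_1$. By Theorem~\ref{Isk}, $X$ is a del Pezzo surface or a conic bundle; the minimal Weierstrass model with $\deg A = 4$, $\deg B = 6$ already pins down the surface numerically, so what I really need to understand is when $X$ fails to be del Pezzo, and how this failure is detected on the fibration $\E \to \P^1_\Q$. The natural invariant to track is the configuration of reducible (equivalently, non-multiplicative non-irreducible) singular fibers, via the Shioda--Tate formula: for a rational elliptic surface the Picard number is $10$, and $10 = 2 + r + \sum_v (m_v - 1)$ where $r$ is the Mordell--Weil rank and $m_v$ the number of components of the fiber over $v$. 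Del Pezzo of degree $1$ corresponds to the case where the lattice $\sum_v (m_v - 1) = 0$, i.e. \emph{all} singular fibers are irreducible; the only Kodaira fiber types that are irreducible are $I_1$ (nodal) and $II$ (cuspidal). So the content of the lemma is: contracting the zero section of $\E$ yields a del Pezzo surface of degree $1$ exactly when every singular fiber is irreducible.

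\textbf{Step 1: from the Weierstrass model to the anticanonical geometry.} First I would recall the standard construction: blowing up the base point of the anticanonical system $|-K_X|$ on a del Pezzo surface of degree $1$ produces a rational elliptic surface $\E$ with the exceptional curve as zero section, and conversely a rational elliptic surface whose zero section has self-intersection $-1$ contracts to a (possibly singular) anticanonical model $X$; the degree of the canonical class is always $1$ in this case. The point is that $X$ is a \emph{smooth} del Pezzo surface if and only if the contraction of $\E$ introduces no singularities, which happens if and only if $\E$ contains no $(-2)$-curves, i.e. no reducible fibers (a reducible fiber of additive type contains a configuration of $(-2)$-curves that get contracted to a rational double point on $X$, ruining ampleness of $-K_X$). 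This is where I would invoke the relationship between the singular-fiber lattice $\bigoplus_v T_v$ (the $A$-$D$-$E$ root lattices attached to reducible fibers) and the singularities of the anticanonical model.

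\textbf{Step 2: translating "reducible fiber" into the Weierstrass data and identifying $I_1$, $II$.} For the minimal Weierstrass equation $y^2 = x^3 + A(t)x + B(t)$ with $\Delta(t) = 4A^3 + 27B^2$ of degree $12$, Tate's algorithm (or simply Kodaira's classification) says: a fiber over $t_0$ is $I_1$ when $t_0$ is a simple root of $\Delta$ with $A(t_0) \neq 0$; it is $II$ when $\mathrm{ord}_{t_0}\Delta = 2$ and $\mathrm{ord}_{t_0}A \geq 1$, $\mathrm{ord}_{t_0}B = 1$; and in every other case ($I_m$ with $m\geq 2$, $I_m^*$, $III$, $IV$, $III^*$, $IV^*$, $II^*$) the fiber is reducible. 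So "only singular fibers of type $II$ or $I_1$" is precisely "no reducible fibers", and by the Shioda--Tate count this is equivalent to $\sum_v(m_v - 1) = 0$. I would state this equivalence cleanly and then combine it with Step~1: no reducible fibers $\iff$ no $(-2)$-curves on $\E$ $\iff$ the contraction $X$ is smooth $\iff$ $X$ is a del Pezzo surface (the degree being automatically $1$ from the construction).

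\textbf{Main obstacle.} The delicate part is making rigorous the claim in Step~1 that \emph{reducible fiber} $\Leftrightarrow$ \emph{$X$ acquires a singularity}, over $\Q$ and not merely over $\overline{\Q}$ — in particular checking that the ampleness of $-K_X$ genuinely fails as soon as one $(-2)$-curve is contracted, and that conversely the anticanonical model of a rational elliptic surface with only irreducible fibers is smooth with $-K_X$ ample. I expect to handle this by the intersection-theoretic criterion: $-K_\E$ is the class of a fiber, so on the contraction $X$ one has $(-K_X)^2 = 1 > 0$, and $-K_X$ is ample iff it meets every curve positively; a contracted configuration of fiber components would produce an effective curve $C$ with $(-K_X \cdot C) = 0$, contradicting ampleness — whereas if all fibers are irreducible the only curves with $(-K_\E \cdot C) = 0$ are the fiber components themselves (which remain), and one checks $-K_X \cdot C > 0$ for everything else using the Nakai--Moishezon criterion. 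I would also need the (standard, rationality-insensitive) fact that $\E$ minimal with $\deg A = 4, \deg B = 6$ forces the zero section to be a $(-1)$-curve, so that the contraction is well-defined and lands on a degree-$1$ surface.
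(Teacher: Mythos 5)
Your proposal is correct and takes essentially the same approach as the paper: the paper's (very terse) proof likewise reduces the lemma to the observation that a del Pezzo surface is smooth, so that the contraction of the zero section must not introduce singularities, which forces every singular fiber of $\E$ to be irreducible, and the irreducible Kodaira types are exactly $I_1$ and $II$. Your version merely fills in the details (Shioda--Tate, the $(-2)$-curve lattice, the Nakai--Moishezon check) that the paper leaves implicit.
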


\begin{proof}
A del Pezzo surface is smooth by definition. Therefore, the blow-up of its base point also gives a smooth elliptic surface, meaning that the only singular fibers are irreducible (in other words, those fibers have type $I_1$ or $II$). 
\end{proof}

\subsection{Isotrivial rational elliptic surfaces}\label{IRES}

An isotrivial rational elliptic surface takes one of the following forms:
\begin{enumerate}
\item $y^2=x^3+aH(u,v)^2x+bH(u,v)^3$ where $a,b\in\Q^*$ are such that $4a^3+27b^2\not=0$ (if $j\in\Q\backslash\{0,1728\}$);
\item $y^2=x^3+A(u,v)x$ (if $j=0$);
\item $y^2=x^3+B(u,v)$ (if $j=1728$),
\end{enumerate}
for polynomial $A,B,H\in\Z[u,v]$ such that $\deg H\leq2$, $\deg A\leq4$ and $\deg B\leq6$.
%When the surface is rational, we have additionally the following:
%\begin{enumerate}
%\item ; 
%\item ;
%\item .
%\end{enumerate}
To avoid the case where the surface is trivial, we suppose also that $H$ is not a square, $A$ is not a $4$th-power and $B$ is not a $6$th-power.

In each cases, the singular fibers have the following configuration:

\begin{enumerate}
\item Every singular fiber has type $I_0^*$.
\item The singular fibers have either type $I_0^*$, $III$ or $III^*$.
\item The singular fibers have either type $I_0^*$, $II$, $II^*$, $IV$ or $IV^*$.
\end{enumerate}

The only case where an isotrivial rational elliptic surface %(we will shorten this appellation in writting simply \emph{IRES}) 
has a del Pezzo surface of degree 1 as a minimal model is the third one, when moreover the polynomial $B$ is squarefree and has degree $\geq5$. 

\section{Root number}\label{rootnumber}

\subsection{Definition and motivation}
The root number of an elliptic curve $E$ is expressed as the product of the local factors $$W(E)=\prod_{p\leq\infty}{W_p(E)},$$
where $p$ runs through the finite and infinite places of $\Q$, $W_p(E)\in\{\pm1\}$ and $W_p(E)=+1$ for all $p$ except a finite number of them. The \emph{local root number of $E$ in $p$}, $W_p(E)$, is defined in terms of the epsilon factors of the Weil-Deligne representations of $\Q_p$ (see \cite{Del} and \cite{Tat}). Rohrlich \cite{Rohr} gives an explicit formula for the local root numbers in terms of the reduction of the elliptic curve $E$ at a prime $p\not=2,3$ and at $p=2,3$ in case $E$ is semi-stable. Halberstadt \cite{Halb} gives tables (completed by Rizzo \cite{Rizz}) for the local root number at $p=2,3$ according to the coefficients of $E$. Moreover we always have $W_\infty(E)=-1$.

The root number is hypothetically equal to the sign $W(E)\in\{\pm1\}$ of the conjectural functional equation of $L(E,s)$ the $L$-function of $E$ :
\begin{displaymath}
\mathscr{N}_E^{(2-s)/2}(2\pi)^{s-2}\Gamma(2-s)L(E,2-s)=W(E)\mathscr{N}_E^{s/2}(2\pi)^{-s}\Gamma(s)L(E,s).
\end{displaymath}

When we work on elliptic curves over $\Q$, such a functional equation always exists (by Wiles' work \cite{Wiles} and its generalisation by Breuil, Conrad, Diamond and Taylor \cite{bcdt-fermat}) and the values of the root number and the sign of the functional equation are indeed the same.

The Birch and Swinnerton-Dyer conjecture implies that the root number is related to the rank of the elliptic curve:

\begin{conjecture}[Parity Conjecture]
For all elliptic curve $E$ over $\Q$, we have
$$W(E)=(-1)^{\mathrm{rank} \ E(\Q)}.$$
\end{conjecture}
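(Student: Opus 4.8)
The statement as written is an open conjecture; the only proof available at present is the classical derivation of the parity statement from the Birch and Swinnerton-Dyer conjecture, i.e.\ the implication announced in the sentence preceding it. The plan is to isolate the single unconditional ingredient — a formula for $W(E)$ in terms of the order of vanishing of $L(E,s)$ at the central point — and then feed in BSD.

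First I would record the unconditional input. By modularity of elliptic curves over $\Q$ (Wiles, Taylor--Wiles, Breuil--Conrad--Diamond), $L(E,s)$ is entire and the completed $L$-function $\Lambda(E,s):=\mathscr{N}_E^{s/2}(2\pi)^{-s}\Gamma(s)L(E,s)$ satisfies the functional equation displayed above, namely $\Lambda(E,2-s)=W(E)\Lambda(E,s)$, in which the sign is exactly the root number $W(E)$ assembled from the local $\varepsilon$-factors (local--global compatibility of $\varepsilon$-factors, Deligne). Putting $s=1-u$ gives $\Lambda(E,1+u)=W(E)\Lambda(E,1-u)$; writing $k=\mathrm{ord}_{s=1}\Lambda(E,s)$ and $\Lambda(E,1+u)=c\,u^{k}+O(u^{k+1})$ with $c\neq 0$, comparison of leading Taylor coefficients of the two sides forces $W(E)=(-1)^{k}$. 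Since $\Gamma(s)$, $(2\pi)^{-s}$ and $\mathscr{N}_E^{s/2}$ are holomorphic and nonvanishing at $s=1$, one has $k=\mathrm{ord}_{s=1}L(E,s)$. This yields, unconditionally, $W(E)=(-1)^{\mathrm{ord}_{s=1}L(E,s)}$.

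Then I would invoke the rank part of the Birch and Swinnerton-Dyer conjecture, $\mathrm{ord}_{s=1}L(E,s)=\mathrm{rank}\,E(\Q)$, and combine it with the previous identity to obtain $W(E)=(-1)^{\mathrm{rank}\,E(\Q)}$, which is the claim.

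The hard part is precisely this last step: BSD is open, and there is no known route to the stated equality of parities that avoids it. Unconditionally one has the equality of analytic and algebraic rank only when the analytic rank is at most $1$ (Gross--Zagier together with Kolyvagin), which already proves the conjecture in those cases; and one has, for every prime $p$, the $p$-parity refinement $W(E)=(-1)^{\mathrm{corank}_{\Z_p}\mathrm{Sel}_{p^{\infty}}(E/\Q)}$ (Nekov\'{a}\v{r}; T.\ and V.\ Dokchitser), from which the full statement would follow were the $p$-primary part of $\Sha(E/\Q)$ known to be finite — but it is not. So the realistic target of the argument is the conditional implication $\text{BSD}\Rightarrow\text{parity}$, whose only nontrivial step is the unconditional formula $W(E)=(-1)^{\mathrm{ord}_{s=1}L(E,s)}$ proved in the second paragraph.
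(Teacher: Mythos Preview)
Your proposal is appropriate: the paper does not prove this statement at all --- it is stated as a conjecture, preceded only by the remark that it is implied by the Birch and Swinnerton-Dyer conjecture. Your write-up makes this implication explicit (functional equation $\Rightarrow W(E)=(-1)^{\mathrm{ord}_{s=1}L(E,s)}$, then BSD $\Rightarrow$ parity) and correctly flags that nothing more is available unconditionally, which is exactly the status the paper assigns to it.
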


As a consequence of this equality, it suffices that $W(E)=-1$ for the rank of $E(\Q)$ not to be zero and in particular for $E(\Q)$ to be infinite.

Let $\E$ be a rational elliptic surface over $\P^1_\Q$. The elliptic surface can be seen as a family of elliptic curves, and admits a Weierstrass equation of the form

$$\mathscr{E}:y^2=x^3+A(T)x+B(T),$$ with $A(T), B(T)\in\Z[T]$ have respectively degree less than or equal to 4 and 6. 

We denote by $\Delta(T)=4A(T)^3-27B(T)^2$ the discriminant and the corresponding homogenous polynomial $$\Delta_\E(X,Y)=Y^{12-\deg \Delta}\Delta(X/Y).$$ Let also $M_\E(X,Y)$ the product of the polynomials associated to the places of multiplicative reduction, that is to say, polynomials dividing $\Delta_\E$, but not $Y^{4-\deg A}A(X/Y)$.

We consider the sets $W_+$ and $W_-$ given by $$W_\pm(\E)=\{t\in\Q: \E_t\text{ is an elliptic curve and }W(\E_t)=\pm1\}.$$

As a consequence of the parity conjecture, if $\#W_-(\E)=\infty$, then there exist infinitely many fibers of $\E$ that are non singular elliptic curves with positive rank, and this guarantees the density of the rational points on $\E$.

When the surface is isotrivial, it can happen that one of the set $W_-$ or $W_+$ is finite or empty. 
In \cite{CS}, Cassels and Schinzel find a family of elliptic curves, such that $j(T)=1728$, on which the sign of the fibers always takes the value $-1$: $$\E_T:y^2=x^3-(7+7T^4)^2x.$$

Varilly-Alvarado gives more examples of elliptic surfaces with constant root number in \cite{VA}, among them the following elliptic surface with $j=0$, given by the Weierstrass equation
$$y^2=x^3+6(27T^6+1),$$
whose fibers always have a root number of value $+1$.

\subsection{Local root number at 2 and 3 of $y^2=x^3+\alpha x$ and $y^2=x^3+\alpha$}

We give here some formulas for the local root number at 2 and 3 of the elliptic curves $y^2=x^3+\alpha x$ and $y^2=x^3+\alpha$ for $\alpha\in\Q$. 

\begin{lm}\cite[Lemme 4.7]{VA}\label{VAx}

Let $t$ be a non-zero integer and let be the elliptic curve $E_t:y^2=x^3+tx$. We denote by $W_2(t)$ and $W_3(t)$ its local root numbers at 2 and 3. Put $t_2$ and $t_3$ the integers such that $t=2^{v_2(t)}t_2=3^{v_3(t)}t_3$. Then
\begin{displaymath}
W_2(t)= 
\begin{cases}
 -1 & \text{if }v_2(t)\equiv1\text{ or } 3\mod4 \text{ and }t_2\equiv1\text{ or }3\mod8; \\
  & \text{or if }v_2(t)\equiv0\mod4 \text{ and } t_2\equiv1,5,9,11,13\text{ or }15\mod16;\\
   & \text{or if }v_2(t)\equiv2\mod4 \text{ and }t_2\equiv1,3,5,7,11,\text{ or }15\mod16;\\
 +1 & \text{otherwise.}
\end{cases}
\end{displaymath}

\begin{displaymath}
W_3(t)=
\begin{cases}
 -1 & \text{if }v_3(t)\equiv2\mod4; \\
 +1 & \text{otherwise.}
\end{cases}
\end{displaymath}
\end{lm}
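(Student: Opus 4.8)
The plan is to exploit the fact that $E_t : y^2 = x^3 + tx$ has $j$-invariant $1728$ and complex multiplication by $\Z[i]$, so in particular it has potentially good reduction at every prime. The key reduction is that the change of variables $x\mapsto u^2x$, $y\mapsto u^3y$ yields an isomorphism $E_t\simeq E_{tu^{-4}}$ defined over any field containing $u$; since the local root number at $p$ is an isomorphism invariant of $E_t$ over $\Q_p$, the map $t\mapsto W_p(E_t)$ factors through $\Q_p^\times/(\Q_p^\times)^4$. I would then pin down these quotients: for $p=3$ one has $\Q_3^\times/(\Q_3^\times)^4\simeq\Z/4\Z\times\Z/2\Z$ (the valuation mod $4$, together with $\Z_3^\times/(\Z_3^\times)^4\simeq\Z/2\Z$), and for $p=2$ one has $\Q_2^\times/(\Q_2^\times)^4\simeq\Z/4\Z\times(\Z/16\Z)^\times$, of order $32$ — which is precisely why the statement needs $v_2(t)\bmod4$ together with $t_2\bmod16$, whereas for $p=3$ only $v_3(t)\bmod4$ survives.

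Next I would carry out the verification over a set of representatives $t=p^aw$ with $a\in\{0,1,2,3\}$ and $w$ running through $\Z_p^\times/(\Z_p^\times)^4$. Since $\Delta(E_t)=-64t^3$: at $p=3$ with $a=0$ one gets $v_3(\Delta)=0$, hence good reduction and $W_3=+1$; for $a\in\{1,2,3\}$ the minimal model has $v_3(c_4)>0$, so the reduction is additive, and because the quartic twist is \emph{tamely} ramified at $3$ the conductor exponent is $2$, so $W_3(E_t)$ is given by a short closed formula (equivalently, read off from the tables of Halberstadt, corrected by Rizzo, or from Rohrlich's formulas in the potentially-good case). At $p=2$ one has $v_2(\Delta)=6+3v_2(t)\geq6$, so \emph{every} representative has additive — and possibly wildly ramified — reduction at $2$; for each of the $32$ classes one applies Tate's algorithm to determine the Kodaira type, conductor exponent and reduced invariants $(c_4,c_6,\Delta)$, and then extracts $W_2$ from the $2$-adic table of Halberstadt--Rizzo. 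Assembling the $8+32$ outcomes produces the stated case distinctions.

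The genuine obstacle is the $p=2$ computation: wildly ramified additive reduction at $2$ makes Tate's algorithm intricate, and reading local root numbers off Halberstadt's tables is notoriously error-prone — this is exactly the kind of slip that Rizzo's corrections address — so the difficulty lies in carrying out all $32$ cases accurately and exhaustively, not in any conceptual point. It is worth sanity-checking the final formula against global data: $E_1$ is the curve $32a$ and both $E_1$ and $E_{-1}$ have global root number $+1$ with $W_\infty=-1$, forcing $W_2=-1$, consistent with the lines $v_2(t)\equiv0$, $t_2\equiv1,15\bmod16$; and more generally against the known constant-root-number families of $j$-invariant $1728$ such as $y^2=x^3-(7+7T^4)^2x$. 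A conceptually cleaner alternative is to compute the root number from the Hecke character of $\Q(i)$ attached to $E_t$ together with its local Gauss sums, but at $p=2$ this still reduces to the same explicit ramification bookkeeping, so it does not shorten the argument.
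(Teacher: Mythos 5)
The lemma you are asked about is not proved in this paper: it is quoted verbatim from Várilly-Alvarado \cite[Lemme 4.7]{VA}, so there is no internal argument to compare against. That said, your outline is essentially the correct strategy, and it matches what is done in the cited source: use that a quartic twist $x\mapsto u^2x$, $y\mapsto u^3y$ identifies $E_t$ with $E_{tu^{-4}}$ over $\Q_p$, so that $W_p(E_t)$ factors through $\Q_p^\times/(\Q_p^\times)^4$, and then evaluate on explicit coset representatives using Rohrlich's formulas away from $2,3$ and the Halberstadt--Rizzo tables at $p=2,3$. Your group-theoretic bookkeeping is right: $\Z_2^\times/(\Z_2^\times)^4\cong(\Z/16\Z)^\times$ (since a $2$-adic unit is a fourth power iff it is $\equiv1\pmod{16}$), giving $32$ classes at $p=2$, whereas at $p=3$ the unit part is only $\Z/2\Z$, which explains why the final formula at $3$ depends only on $v_3(t)\bmod 4$; and your observation that the $j=1728$ twist is tame at $3$ but wild at $2$ correctly locates where the computation is hard. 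Your sanity check via the conductor-$32$ curves $y^2=x^3\pm x$ (global root number $+1$, $W_\infty=-1$, good reduction away from $2$, forcing $W_2=-1$) is also valid and consistent with the displayed table.

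The one thing to flag is that your text is a plan, not a proof: the substance of the lemma, particularly the $2$-adic table, \emph{is} the exhaustive case analysis over those $32$ classes, and you explicitly defer it. As written, the argument establishes only that $W_2$ and $W_3$ must be functions of $(v_p(t)\bmod4,\,t_p\bmod16)$ and $(v_3(t)\bmod4,\,t_3\bmod3)$ respectively; it does not derive the specific entries. If you intend this to stand as a proof rather than a reference to \cite{VA}, you would need to actually run Tate's algorithm (or minimality normalizations) and the Halberstadt--Rizzo lookups for each representative — at $p=2$ there is no shortcut around that enumeration, precisely because of the wild ramification you identify.
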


\begin{lm}\label{VA} \cite[Lemme 4.1]{VA}

Let $t$ be a non-zero integer and the elliptic curve $E_t:y^2=x^3+t$. We denote by $W_2(t)$ and $W_3(t)$ its local root numbers at 2 and 3. Put $t_2$ and $t_3$ the integers such that $t=2^{v_2(t)}t_2=3^{v_3(t)}t_3$. Then
\begin{displaymath}
W_2(t)= 
\begin{cases}
 -1 & \text{if }v_2(t)\equiv0\text{ or } 2\mod6; \\
  & \text{or if }v_2(t)\equiv1,3,4\text{ or }5\mod6\text{ and }t_2\equiv3\mod4;\\
 +1, & \text{otherwise.}
\end{cases}
\end{displaymath}

\begin{displaymath}
W_3(t)=
\begin{cases}
 -1 & \text{if }v_3(t)\equiv1\text{ or } 2\mod6\text{ and }t_3\equiv1\mod3; \\
  & \text{or if }v_3(t)\equiv4\text{ or }5\mod6\text{ and }t_3\equiv2\mod3;\\
  & \text{or if }v_3(t)\equiv0\mod6\text{ and }t_3\equiv5\text{ or }7\mod9;\\
  & \text{or if }v_3(t)\equiv3\mod6\text{ and }t_3\equiv2\text{ or }4\mod9;\\
 +1, & \text{otherwise.}
\end{cases}
\end{displaymath}
\end{lm}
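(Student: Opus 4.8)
The statement is a purely local computation at the primes $2$ and $3$: for each $p$, the factor $W_p(E_t)$ depends only on the isomorphism class of $E_t$ over $\Q_p$, and the values of $W_2$ and $W_3$ can be read off the tables of Halberstadt, as corrected by Rizzo, in terms of a minimal Weierstrass model at $p$. So the plan is to carry out this lookup at $p=2$ and $p=3$.

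First I would record the standard invariants of the model $y^2=x^3+t$: one has $c_4=0$, $c_6=-2^5 3^3 t$ and $\Delta=-2^4 3^3 t^2$. The essential simplification is that $(x,y)\mapsto(u^2x,u^3y)$ gives $E_t\cong E_{t/u^6}$, so over $\Q_p$ the curve, and hence $W_p(E_t)$, depends on $t$ only through its class in $\Q_p^{\times}/(\Q_p^{\times})^6$; this is the source of the dependence on $v_p(t)\bmod 6$ together with a bounded residue of the prime-to-$p$ part $t_p$.

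Next, for $p=2$ I would run through the six residues of $v_2(t)$ modulo $6$. After clearing sixth powers of $2$ one has $0\leq v_2(t)\leq 5$; in the cases $v_2(t)\equiv 4,5\bmod 6$ the short model is still non-minimal at $2$ (then $v_2(c_4)\geq4$, $v_2(c_6)\geq6$, $v_2(\Delta)\geq12$), so one scales down once more and, the scaled short model being no longer integral, replaces it by a minimal \emph{long} Weierstrass equation over $\Z_2$ produced by a Kraus-type argument — whether this minimal model has good or additive reduction is exactly what introduces the condition on $t_2\bmod 4$. In each case one then reads $W_2$ from Halberstadt's table, using $(v_2(c_4),v_2(c_6),v_2(\Delta))$ and the residues modulo $8$ and $16$ of the unit parts of $c_6$ and $\Delta$; since the unit part of $c_6$ at $2$ is $-27\,t_2$, those conditions collapse, after absorbing the unit $-27$, to the single congruence $t_2\equiv 3\bmod 4$ in the statement. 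The computation at $p=3$ is of the same nature: one runs over $v_3(t)\bmod 6$, passes to a minimal (sometimes long) model at $3$ in each class, and applies the $p=3$ table; here the unit parts of $c_6$ and $\Delta$ carry the units $-2^5$ and $-2^4$, and the table's conditions reduce to those stated on $t_3\bmod 3$ and $t_3\bmod 9$. Assembling the twelve local cases produces the two displayed formulas.

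The main obstacle is bookkeeping rather than conceptual content. One must take care to pass to genuinely minimal Weierstrass models at $2$ and $3$ — after scaling these are not always short, so reducing $t$ modulo sixth powers of $p$ does not by itself give a minimal model — and to track precisely which residue classes of the unit parts of $c_6$ and $\Delta$ (modulo $8,16$ at $2$; modulo $3,9$ at $3$) actually occur, so as to verify that they really do collapse to the clean congruences on $t_2\bmod 4$ and on $t_3\bmod 3,\,9$ recorded in the lemma.
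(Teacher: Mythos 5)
The paper does not prove this lemma: it is cited verbatim from V\'arilly-Alvarado \cite[Lemme 4.1]{VA}, and the proof there is exactly the case-by-case lookup you describe. Your outline is therefore faithful to the source: compute $c_4=0$, $c_6=-2^5 3^3 t$, $\Delta=-2^4 3^3 t^2$, reduce modulo sixth powers of $p$, run over the six residues of $v_p(t)$, and read off $W_p$ from Rizzo's completion of Halberstadt's tables. That is the whole argument, and you correctly identify the bookkeeping as the only difficulty.

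Two of your parenthetical claims, however, are not right and give a misleading picture of the computation. First, the short model $y^2=x^3+t$ is \emph{not} uniformly non-minimal at $2$ when $v_2(t)\equiv 4,5\bmod 6$. The inequalities $v_2(c_4)\geq 4$, $v_2(c_6)\geq 6$, $v_2(\Delta)\geq 12$ are only a necessary condition for non-minimality at $p=2,3$; whether a $u=2$ rescaling can be completed to an integral long equation is governed by Kraus' congruences. For $v_2(t)=5$ the model is minimal (one checks that $64\mid 32 - t'^2 + r^3$ has no admissible solution), while for $v_2(t)=4$ it is minimal or not according to $t_2\bmod 4$: e.g. $y^2=x^3+16$ minimizes to $y^2+y=x^3$, but $y^2=x^3+48$ is already minimal. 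Second, the claim that "whether this minimal model has good or additive reduction is exactly what introduces the condition on $t_2\bmod 4$" cannot be right as stated, because the $t_2\bmod 4$ condition in the lemma appears for every $v_2(t)\equiv 1,3,4,5\bmod 6$, including the cases you say are already minimal. In fact those congruences arise directly from the entries of Rizzo's table (which are phrased in terms of the unit part of $c_6$, equal here to $-27\,t_2$ up to powers of $2$), not specifically from a minimality failure. Neither error would survive an honest execution of the table lookup, but as written the sketch attributes the key congruence to the wrong mechanism; you should drop the claim about minimality at $v_2(t)\equiv 4,5$ and instead say that each of the six valuation classes is fed into the table, whose congruence conditions on the unit part of $c_6$ translate into the conditions on $t_2\bmod 4$ and, at $3$, on $t_3\bmod 3$ and $t_3\bmod 9$.
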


\section{Isotrivial rational elliptic surfaces with $j(T)\not=0,1728$}\label{IRESgeneral}

\subsection{A theorem of Kollar and Mella}

\begin{thm}\label{KollarMella} \cite[Thm. 1]{KollMell}
Let $K$ be any field of characteristic $\not=2$ and $a_0(t),\dots,a_3(t)\in K[t]$ polynomials of degree 2 giving a nontrivial family of elliptic curves. Then the surface
\begin{displaymath}
S:y^2=a_3(t)x^3+a_2(t)x^2+a_1(t)x+a_0(t)\subset\mathbb{A}^3_{xyt}
\end{displaymath}
is unirational over $K$. 
\end{thm}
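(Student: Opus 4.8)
The plan is to reduce Theorem \ref{KollarMella} to a concrete, well-chosen specialization argument combined with the classical fact that a smooth cubic surface (or more generally a cubic surface with a point lying in "general" position) is unirational over the base field. The statement to prove is:

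\begin{thm*}
Let $K$ be any field of characteristic $\not=2$ and $a_0(t),\dots,a_3(t)\in K[t]$ polynomials of degree $2$ giving a nontrivial family of elliptic curves. Then the surface
\begin{displaymath}
S:y^2=a_3(t)x^3+a_2(t)x^2+a_1(t)x+a_0(t)\subset\mathbb{A}^3_{xyt}
\end{displaymath}
is unirational over $K$.
\end{thm*}

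\begin{proof}[Proof plan]
\textbf{Step 1: Pass to a surface of low degree in a suitable projective space.} First I would rewrite $S$ as a surface fibered over the $t$-line whose fibers are the plane cubics $C_t : y^2 = a_3(t)x^3+\cdots+a_0(t)$. Clearing denominators in $t$ (each $a_i$ has degree $\le 2$), homogenize the $t$-variable and regard $S$ as (a model of) a surface of bidegree roughly $(3,2)$ in $\P^2_{x,y}\times\P^1_t$. The key structural observation is that $S$ carries a \emph{conic bundle} structure: solving for $t$. Indeed, for fixed generic $(x,y)$, the equation $y^2 = a_3(t)x^3 + a_2(t)x^2 + a_1(t)x + a_0(t)$ is a \emph{quadratic} equation in $t$ (since every $a_i$ is quadratic in $t$). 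Thus projection $S\dashrightarrow \P^2_{x,y}$, $(x,y,t)\mapsto(x,y)$, exhibits $S$ as a conic bundle over $\P^2$: the generic fiber is a conic (a degree-$2$ curve in the $t$-line), defined over $K(x,y)$.

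\textbf{Step 2: Find a rational section, i.e. a rational point of the generic conic.} A conic bundle over $\P^2$ with a rational section is rational, and in any case a conic with a rational point is rational, so the whole surface would be unirational once I exhibit a $K(x,y)$-rational point on the generic fiber. To do this, I would look for it along a cleverly chosen curve in $\P^2_{x,y}$. The nontriviality hypothesis (the family of elliptic curves is nonconstant) guarantees that $a_3$ is not identically zero, and one can, after a linear change of coordinates over $K$, arrange that $a_3(t)$ is a nonzero constant or that the leading behaviour is controlled. The trick of Kollár--Mella is: substitute $x = $ a well-chosen rational function so that the resulting quadratic in $t$ acquires a rational root; concretely one sets up the discriminant (in $t$) of the quadratic $a_3(t)x^3+\cdots+a_0(t)-y^2 = 0$, which is a polynomial in $x$ and $y^2$, and seeks a curve on which this discriminant becomes a square. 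Because everything in sight has small degree, this discriminant locus is itself a curve of low genus (rational or elliptic), and the nontriviality of the family forces it into the rational range.

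\textbf{Step 3: Assemble the unirational parametrization.} Once a $K(x,y)$-point of the generic conic fiber is found, the conic is isomorphic over $K(x,y)$ to $\P^1$, so $S$ is birational over $K$ to a $\P^1$-bundle over $\P^2$, hence rational — in particular unirational. If instead the discriminant locus only yields a point after a quadratic base change (i.e. one only gets a point over a degree-$2$ extension $K(x,y)(\sqrt{\phantom{x}})$ of the function field of $\P^2$), then pulling $S$ back along the corresponding double cover $\P^2' \to \P^2$ produces a conic bundle \emph{with} a section, hence a rational surface $S'$ dominating $S$; since $S' \to S$ is dominant and $S'$ is rational, $S$ is unirational over $K$. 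This degree-$2$ correction is exactly why the conclusion is "unirational" and not "rational".

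\textbf{Main obstacle.} The delicate point is Step 2: producing the rational point on the generic conic uniformly in all the input polynomials, using only the hypothesis that the family is nontrivial and $\mathrm{char}\,K\ne 2$. One must show that the auxiliary curve cut out by the "discriminant is a square" condition is genuinely rational (or becomes so after one quadratic twist), which is where the degree bounds $\deg a_i \le 2$ are used essentially — they are what keep the relevant auxiliary curve of genus $0$ (or bounded genus), and what forces the nontriviality of the elliptic family to translate into a usable rational point. Handling the characteristic-$2$ exclusion and the possible bad/degenerate configurations (e.g. $a_3 \equiv 0$, square leading coefficients, common factors among the $a_i$) cleanly is the part that requires care; but since this is precisely the content of \cite[Thm. 1]{KollMell}, I would at this point simply invoke that theorem. \qedhere
\end{proof}
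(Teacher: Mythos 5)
The paper does not prove Theorem~\ref{KollarMella}: it is stated purely as a citation of Koll\'ar and Mella's result, and is then used as a black box in the proof of Theorem~\ref{thmuni}. So there is no in-text argument for you to match. Judged on its own merits, your sketch correctly identifies the central structural observation behind Koll\'ar--Mella's argument — that the degree bounds $\deg a_i\le 2$ make the defining equation quadratic in $t$, giving a conic bundle structure — but you misidentify the base of that bundle. The projection you write down, $(x,y,t)\mapsto(x,y)$, maps the surface $S$ (dimension $2$) to $\mathbb{A}^2_{x,y}$ (also dimension $2$), so the generic fiber is a zero-dimensional scheme of degree $2$: this is a double cover of the plane, not a conic bundle, and your subsequent appeals to ``a conic bundle over $\P^2$ with a rational section is rational'' do not apply (those are threefolds). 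The correct structure is the projection $(x,y,t)\mapsto x$ to $\mathbb{A}^1_x$: for fixed $x$ the equation $y^2 = a_3(t)x^3+\cdots+a_0(t)$ is a conic in the $(y,t)$-plane, so $S$ is a conic bundle over $\P^1_x$, and one then needs a section (or a rational multisection over a rational cover of $\P^1_x$) to conclude unirationality.

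The second, more fundamental, problem is that Step~2 — the substantive part — is left as ``I would at this point simply invoke that theorem.'' Since the theorem you are invoking \emph{is} the statement to be proven, the argument is circular. What makes Koll\'ar--Mella's proof nontrivial is precisely producing the required section (or multisection) from nothing but the degree bounds, the nontriviality of the family, and $\mathrm{char}\,K\neq2$; your sketch gestures at ``looking at the discriminant locus'' but does not carry this out. To make this a proof you would need to actually exhibit (after a linear change of variables to control the leading coefficients) the auxiliary rational curve over which the quadratic in $t$ acquires a root, and verify it is rational over $K$ or becomes so after a single quadratic base change. As it stands, the proposal correctly names the main structural ingredient but contains a dimensional error in its statement and does not constitute a proof.
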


In a first version of the article of  Koll\'ar-Mella \cite{KollMell}, Theorem \ref{KollarMella} excluded the isotrivial case. The author wanted to complete this result, and obtained Theorem \ref{thmuni}. However, it had been completed by Koll\'ar and Mella themselves by the time she submitted her ph.D thesis. Their technique is different from the one in this article.

\subsection{A non-isotrivial elliptic fibration}

\begin{thm}\label{thmuni}
Let $\E$ be a isotrivial rational elliptic surface given by the equation
\begin{displaymath}
\E: Y^2=X^3+aH(T)^2X+bH(T)^3,
\end{displaymath}
where $a,b\in\Z\backslash\{0\}$ and $H(T)$ is a degree $\leq2$ polynomial that is not a square.
Then the surface $\E$ is $\Q$-unirational. In particular, its rational points are dense for Zariski topology.
\end{thm}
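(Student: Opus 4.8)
The plan is to reduce Theorem \ref{thmuni} to Theorem \ref{KollarMella} by exhibiting a second elliptic fibration on $\E$ whose fibers, viewed over the appropriate base, form a \emph{non-isotrivial} family of cubics with coefficients of degree $2$. Starting from the Weierstrass model $Y^2=X^3+aH(T)^3X+bH(T)^2$ with $\deg H\le 2$, the key observation is that $\E$ carries more structure than the given fibration $\pi:\E\to\P^1_T$: after a suitable change of coordinates the surface admits a rational map to $\P^1$ whose generic fiber is again a genus-$1$ curve, but now the role of the ``parameter'' is played by (a function built from) $X$ or $Y$ rather than $T$. Concretely, I would dehomogenize and try the substitution that trades $T$ for a new variable, e.g. setting $X = H(T)\xi$ (or $X=H(T)^2\xi$, depending on which makes the equation polynomial), so that the equation becomes $Y^2 = H(T)^3(\xi^3 + a\xi) + bH(T)^2$, and then regard this as a conic or cubic in $H(T)$ with coefficients that are polynomials in $\xi$ (and $Y$). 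Because $H(T)$ is a non-constant polynomial of degree $\le 2$ which is not a square, the curve $\{H(T)=s\}$ in the $(T,s)$-plane is rational, so one can parametrize $H(T)$ by a single new parameter and obtain an honest second fibration over $\P^1$.

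The second step is to arrange the bookkeeping of degrees so that the hypotheses of Theorem \ref{KollarMella} are literally met: we need the new model to be of the form $y^2 = a_3(t)x^3 + a_2(t)x^2 + a_1(t)x + a_0(t)$ with each $a_i\in K[t]$ of degree $\le 2$, and the family must be nontrivial (non-isotrivial) as a family of elliptic curves over $K(t)$. The non-isotriviality is exactly what we gain by swapping the roles of the variables: the original fibration is isotrivial because every fiber $\E_t$ is the quadratic twist of a fixed curve $y^2=x^3+ax+b$ by $H(t)$, but the new fibration's $j$-invariant is a non-constant function of the new parameter. I would verify this by computing the $j$-invariant of the new generic fiber and checking it is non-constant in $t$ — here the hypothesis that $H$ is not a square is what prevents the substitution from collapsing the family back to something isotrivial or trivial. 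Once both conditions (degree $\le 2$ coefficients, non-isotrivial) are checked, Theorem \ref{KollarMella} gives unirationality of the new model over $\Q$, hence of $\E$ since the two are birational over $\Q$. Zariski-density of $\E(\Q)$ then follows immediately, since a $\Q$-unirational surface is dominated by $\P^2_\Q$ and $\P^2(\Q)$ is dense.

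I expect the main obstacle to be the explicit change of variables: finding the substitution that simultaneously (a) clears denominators so that the resulting equation has polynomial coefficients in the new parameter, (b) keeps those coefficients of degree at most $2$ in that parameter, and (c) produces a genuinely non-isotrivial family. The degree constraint is delicate because $H$ has degree up to $2$, so naive substitutions tend to push the degrees of the $a_i$ above $2$; one likely needs to use the rational parametrization of the conic $\{s = H(T)\}$ carefully, choosing the new parameter $t$ so that $T$ and $H(T)$ are both expressed with controlled degree, and possibly absorbing factors into $x$ and $y$ by further rescaling. A secondary, more routine issue is handling the degenerate shapes of $H$ (e.g. $\deg H = 1$ versus $\deg H = 2$, or $H$ with a repeated root versus distinct roots) as separate but parallel cases; since $H$ is assumed not to be a square, $H$ is either linear, or quadratic with distinct roots, or a constant times a non-square linear times another linear, and in each case the conic $\{s=H(T)\}$ is smooth rational and the argument goes through with minor modifications.
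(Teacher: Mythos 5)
Your overall plan — find a second, non-isotrivial elliptic fibration on $\E$ whose Weierstrass model over the new base has coefficients of degree $\le 2$, then invoke Theorem \ref{KollarMella} — is exactly what the paper does, and you correctly locate the two verification points (non-isotriviality via the $j$-invariant, and the degree bound) and the role of the hypothesis that $H$ is not a square. But the concrete substitution you sketch steers toward the \emph{wrong} fibration, and the intermediate steps do not work as written. After your $X = H(T)\xi$ together with the appropriate rescaling of $Y$, one lands on the birational model $H(T)Y^2 = X^3 + aX + b$. On this model there are three projections, to $X$, to $Y$, and to $T$. The one your description sets up — treat $\xi$ (i.e.\ the $X$-coordinate) as the base and view the equation as a cubic in $H(T)$ — is the $X$-projection, which is a \emph{conic} bundle, not an elliptic fibration: factoring $H(T)^2$ out of the right-hand side and setting $Y' = Y/H(T)$ gives $Y'^2 = (\xi^3+a\xi)H(T)+b$, a conic in $(T,Y')$ over $\Q(\xi)$. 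Theorem \ref{KollarMella} in the form you cite is about non-isotrivial families of cubics and does not apply to that conic bundle (Kollár--Mella separately prove unirationality of such conic bundles, so this would be an alternate route, but it is a different result than the one you invoke). Moreover, "parametrize $\{s=H(T)\}$" is a no-op: that graph is tautologically the $T$-line, so it yields no new parameter, and replacing $T$ by $s=H(T)$ as a coordinate passes to a quotient of $\E$ by the deck involution of $H$, which is not birational to $\E$.

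The paper instead projects to $Y$. Over a value $y$, the fiber of $H(T)Y^2 = X^3+aX+b$ is the curve
$\alpha_2 y^2 T^2 + \alpha_1 y^2 T = X^3 + aX + b - \alpha_0 y^2$
in the $(T,X)$-plane (writing $H=\alpha_2T^2+\alpha_1T+\alpha_0$), which is cubic in $X$ and quadratic in $T$, hence generically of genus one. Completing the square in $T$ brings this to a Weierstrass equation $t^2 = x^3 - 27c_4(y)x - 54c_6(y)$ with $c_4(y)$ constant and $c_6(y)$ of degree $2$ in $y$, so the degree hypothesis of Theorem \ref{KollarMella} is met. One then checks non-isotriviality by computing the $j$-invariant: it is a non-constant function of $y$ unless $a = 0$ and $c_6$ degenerates to a constant, and that degenerate case is precisely excluded by the hypothesis that $H$ is not a square. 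So the right adjustment to your proposal is to drop the parametrization of $\{s=H(T)\}$ entirely, normalize to $HY^2 = X^3+aX+b$, and use the $Y$-projection rather than the $X$-projection as the new base.
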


\begin{rem}
This result is proven by Rohrlich \cite[Theorem 3]{Rohr} under the \emph{a priori} restrictive assumption that there exists a fiber of positive rank. This assumption is removed here.
\end{rem}

\begin{proof}
Observe that the surface $\E$ is endowed with many fibrations.
\begin{center}
\ \xymatrix{
& \E : H(T)Y^2=X^3+aX+b \ar@{|->}[ld]_{\varphi_1} \ar@{|->}[d]_{\varphi_2} \ar@{|->}[rd]^{\varphi_3} \\
x 
&y 
&t}
\end{center}

The last two, $\varphi_2$ and $\varphi_3$, are elliptic fibration (with section). Even if the fibration defined by $\varphi_3$ is isotrivial, the one defined by $\varphi_2$ is not.
Indeed, if we write $H(T)=\alpha_2T^2+\alpha_1T+\alpha_0$ for the appropriate coefficients $\alpha_i$, the fibration $\varphi_2$ has the fiber
\begin{displaymath}
\E_y:=\alpha_2y^2T^2+\alpha_1y^2T=X^3+aX+b-\alpha_0y^2
\end{displaymath}
which can after a change of variables (first $T'=\alpha_2^2yT$ and $x=\alpha_2X$, then $t=T'+\frac{\alpha_1\alpha_2y}{2}$) be written
\begin{displaymath}
\E_y:t^2=x^3-27c_4(y)x-54c_6(y)
\end{displaymath}
where $c_4(y)=\alpha_2^2 a$ and $c_6(y)=(\alpha_2^3\alpha_0+\frac{\alpha_1^2\alpha_2^2}{4})y^2+\alpha_2^3 b$. 
By computing the $j$-invariant, one sees that this curve is not isotrivial, except in the case where $a=0$ ($c_4(y)$ is zero) and $\alpha_0=\alpha_2^3\alpha_0+\frac{1}{4}\alpha_1^2\alpha_2^2$, in other words when $H$ is the square of a linear polynomial (in that case, $\E$ is trivial). These cases are excluded by our hypotheses.
Hence, we can apply Theorem \ref{KollarMella}. This proves the unirationality of $\E$ endowed with the elliptic fibration $\varphi_2$.
\end{proof}

\begin{rem}
Another way to prove Theorem \ref{thmuni} would have been the use the work of Colliot-Thélène \cite{CT}. The second theorem of this article shows that for $X$, a conic bundle of degree 4, the Brauer-Manin obstruction to the Hasse principle is the only obstruction.
To deduce from this Theorem \ref{thmuni}, one would have to check that the Brauer group of the surfaces that we consider (whose equation is $h(t)y^2=x^3-ax$ where $\deg h=2$) is the Brauer-group of $\Q$.
\end{rem}

\section{Isotrivial rational elliptic surfaces with $j(T)=1728$}\label{IRES1728}

\subsection{A section of infinite order}\label{constructionsection}

We study now the isotrivial rational elliptic surfaces of the form $y^2=x^3+xA(T)$ where $A\in\Z[T]$ is such that $\deg A\leq4$.
The density of rational point is proven in the case where $\deg A\leq3$ by Ulas in \cite{Ulas2}. For this reason, we concentrate on surfaces such that $\deg A=4$. Let $a_4,a_3,a_2,a_1,a_0\in\Z$ be the coefficients such that $$A(T)=a_4T^4+a_3T^3+a_2T^2+a_1T+a_0.$$
First observe that we have $F(T)=a_4\left(\left(T^2+g_1T+g_0\right)^2+h_1T+h_0\right)$, where $$g_0=\frac{4a_2a_4-a_3^2}{8a_4^2}\text{, }g_1=\frac{a_3}{2a_4}\text{, }h_0=\frac{2^6a_4^3(a_0+x^2)-(4a_2a_4-a_3^2)^2}{2^6a_4^4}$$ and $$h_1=\frac{2^3a_1a_4^2-a_3(4a_2a_4-a_3^2)}{2^3a_4^3}.$$
We make the change of variables $T'=T-g_1/2$.
%We have $$T'^4=T^4+\frac{g_1}{2}T^3+\frac{6g_1^2}{4}T^2+\frac{g_1^3}{2}T+\frac{g_1^4}{16}$$
Thus we can write $$A(T)=a_4(T'^4+(\frac{-g_1^2}{2}+2g_0)T^2+(\frac{-g_1^3}{2}+2g_1g_0)T+(\frac{-g_1^4}{2^4}+g_0^2+h_0)).$$
Replacing $T^2$ and $T$ by their expressions in terms of $T'$,
we obtain the following equation: $$y^2=x^3 + a_4x(T'^4+A_2T'^2+A_1T'+A_0),$$
where $$A_2=(g_0-\frac{g_1^2}{2})\text{, }\quad
A_1=(\frac{g_1^3}{2}+a_2g_1)\text{, and }\quad
A_0=(-\frac{g_1^4}{2^4}+g_0^2+h_0).$$
Hence, one can assume that $a_3=0$ (or else we do the change of variable previously explained).
The surface $\E$ has the following fibrations.
\begin{center}
\ \xymatrix{
& \E : Y^2=X^3+A(T)X \ar@{|->}[ld]_{\varphi_1} \ar@{|->}[d]_{\varphi_2} \ar@{|->}[rd]^{\varphi_3} \\
x 
&y 
&t}
\end{center}

The initial fibration is $\varphi_3$. The fiber $\varphi_1: (x,y,t)\mapsto x$ is a genus 1 curve, and this fibration is a priori without section\footnote{To ensure the existence of a section to $\varphi_1:(x,y,t)\mapsto x$, it would be necessary to check that every $\varphi_1^{-1}(x)$ admits a rational point.}.

The equation of the fiber at $x$ can be written as:
\begin{displaymath}
C_x: y^2=a_4xt^4+a_2xt^2+a_1xt+(a_0x+x^3).
\end{displaymath}

It is a genus 1 curve with two points at infinity, denoted by $\infty_+$ and $\infty_-$, which are rational if and only if $x\in a_4\Q^{*2}$.

\begin{propo}\label{ptnontorsion}
Let $P_x=cl((\infty_+)-(\infty_-))\in C_x(\Q)$ for $x\in a_4\Q^{*2}$.

Then 
\begin{itemize}
\item if $a_1=0,$ the point $P_x$ has order 2,
\item if $a_1\not=0$, $P_x$ has infinite order (except for finitely many $x$).
\end{itemize}
\end{propo}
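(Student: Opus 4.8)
The two cases call for entirely different devices. For $a_1=0$ I would exhibit an explicit rational function on $C_x$ whose divisor is $2(\infty_-)-2(\infty_+)$, giving $2P_x=0$ directly. For $a_1\neq0$ I would recognise the family $\{C_x\}_{x\in a_4\Q^{*2}}$ as a rational elliptic surface, show that $P_x$ extends to a non-torsion section, and then conclude by Silverman's specialization theorem.

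\textbf{Case $a_1=0$.} Put $c=\sqrt{a_4x}$, which lies in $\Q^{\times}$ precisely because $x\in a_4\Q^{*2}$, and note $x=c^2/a_4$. Completing the square,
$$a_4x\,t^4+a_2x\,t^2+(a_0x+x^3)=c^2\Bigl(t^2+\tfrac{a_2}{2a_4}\Bigr)^{2}+\kappa,\qquad \kappa:=a_0x+x^3-\tfrac{a_2^2c^2}{4a_4^2}\in\Q ,$$
so on $C_x$ the function $\overline h:=y+c\bigl(t^2+\tfrac{a_2}{2a_4}\bigr)$ satisfies $\overline h\cdot\bigl(y-c(t^2+\tfrac{a_2}{2a_4})\bigr)=\kappa$, and $\kappa\neq 0$ whenever $C_x$ is smooth. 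Since $y=\pm ct^2+O(1)$ on the two branches at infinity, $\overline h$ has a double pole at $\infty_+$, a double zero at $\infty_-$, and (as $\kappa\neq0$) no other zeros or poles, i.e. $\operatorname{div}(\overline h)=2(\infty_-)-2(\infty_+)$. Hence $2P_x=0$, while $P_x\neq0$ because $(\infty_+)\not\sim(\infty_-)$ on a curve of genus $1$; so $P_x$ has order exactly $2$.

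\textbf{Case $a_1\neq0$.} Setting $x=a_4w^2$ and $y=a_4wv$ turns $\{C_x\}$ into
$$\mathcal C:\ v^2=t^4+p\,t^2+q\,t+s(w),\qquad p=\tfrac{a_2}{a_4},\quad q=\tfrac{a_1}{a_4},\quad s(w)=\tfrac{a_0}{a_4}+a_4w^4 ,$$
a family over $\P^1_w$ on which $\infty_\pm$ are now rational sections; take $\infty_+$ as the zero section $O$. A Weierstrass model of $\mathcal C$ has discriminant of degree $12$ in $w$, so $\mathcal C\to\P^1_w$ is a rational elliptic surface, and $P_x$ extends to a section $S$ of it. The heart of the matter is to prove $S$ is non-torsion. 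First, the function $h=v-(t^2+\tfrac p2)$ has $\operatorname{div}(h)=(\infty_+)+(Q)-2(\infty_-)$, where $Q$ is the affine point with $t$-coordinate $t_1(w)=\dfrac{a_2^2-4a_0a_4-4a_4^2w^4}{4a_1a_4}$ — here $q\neq0$, i.e. $a_1\neq0$, is exactly what makes the zero of $h$ at $\infty_+$ simple rather than double. Thus $2S=-[(Q)-(\infty_+)]\neq0$ (as $Q$ is an affine point), so $S$ is not $2$-torsion. To exclude torsion altogether I would pass to a minimal Weierstrass model $Y^2=X^3+a(w)X+b(w)$ of $\mathcal C$ and compute the $X$-coordinate of $Q$ (hence of $2S$) as a rational function of $w$: since $t_1(w)$ and $s(w)$ have $w$-degree $4$, this coordinate violates the integrality-and-low-degree constraints satisfied by torsion sections of a rational elliptic surface (Shioda's theory of Mordell--Weil lattices, together with the Oguiso--Shioda classification of torsion subgroups); equivalently, one may compute $\hat h(S)=2+2(S\cdot O)-\sum_v\operatorname{contr}_v(S)$ by Shioda's height formula and check it is positive. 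Either way $2S$, and hence $S$, is non-torsion on the generic fibre.

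\textbf{Conclusion.} By Silverman's specialization theorem the set of $w_0\in\Q$ for which the specialization map is not injective is finite; since $S$ is non-torsion, its specialization $S(w_0)=P_{x_0}$ (with $x_0=a_4w_0^2$) then has infinite order for all but finitely many $w_0$, i.e. $P_x$ has infinite order for all but finitely many $x\in a_4\Q^{*2}$. The main obstacle is the non-torsion statement for $S$, and in particular establishing it uniformly in the coefficients $a_0,a_1,a_2,a_4$; this is why I would lean on the degree/integrality obstruction for torsion sections (or on Shioda's height formula) rather than on producing an ad hoc good fibre by hand.
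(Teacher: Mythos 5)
Your two cases have rather different status, so let me take them separately.

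\textbf{Case $a_1=0$.} Your argument is correct and in fact cleaner than the paper's: you exhibit the principal divisor $2(\infty_-)-2(\infty_+)=\operatorname{div}(\overline h)$ directly on the quartic model, so $2P_x=0$ without ever passing to a Weierstrass cubic. The paper instead converts to the Weierstrass form $S^2+\frac{h_1}{4}S=R^3-g_0R^2-\frac{h_0}{4}R$ and observes that when $h_1=a_1/a_4=0$ the image $(0,-h_1/4)=(0,0)$ of $\infty_-$ is a $2$-torsion point. Both routes work; yours avoids the change of variables and makes the $2$-torsion manifest.

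\textbf{Case $a_1\neq0$.} The overall skeleton — set $x=a_4w^2$ so that $\infty_\pm$ become rational sections of a rational elliptic surface over $\P^1_w$, show the section $S$ is non-torsion on the generic fibre, then invoke Silverman's specialization theorem — is the same as the paper's. Your divisor computation $\operatorname{div}(h)=(\infty_+)+(Q)-2(\infty_-)$ with $q\neq 0$ correctly shows $2S\neq O$, hence $S$ is not $2$-torsion. (Minor slip: the $t$-coordinate of $Q$ should have numerator $a_2^2-4a_0a_4-4a_4^{3}w^4$, not $4a_4^2w^4$; this does not affect the $w$-degree.) However, this only rules out order $2$. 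The step you describe next — ``pass to a minimal Weierstrass model \dots compute the $X$-coordinate of $Q$ \dots check it violates integrality-and-low-degree constraints,'' or ``compute $\hat h(S)$ by Shioda's height formula and check it is positive'' — is not carried out; it is a program, not a proof. You would need to actually determine the fibre types of $\mathcal C\to\P^1_w$ and the intersection $(S\cdot O)$ (for the height formula) or actually carry out the Weierstrass reduction and compare degrees (for the degree obstruction). You acknowledge this yourself (``the main obstacle is the non-torsion statement for $S$''), and that candour is appropriate: as written this is a genuine gap.

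By contrast the paper fills exactly this gap by a concrete Lutz--Nagell argument. After scaling to an integral model $C_x'$ with $\alpha=2a_4 v$ (where $x=u/v$ in lowest terms), it computes $R([2]Q_0)$ for $Q_0=(0,0)$ and shows this fails to be an integer whenever $v$ is chosen prime to $2a_4$; hence $Q_0$ (and so $S$) has infinite order on infinitely many explicit fibres, which already implies non-torsion on the generic fibre. This is a self-contained replacement for the Shioda--Oguiso machinery you invoke.

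So: your $a_1=0$ argument is a nice alternative to the paper's; your $a_1\neq0$ argument correctly shows $S$ is not $2$-torsion but does not complete the non-torsion step, which is the crux. Filling it along either of the lines you name is plausible but requires a genuine computation that you have not done.
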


\begin{proof}

Explicitely, putting $u=\frac{1}{t}$ and $v=\frac{y}{t^2}$, one has in coordinate $(u,v)$ :
$$\infty_+=(0,b)\text{, and }\infty_-=(0,-b).$$
Suppose that $b^2=a_4x$ for a certain rational number $b$. We write 
\begin{displaymath}
C_x:y^2=b^2t^4+a_2xt^2+a_1xt+a_0x+x^3.
\end{displaymath}
\begin{comment}
Proceed to the change of variables $Y=\frac{y}{\sqrt{a_4x}}$ to obtain the equation:

$$C_x:Y^2=T^4+\frac{a_2}{a_4}T^2+\frac{a_1}{a_4}T+\frac{a_0+x^2}{a_4}.$$
We can write the right side of the equation under the following form:
$$G(T)^2+H(T),$$
where $$G(T)=T^2+g_0,\quad H(T)=h_1T+h_0,$$
and where the functions $g_j$ and the $h_j$ depend on the coefficients $a_i$. Explicitely, one has $$g_0=\frac{a_2}{2a_4}\text{, }\quad h_0=\frac{2^2a_4(a_0+x^2)-a_2^2}{2^2a_4^2}\quad\text{ and }h_1=\frac{a_1}{a_4}.$$
The equation of the curve can be written
\begin{equation}\label{quadricGH}
(Y+G(T))(Y-G(T))=H(T).
\end{equation}
Put $Y+G(T)=R,$
so that 
$$Y-G(T)=\frac{H(T)}{R}\text{ \
and that \ }2G(T)=R-\frac{H(T)}{R}.$$
Moreover put $RT=S$.
By multiplying the equation \eqref{quadricGH} by $R$, one obtain
\begin{equation}\label{quadricpresque}
2S'^2+2g_0R'^2=R'^3-h_1S'-h_0R'.
\end{equation}
Finally, one does the change of variables $(R,S)=(2R',2S')$ to 
\end{comment}
A well-chosen change of variables\footnote{We use here a very classical method, explained in particular in a book of Cassels' \cite{Cassels}. An interested reader can also find the details of the change of variables in the author's phD thesis \cite[Section 1.1.3]{Desjardinsthese}. Observe that the coefficients $g_0,h_0,h_1$ in the general Weierstrass equation (\ref{weierstrassgenerale}) correspond to the quantities previously defined in this Section.} gives the following general Weierstrass equation for $C_x$.

\begin{equation}\label{weierstrassgenerale}
C_x : S^2+\frac{h_1}{4}S=R^3-g_0R^2-\frac{h_0}{4}R,
\end{equation}
where $$g_0=\frac{a_2}{2a_4}\text{, }\quad h_0=\frac{2^2a_4(a_0+x^2)-a_2^2}{2^2a_4^2}\quad\text{ and }h_1=\frac{a_1}{a_4}.$$
The two points at infinity are send to the two obvious points of (\ref{weierstrassgenerale}). We have:
$$R(\infty_+)=\infty \quad\text{ and }S(\infty_+)=\infty .$$
$$R(\infty_-)=0\quad\text{ and }S(\infty_-)=-\frac{h_1}{4}$$

\begin{comment}
We now look at which points of this new curve are sent the two points at infinity $\infty_+$ and $\infty_-$ previously mentionned. First, we find their coordinate in $R$.
One has $$2R=R'=Y+G(T)=\frac{y}{b}+T^2+g_o.$$ Put $T=\frac{1}{u}$ and $y=vT^2=\frac{v}{u^2}$ since it will be easier to study the poles. We have
\begin{align*}
2R & =\frac{v}{u^2b}+\frac{1}{u^2}+g_0\\
& =\frac{uv+b+g_0u^2b}{bu^2}.
\end{align*}
For $\infty_+$, we have $(u,v)=(0,b)$ and for $\infty_-$, one has $(u,v)=(0,-b)$.
Hence, one has
$$R(\infty_+)=\infty \text{ (it is a pole), and }R(\infty_-)=0.$$
Now, find the value of their coordinate in $S$. 
Observe that $$
y^2 =G(T)^2+H(T)$$
$$\Rightarrow(\frac{v}{u^2b}) =G\left(\frac{1}{u}\right)^2+H\left(\frac{1}{u}\right)$$
$$\Rightarrow \left(\frac{v}{u^2b}-G\left(\frac{1}{u}\right)\right)\left(\frac{v}{u^2b}+G\left(\frac{1}{u}\right)\right)=H(\frac{1}{u})$$

$$\Rightarrow \left(\frac{v}{u^2b}+G(\frac{1}{u})\right)=\frac{H(\frac{1}{u})u^2b}{v-u^2G(\frac{1}{u})b}=\frac{ub(h_1+h_0u)}{v(v-b-g_0u^2)}$$

\begin{align*}
2S=S' & =T(Y+G(T))\\
& = \frac{ub(h_1+h_0u)}{v(v-b-g_0u^2)}
\end{align*}
Thus, one has
$$S(\infty_+)=\infty \text{ and }S(\infty_-)=-\frac{h_1}{4}$$
These are the two obvious points on the curve \ref{weierstrassgenerale}. \end{comment}
We put in a natural way the point obtain from $\infty_+$ as marked point of the curve $C_x$, that is as the identity element of the group law of the set of rational points. With this configuration one has $(0,-\frac{h_1}{4})=[-1](0,0)$.
We deduce of this that if $h_1=\frac{a_1}{a_4}=0$, then the point $(0,-\frac{h_1}{4})$ has order 2.

In the case where $h_1\not=0$, let us find the order of $Q=(0,0)$. Its order will be the same as the one obtained from $\infty_-$.
We use a result proven simultaneously by Lutz and Nagell which can be found \cite[p.240]{Silv1}: if
 $E/\Q$ is an elliptic curve of Weierstrass equation $y^2=x^3+Ax+B$, $A,B\in\Z$ and that
 $P\in E(\Q)$ is a torsion point different for the point at infinity, then the following properties hold:
\begin{enumerate}
\item $x(P),y(P)\in\Z$.
\item We have either $[2]P=O$ or $x([2]P)\in\Z$. 
\end{enumerate}

To use this fact, we need to consider a curve with integer coefficients (denote these coefficients by $A_i$). As the coefficients of $C_x$ might not be integers, we will choose a certain integer $\alpha$ for which the twist of the curve has integer coefficients.
Let $u$ and $v$ be the coprime integers such that $x=\frac{u}{v}$. If we put $\alpha=2\cdot a_4v$, the coefficients of the curve are integers
$$C_x':S^2+\alpha^3\frac{h_1}{4}S=R^3-\alpha^2g_0R^2-\alpha^4\frac{h_0}{4}R.$$
(In fact, it is sufficient to put $\alpha=2a_4w$ where $w$ is an integer such that $v^2\mid w$.)
We now show that if $h_1\not=0$, the point $Q$ is not 2-torsion for infinitely many values of $x$.  
We first find the condition for $R([2]Q)$ to be an integer. 
%For every point $P\in C_x(\Q)$, one has $$R([2]P)=\Big(\frac{3R(P)^2-2\alpha^2g_0R(P)-\alpha^4\frac{h_0}{4}}{2S(P)+\frac{h_1}{4}\alpha^3}\Big)^2-2R(P)+\alpha^2g_0.$$ 
%We thus have
We have:
$$R([2]Q)=\Big(\frac{4\alpha^4h_0}{4\alpha^3 h_1}\Big)^2+4\alpha^2g_0.$$
For this coordinate to be an integer, we need $\alpha^3h_1$ to divide $\alpha^4h_0$. 
Recall that  $x=\frac{u}{v}$ where $u,v\in\Z$ are coprime. We have 
$$\alpha^4h_0=A\left(\frac{u}{v}\right)^2+B,$$
where $A=2^4a_4^3v^4$ and $B=2^4a_4^3a_0u^4-2^2a_4^2a_2^2v^4.$
As for the quantity $\alpha^3 h_1$, it is an integer of value $$\alpha^3 h_1=2^3a_4^2a_1v^3.$$ 

If $\alpha^3 h_1\mid \alpha^4 h_0$ for every $x\in\Q^{2*}a_4$, then $\alpha^3 h_1$ divides $B$ (we obtain this taking for instance $x=(\alpha^3 h_1)^2 a_4$ ). Therefore, $\alpha^3 h_1$ divides $Ax^2$ for any choice of $x$. Choose $v$ prime to $2a_4$. In this case, we have a contradiction since $Ax^2=2^3a_4^2v^2(2a_4u)$ should be divisible by $2^3a_4^2a_1v^3$, but $v$ is assumed to be prime to $2a_4$ and to $u$. This contradiction shows that for every $x\in\Q^{2*}a_4$ whose denominator is prime to $2a_4$, the point $Q$ is of infinite order on the curve $C_x$.

We conclude the proof by using Silverman's specialization theorem (see \cite{Silvermanspec} and \cite[Theorem 11.4, Chapter III]{Silv2}).
A priori, the fibration
\begin{align*}
  \varphi_2 \colon \E &\to \P^1_\Q\\
  (x,y,t) &\mapsto x.
\end{align*}
is not an elliptic surface over $\Q$.
However, let us consider the application
\begin{align*}
  \phi \colon \P^1_\Q &\to \P^1_\Q\\
  z &\mapsto x=a_4z^2.
\end{align*}
and the fibered product $\E'$ of $\E$ with respect to the fibration. By the previous argument, $\E'$ admits two sections $\infty_+$ and $\infty_-$. It is thus an elliptic surface over $\Q$. Let us choose as the canonic section $\infty_+$.

If there exists a linear change of variable such that $A=A_4T'^4+A_2T^2+A_0$, then $\infty_-$ is a torsion point on every fiber at $x=az^2$ of $\E$. Therefore, the section $\infty_-(z)$ is torsion for every $z\in\P^1_\Q$ (except finitely many, i.e. those defining a singular fiber).

If there is no such change of variable, then the point $\infty_-$ has infinite order for infinitely many fibers of $\E$. Therefore, Silverman's specialization theorem guarantees that $\infty_-(z)$ has infinite order on every fiber of $\E'$ except for finitely many $z$.
\end{proof}

We directly deduce from this proposition the following theorem :

\begin{thm}\label{4thmA}
Let $\E$ be a rational elliptic surface given by the equation
\begin{displaymath}
\E:Y^2=X^3+A(T)X,
\end{displaymath}
where $A(T)$ is a polynomial of degree 4 with integer coefficients. 

Suppose there exists no linear change of variable $T\rightarrow T'+b$ such that $A$ is of the form $$A(T')=A_4T'^4+A_2T'^2+A_0,$$ where $A_4,A_2,A_0\in\Z$.

Then the rational points of $\E$ are Zariski-dense.
\end{thm}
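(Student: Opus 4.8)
The plan is to deduce the statement from Proposition~\ref{ptnontorsion} by the standard ``fibration with infinitely many good fibres'' argument, after first bringing $A$ into the normal form used there. First I would record the two operations on the Weierstrass model that do not change $\E$ up to $\Q$-isomorphism: the translation $T\mapsto T+b$ with $b\in\Q$, and the substitution $(x,y)\mapsto(c^{-2}x,c^{-3}y)$, which replaces $A(T)$ by $c^{4}A(T)$. Using the first to kill the coefficient of $T^{3}$ and then the second to clear denominators, I may assume $A(T)=a_{4}T^{4}+a_{2}T^{2}+a_{1}T+a_{0}$ with $a_{i}\in\Z$, exactly as at the beginning of Section~\ref{constructionsection}. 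Once $A$ is in this shape it has no term of odd degree if and only if $a_{1}=0$; and since the translation just used is the unique one removing the cubic term, the hypothesis of the theorem (read up to the isomorphisms above) is equivalent to $a_{1}\neq 0$, which is precisely the case in which Proposition~\ref{ptnontorsion} yields points of infinite order.

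Next I would apply Proposition~\ref{ptnontorsion} to the fibration $\varphi_{1}\colon\E\to\P^{1}_{\Q}$, $(x,y,t)\mapsto x$. For every $x$ in the infinite set $a_{4}\Q^{*2}$ the fibre $C_{x}$ carries the two rational points $\infty_{+},\infty_{-}$, so it is a genus~$1$ curve with a rational point, and by the Proposition (here one uses $a_{1}\neq 0$) the class $P_{x}=\mathrm{cl}((\infty_{+})-(\infty_{-}))\in C_{x}(\Q)$ has infinite order for all but finitely many such $x$. Discarding those finitely many exceptions (and the finitely many $x$ for which the quartic in $t$ has a repeated root, i.e.\ $C_{x}$ is singular) leaves an infinite set $S\subseteq a_{4}\Q^{*2}$ such that for each $x\in S$ the fibre $C_{x}$ is a geometrically irreducible curve lying on $\E$ with $C_{x}(\Q)$ infinite, as it contains every multiple $[n]P_{x}$.

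Finally I would conclude with the usual density argument. If $\E(\Q)$ were not Zariski-dense it would be contained in a proper closed subset of the irreducible surface $\E$, hence in a finite union of irreducible curves together with finitely many points. Each of those curves is either one of the fibres $C_{x}$, which can happen for only finitely many $x$, or it dominates $\P^{1}_{\Q}$ under $\varphi_{1}$ and therefore meets every fibre $C_{x}$ in only finitely many points. So $C_{x}(\Q)$ can be contained in this union for at most finitely many $x$, contradicting the fact that it is so for every $x$ in the infinite set $S$; hence $\E(\Q)$ is dense. I expect the only delicate point to be the bookkeeping in the first step --- checking that the intrinsic hypothesis ``no linear change of variable brings $A$ to the form $A_{4}T^{4}+A_{2}T^{2}+A_{0}$'' is genuinely equivalent, after the isomorphisms preserving $\E$, to the condition $a_{1}\neq 0$ required by Proposition~\ref{ptnontorsion}; everything after that is formal.
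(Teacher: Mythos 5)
Your proof is correct and follows the same strategy as the paper's own short proof: normalize away the cubic term, apply Proposition~\ref{ptnontorsion} to the fibration $\varphi_1\colon(x,y,t)\mapsto x$ to obtain infinitely many fibres $C_x$ ($x\in a_4\Q^{*2}$) carrying a rational point of infinite order, and conclude Zariski-density by the standard fibration argument. You are in fact somewhat more careful than the paper, which compresses the final step to a single sentence and contains an apparent typo (``We can assume that $a_1=a_3=0$'' should read ``$a_3=0$'', since Proposition~\ref{ptnontorsion} needs $a_1\neq 0$ for the non-torsion conclusion); your explicit equivalence of the theorem's hypothesis with $a_1\neq 0$ after normalization, and your spelled-out density argument, supply exactly what the paper leaves implicit.
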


\begin{rem}
The surfaces which are not treated by this theorem are of the form: $$y^2=x^3+x(a_4T^4+4ba_4T^3+(6b^2a_4+a_2)T^2+(4b^3+2ba_2)T+a_4b^4+a_2b^2+a_0)$$ for a certain $b\in\Q$ and $a_4,a_2,a_0\in\Z$ such that $\sqrt{a_2^2-4a_4a_0}\not\in\Q$. \footnote{Note that the case $a_2^2=4a_4a_0$ is already excluded by assumption that $\E$ is non-trivial.} Suppose $\E$ is of that form, then by Proposition \ref{ptnontorsion} the points $P_x$ constructed previously is 2-torsion on $C_x$ for almost every $x\in a_4\Q^{*2}$
\end{rem}

\begin{proof}

We can assume that $a_1=a_3=0$. For these surfaces, the application $(x,y,t)\mapsto x$ is a fibration in genus 1 curves, infinitely many of which (in fact every fiber at $x\in a_4\Q^{*2}$ except a finite number of them)) admits a structure of group and a point of infinite order.
This shows the density of rational points of $\E(\Q)$.
\end{proof}

%Suppose that the surface we consider is such that $A_1=0$. Proposition \ref{ptnontorsion} shows  has a point of order 2, but 
In the two next sections, we give other arguments showing the density of the rational points in more generality.

\subsection{A conic-bundle structure}
\begin{thm}
Let $\E$ be a rational elliptic surface of Weiestrass equation $$y^2=x^3+A(T^2-\alpha)(T^2-\beta)x,$$
where $A,\alpha,\beta\in\Q$.
Then the rational points are Zariski-dense.
\end{thm}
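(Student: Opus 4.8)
The plan is to transform $\E$, by a $\Q$-birational change of variables, into the shape of the cubic fibrations handled by Koll\'ar and Mella (Theorem~\ref{KollarMella}); the intermediate surface carries a natural conic bundle structure, which is exactly the ingredient their argument exploits and which gives this subsection its name.

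First I redistribute the degrees of the coefficients. Read as a family of plane cubics in $x$ over $\P^1_\Q$, the surface $\E:y^2=x^3+A(T^2-\alpha)(T^2-\beta)x$ has a coefficient of $x$ of degree $4$ in $T$, which prevents a direct appeal to Theorem~\ref{KollarMella}. I therefore apply the $\Q$-birational map $(x,y,T)\mapsto(\xi,\eta,T)=\bigl(x/(T^2-\alpha),\,y/(T^2-\alpha),\,T\bigr)$, an isomorphism away from $T^2=\alpha$, with inverse $(x,y)=\bigl((T^2-\alpha)\xi,(T^2-\alpha)\eta\bigr)$. Substituting into the Weierstrass equation and dividing by $(T^2-\alpha)^2$ yields the $\Q$-birational model
\[
S\colon\qquad \eta^2=(T^2-\alpha)\,\xi^3+A(T^2-\beta)\,\xi ,
\]
whose coefficients, as a cubic in $\xi$ over $\P^1_\Q$ with parameter $T$, are $\bigl(T^2-\alpha,\ 0,\ A(T^2-\beta),\ 0\bigr)$, all of degree $\le 2$. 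Fibred instead over the $\xi$-line, $S$ is visibly a conic bundle: the fibre over $\xi$ is the conic $\eta^2=\xi(\xi^2+A)\,T^2-\xi(\alpha\xi^2+A\beta)$ in the variables $(\eta,T)$. This is the conic bundle structure advertised in the title, and it is the structure through which Koll\'ar and Mella prove Theorem~\ref{KollarMella}.

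Next I verify the hypotheses. Replacing $\xi$ by $\xi+c$ for a rational $c$ with $c\neq 0$, $c^2\neq -A$ and $3c^2\neq -A$, a short expansion shows all four coefficients of the cubic in $\xi$ then have degree exactly $2$ in $T$. The resulting family of elliptic curves over $\Q(T)$ is non-trivial: its generic fibre is $\Q(T)$-isomorphic, via the inverse map, to $Y^2=X^3+A(T^2-\alpha)(T^2-\beta)X$, the quartic twist of $Y^2=X^3+X$ by $A(T^2-\alpha)(T^2-\beta)$, and this polynomial is not a constant multiple of a fourth power in $\Q(T)$ unless $\alpha=\beta=0$ — so, away from that case, $\E$ is not a trivial product. (When $\alpha=\beta=0$ the surface $\E$ genuinely is the trivial product $\{y^2=x^3+Ax\}\times\P^1_\Q$ and the statement may fail; when $\alpha=\beta\neq 0$ one reads $(T^2-\alpha)(T^2-\beta)$ as $(T^2-\alpha)^2$ and the argument is unchanged.) Finally $(\xi,\eta)=(-c,0)$, the image of the $2$-torsion point $(0,0)$ of $\E$, lies on every fibre, so this is indeed a family of elliptic curves. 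Theorem~\ref{KollarMella} now shows $S$ is $\Q$-unirational; being $\Q$-birational to $\E$, the surface $\E$ is $\Q$-unirational as well, and in particular $\E(\Q)$ is Zariski-dense.

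The main obstacle is this verification rather than any computation: one must be sure that, after the translation in $\xi$, the transformed fibration genuinely meets the hypotheses of Theorem~\ref{KollarMella} in the form completed by Koll\'ar and Mella — all four coefficients of degree exactly $2$, and a non-trivial, non-degenerate family whose generic fibre is a smooth curve of genus one — and that the handful of degenerate configurations (above all $\alpha=\beta=0$, where $\E$ becomes a trivial product) are separated off by hand. The birational transformation itself and the degree bookkeeping are routine.
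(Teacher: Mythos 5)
Your proof is correct, and your opening move --- the birational change of variables clearing the square factor to land on $\eta^2=(T^2-\alpha)\xi^3+A(T^2-\beta)\xi$, together with the observation that this surface is a conic bundle when projected to the $\xi$-line --- is the same as the paper's. You diverge in the last step. The paper keeps the conic bundle over the $\xi$-line, notes it has fewer than six singular fibers, and invokes Corollary~8 of Koll\'ar--Mella directly, with no degree adjustment needed. You instead return to the $T$-fibration, translate $\xi\mapsto\xi+c$ to force all four coefficients to have degree exactly two, and invoke Theorem~\ref{KollarMella}. This works too, provided one reads ``nontrivial family'' in Theorem~\ref{KollarMella} in the sense of the completed Koll\'ar--Mella result, which --- as the paper's remark following that theorem records --- does cover isotrivial families so long as the surface is rational; rationality is precisely the standing hypothesis here (and is also what excludes your degenerate case $\alpha=\beta=0$, a trivial product, which is not a rational surface). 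The trade-off is small: the paper's appeal to Corollary~8 is leaner since it skips the translation and degree bookkeeping, while your route through Theorem~\ref{KollarMella} makes explicit exactly which degenerate configurations must be ruled out and why they fall outside the hypothesis.
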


\begin{proof}
By changing variables $X=(T^2-\alpha)x$ and $Y=(T^2-\alpha)^2y$, one obtain the equation 
$$Y^2=(T^2-\alpha)X^3+A(T^2-\beta)(T^2-\alpha)^4X$$
which is isomorphic to $$Y^2=(T^2-\alpha)X^3+A(T^2-\beta)X.$$ 
A reshuffle of the terms permits to obtain the following equation for $\E$  $$Y^2-T^2(X^3-AX)+(\alpha X^3+XA\beta)=0$$ which is a conic bundle.
This bundle has less than 6 singular fibers. Corollary 8 of the article of Koll\'ar and Mella \cite{KollMell} thus shows unirationnality of $\E$. Therefore, the rational points are dense.
\end{proof}

\subsection{Density on isotrivial elliptic surfaces with $j=1728$}

As a conclusion for this section, we show the density of rational points on every isotrivial rational elliptic surfaces with $j$-invariant 1728.

\begin{thm}\label{density1728}
Let $\E$ be an isotrivial rational elliptic surface with $j(T)=1728$.
Then the rational points $\E(\Q)$ are Zariski-dense.
\end{thm}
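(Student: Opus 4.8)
The plan is to use the classification of isotrivial rational elliptic surfaces recalled in Section~\ref{IRES}: such an $\E$ with $j(T)=1728$ has a Weierstrass model $\E:y^2=x^3+A(T)x$ with $A\in\Z[T]$ and $\deg A\le 4$. If $\deg A\le 3$ the density of $\E(\Q)$ is elementary (see the start of Section~\ref{constructionsection}), so I would concentrate on $\deg A=4$. The technical device I would exploit is that a $\Q$-rational Möbius substitution $T\mapsto(\alpha T+\beta)/(\gamma T+\delta)$, followed by the usual rescaling of $x,y$ and clearing of denominators, carries $\E$ to another surface $y^2=x^3+\widetilde A(T)x$ with $\widetilde A\in\Z[T]$ of degree $\le 4$, $\Q$-isomorphic to $\E$, whose branch locus is the image of that of $A$; I am therefore free to replace $\E$ by any model in its Möbius orbit.

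Let $R\subset\P^1_{\overline{\Q}}$ be the set of distinct roots of $A$. Theorem~\ref{4thmA} yields density unless $A$ is biquadratic after a translation, i.e. unless $R$ is stable under an involution of $\P^1$ fixing $\infty$; after conjugating by a Möbius map $\mu$, the corresponding condition for the new polynomial $\widetilde A$ reads ``$R$ is stable under an involution fixing $\mu^{-1}(\infty)$''. Since $\mu^{-1}(\infty)$ may be taken to be any prescribed point of $\P^1(\Q)$, it suffices to exhibit some $q\in\P^1(\Q)$, $q\notin R$, that is a fixed point of no $R$-stabilizing involution. When $|R|\ge 3$ the stabilizer of $R$ in $\mathrm{PGL}_2(\overline{\Q})$ is finite (it embeds into the permutation group of $R$), hence has only finitely many fixed points on $\P^1$, whereas $\P^1(\Q)$ is infinite; so such a $q$ exists, the resulting $\widetilde A$ still has degree $4$ and no translation puts it in biquadratic form, and Theorem~\ref{4thmA} applies. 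This disposes of every surface whose geometric branch locus has at least three points.

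It then remains to treat $|R|\le 2$. If $|R|=1$ then $A=c(T-r)^4$ with $c\in\Q^*$, $r\in\Q$, and the substitution $(x,y,T)\mapsto\bigl(x/(T-r)^2,\,y/(T-r)^3,\,T\bigr)$ presents $\E$ as birational to $\{y^2=x^3+cx\}\times\P^1$, a trivial surface, contrary to hypothesis. If $|R|=2$, the two root-multiplicities add up to $4$ and are respected by $\mathrm{Gal}(\overline{\Q}/\Q)$, so there are two cases. Multiplicities $(2,2)$ give $A=cH(T)^2$ with $c\in\Q^*$ and $H$ a separable quadratic; translating so that $H(T)=T^2+q$ makes $A=c(T^2+q)^2$, which is of the shape $c(T^2-\alpha)(T^2-\beta)$ with $\alpha=\beta=-q\in\Q$ (equivalently $a_2^2-4a_4a_0=0$), so it lies in the scope of the conic-bundle theorem proved just above and density follows from the Koll\'ar--Mella unirationality argument used there. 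Multiplicities $(3,1)$ give $A=c(T-r)^3(T-s)$ with $r\ne s$ and $r,s\in\Q$; translating by $T\mapsto T+\tfrac{3r+s}{4}$ to kill the cubic term turns $A$ into $c\bigl(U^4-6a^2U^2+8a^3U-3a^4\bigr)$ with $a=\tfrac{r-s}{4}\ne 0$, whose linear coefficient $8a^3c$ is nonzero; hence $A$ is biquadratic after no translation, and Theorem~\ref{4thmA} again applies.

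The delicate point, I expect, is the Möbius-reduction step of the second paragraph: one must check that the $q\in\P^1(\Q)$ chosen to destroy the translation-symmetry of $R$ simultaneously keeps $\deg\widetilde A=4$, preserves integrality, and — when $|R|\ge 3$ — that the stabilizer of $R$ really is finite, so the argument sits at the interface between the projective geometry of the branch locus and the Galois action on it. The remaining rigid configurations, where no Möbius map can help (precisely $|R|\le 2$), fall into the cases already handled by the conic-bundle construction, by the triviality exclusion, or by the explicit biquadratic computation, so that beyond Theorem~\ref{4thmA}, the conic-bundle theorem, and the elementary range $\deg A\le 3$ no further ingredient is needed.
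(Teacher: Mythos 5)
Your proof is correct but takes a genuinely different route from the paper's. The paper invokes Iskovskikh's classification of minimal models: it rules out the degree-1 del Pezzo case via Lemma~\ref{thmDP1} (since $j=1728$ forces singular fibers of type $I_0^*$, $III$ or $III^*$, never $II$ or $I_1$), handles conic bundles and del Pezzo surfaces of degree $\ge 3$ via Koll\'ar--Mella and Segre--Manin, and settles the remaining degree-2 del Pezzo case by exhibiting three explicit sections and appealing to the Salgado--Testa--V\'arilly-Alvarado criterion. You instead stay with the Weierstrass model $y^2=x^3+A(T)x$ throughout, and the engine is your M\"obius-orbit reduction: when the root locus $R\subset\P^1_{\overline\Q}$ of $A$ has at least three points, $\mathrm{Stab}_{\mathrm{PGL}_2}(R)$ is finite, so one can conjugate by a $\Q$-rational M\"obius map to place $\infty$ away from all fixed points of $R$-stabilizing involutions and thereby force Theorem~\ref{4thmA} to apply; the residual rigid configurations $|R|\le 2$ (where the stabilizer is a positive-dimensional torus-or-$O(2)$ and the trick fails) are disposed of by hand — $|R|=1$ being trivial, multiplicity $(2,2)$ reducing to the conic-bundle theorem with $\alpha=\beta$, and multiplicity $(3,1)$ being directly checked non-biquadratic via the linear coefficient $8a^3c\ne 0$. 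This is a genuine new ingredient absent from the paper; in particular it cleanly resolves the biquadratic case with $\sqrt{a_2^2-4a_4a_0}\notin\Q$ (the exception flagged in the remark after Theorem~\ref{4thmA}), which Theorem~\ref{4thmA} together with the conic-bundle theorem (as stated, requiring $\alpha,\beta\in\Q$) do not cover on their own. Your route is more elementary and constructive — no del Pezzo theory, no STVA criterion, no Iskovskikh classification — at the price of explicit casework on $|R|$ and some care that the M\"obius substitution preserves degree~$4$, integrality up to clearing denominators, and $\Q$-isomorphism class, all of which you correctly flag. Both arguments are valid; yours arguably yields a more self-contained proof, while the paper's is shorter given the geometric machinery it imports.
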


\begin{proof}
Let $\E$ be an isotrivial rational elliptic surface with $j$-invariant $j(T)=1728$.

Recall Theorem \ref{Isk} due to Iskovskikh that says that a rational elliptic surface has a minimal model which is either
 a conic bundle of degree 1
 or a del Pezzo surface.

Let $X$ be a minimal model of $\E$. As a corollary of Lemma \ref{thmDP1}, $X$ is never a del Pezzo surface of degree 1. Indeed, the discriminant of $\E$ is $\Delta(T)=-2^6\cdot A(T)^3$ so every fiber at a factor of $A(T)$ has a reduction of Kodaira type $III$, $I_0^*$ or $III^*$, and such singular fibers are reducible.

In the case where $X$ is a conic bundle of degree 1, \cite{KollMell} show unirationality of $X$, and thus the density of its rational points. Therefore, it is also the case for $\E$.
In the case where $X$ is a del Pezzo surface of degree $\geq3$, \cite{Manin} shows unirationality of $X$ and $\E$.

We only have to consider the case where $X$ is a del Pezzo surface of degree 2. In this case, we have the following sections on $\E$ :
\begin{enumerate}
\item The section of the points at infinity $[0,y,0,0]$.
\item The section of Proposition \ref{ptnontorsion} $[x,\frac{-b}{u^2},\frac{1}{u},1]$ where $u=0$, $b=\sqrt{a_4x}$ and $x\in a_4\Q^{*2}$.
\item The section $[0,0,t,1]$.
\end{enumerate}

If the contraction two of them gives a del Pezzo surface of degree 2, then the image of the third is a rational curve. If it is an exceptional curve, we can contract is to obtain a del Pezzo of degree 3, on which the rational point are dense. If the image is not an exceptional curve, it allows all the same to find an infinity of points on $X$. Therefore, some of them are not on an exceptional curve nor on a distinguished quartic. We can thus use the work of Salgado, Testa and V\'arilly-Alvarado \cite{STVA} which shows unirationality and density of rational points on $X$ and $\E$. 
\end{proof}

\section{Root number on the fibers of isotrivial rational elliptic surfaces}\label{rootnumberIRES}

As seen in Section \ref{rootnumber}, the root number is conjecturally equal to the parity of the geometric rank. It can thus be used as a substitute and becomes useful in the study of the density of rational point, especially when no geometric argument is known.

Let $\E$ be an isotrivial elliptic surface\footnote{The non-isotrivial case is already studied in \cite{Desjardins1}}. We study here the variation of the root number of the fibers $\E_t$, more precisely the cardinality of the sets $$W_\pm(\E)=\{t\in\Q \ \vert \ W(\E_t)=\pm1\}.$$

If $\sharp W_-=\infty$, we can conclude the density of the rational points conditionally to the parity conjecture. 

We restrict ourselves to specific surfaces of quartic (j-invariant $=$ $1728$) and sextic twists ($j=0$), since the quadratic twist case is detailled in another paper \cite{Desjardins3}.

\subsection{Case $j(T)=0$}\label{IRES0root}

Let $\E$ be a rational elliptic surface described by the Weierstrass equation
$$\E : y^2=x^3+f(T),$$
where $f\in\Q[t]$ is such that $\deg f\leq 6$ and is sixth-power free.

A general geometric argument to show the density of the rational points, like those presented in the previous sections, is still not known. However, there are partial results. In \cite{Ulas} and \cite{Ulas2}, Ulas gives various conditions for the rational points on $\E^f$ to be dense: 1) when $\E^f$ is related to a del Pezzo surface of degree 1 and that a certain section on $\E^f$ is non-torsion, 2) when $f$ is a monic polynomial of degree six and $f$ is not even. Jabara generalizes this second Ulas' work in \cite[Theorem C]{Jabara} and treats the case with $f(T)$ monic.

%As illustrated by Cassels and Schinzel \cite{CS} and Varilly-Alvarado \cite{VA}, there exists isotrivial elliptic surfaces with constant root number on the fibers of either values $+1$ and $-1$.  

We chose to study a surface given by an equation of the form \begin{equation}\label{twistssextiques}y^2=x^3+AT^6+B\end{equation} as a sequel of \cite[Theorem 2.1]{VA} which shows that the variation of the root number of the fibers of a rational elliptic surface of the form $$y^2=x^3+F(T)$$ where $F$ has a primitive factor $f_i$ such that $\mu_3\not\subseteq \Q[T]/f_i$ where $\mu_3$ is the group of the third roots of unity. 
A natural example of a polynomial not obeying this condition is \begin{equation}\label{formepreferee}F(T)=C(3A^2T^6+B^2).\end{equation}
Our Theorem \ref{conditions0} is thus the natural continuation of the work of V\'arilly-Alvarado, in particular of \cite[Theorem 1.1]{VA}.

In broad terms, the proof used in V\'arilly-Alvarado's article is based on the fact that the root number of the fiber $\E_{t=\frac{m}{n}}$, $m,n\in\Z$ coprime, is given by the formula (\cite[Prop. 4.8]{VA}) : 
\begin{equation}\label{formulern}W(\E_t)=-R(t)\prod_{p^2\mid F(m,n)\atop p\geq5}\begin{cases}
1&\text{if }v_p(F(m,n))\equiv0,1,3,5\mod6\\
\left(\frac{-3}{p}\right)&\text{if }v_p(F(m,n))\equiv2,4\mod6\end{cases}\end{equation}
where
$$R(t)=W_2(\E_t)\left(\frac{-1}{F(m,n)_2}\right)W_3(\E_t)(-1)^{v_2(F(m,n))},$$
where $F(m,n)=F(\frac{m}{n})n^{\deg F}$. It relies essentially on making the product over $p\geq5$ vary.
Families of sextic twists with $F(T)$ of the form (\ref{formepreferee}) have the property that whenever $p\mid F(m,n)$ and $p\nmid C$ for $p\geq5$ then
$$F(m,n)=C(3A^2m^6+B^2n^6)\equiv0\mod p$$ and thus $$\left(\frac{Bn^3}{Am^3}\right)^2\equiv-3\mod p,$$
forcing the terms in the product in the formula (\ref{formulern}) to be always equal to $+1$ except maybe for $p\mid C$.
This allows to prove the following:

\begin{thm}\label{conditions0}
Let $\E_{A,B,C}$ be an elliptic surface described by the Weierstrass equation $$\E_{A,B,C}:y^2=x^3+aT^6+b,$$
where $a,b\in\Z$.

Then, the function of the root number of the fibers is constant, except for the surfaces of the form $\E_{A,B,C}:y^2=x^3+C(3A^2T^6+B^2)$ such that the integers $A,B$ coprime and $C$ fulfill one of the conditions of Lemma \ref{lemmej0w3}, and one of Lemma \ref{lemmej0w2}.
\end{thm}

\begin{proof}
Put $C=\pgcd(a,b)$.
If $3a/b$ is not a rational square, then by \cite[Thm 2.1]{VA} the two sets $$W_{\pm}=\{t\in\P^1\ \vert \ W(\E_t)=\pm1\}$$ are both infinite, or in other words, the root number of the fibers of $\E$ is non-constant. Suppose thus $3A/B$ is a rational square, that i.e. there exists $A,B\in\Z$ such that $3A^2=\frac{a}{C}$ and $B^2=\frac{b}{C}$.

For each $t\in\Q$, let $(m,n)$ be the pair of coprime integers such that $t=\frac{m}{n}$, and let $\E_{m,n}$ denote the elliptic curve $\mathscr{E}_{m,n}: y^2=x^3+C(3A^2m^6+B^2n^6),$ which is isomorphic to $\E_t$. Observe that $\E_{m,n}$ and $\E_t$ must then have the same root number.
Put $P(m,n)=C(3A^2m^6+B^2n^6)$.

%Put
%\begin{displaymath}
%P(m,n) = C(3a^2m^6+b^2n^6)=2^{v_2(\delta)}3^{v_3(\delta)}p_1^{e^1}...p_n^{e^n}=2^{v_2(\delta)}3^{v_3(\delta)}d_1(d_2)^2
%\end{displaymath}
%where $d_1=\prod_{e_i\text{ impair}}{p_i^{e^i}}$ and $d_2=\prod_{e_i\text{ pair}}{p_i^{e^i/2}}$. As $\mathscr{E}$ is of the form $E_\delta: y^2=x^3+\delta$, the root number can be written as
%\begin{displaymath}
%W(E_t)=-W_2(E_t)W_3(E_t)\big(\frac{-1}{d_1}\big)\big(\frac{-3}{d_2}\big).
%\end{displaymath}
%We have $\Big(\frac{-1}{d_1}\Big)=\Big(\frac{-1}{\delta_2}\Big)(-1)^{v_3(\delta)}$, where $\delta_2$ is the integer such that $\delta=2^{v_2(\delta)}\delta_2$.
Thereafter, we will use the following notations to put together similar terms in the formula (\ref{formulern}):

$$\omega_2(t):=W_2(\E_{t})\left(\frac{-1}{P(m,n)_2}\right)\qquad\text{and }\omega_3(t):=W_3(\E_{t})(-1)^{v_3(P(m,n))}.$$ and
$$\mathscr{P}(t):=\prod_{p^2\mid\alpha\atop p\geq5}\begin{cases}
1&\text{if }v_p(P(m,n))\equiv0,1,3,5\mod6\\
\left(\frac{-3}{p}\right)&\text{if}v_p(P(m,n))\equiv2,4\end{cases}$$

%We study the variation of $\mathscr{P}$, $w_2$ and $w_3$ (as the values of those different parts of the root number formula) according to the $2$- and $3$-adic valuations of $a$, $b$ and $c$, the values of $c_2 \mod 4$ and $c_3\mod 9$ and the factorisation in prime numbers of $c$. The aim is determining the cases where these functions are constant. This will give the global behavior of the root number function. 

First, note that for any choice of $A,B,C$, the function $\mathscr{P}(t)$ is a constant. Indeed, for any prime $p$ dividing a certain value $3A^2m^6+B^2n^6$, one has
$$\left(\frac{Bn^3}{Am^3}\right)^2\equiv-3\mod p.$$
We thus have $\mathscr{P}=(-1)^{\sigma}$,
where
\begin{displaymath} 
\sigma=\#\{p\text{ such that }p^2\vert C\text{, }v_p(C)\equiv2,4\mod6\text{ and }p\equiv2\mod3\},
\end{displaymath}

Moreover, note that the three functions are independent to each other namely:
\begin{description}\item[the function $\omega_2$] depends on $v_2(P(m,n))\mod4$ and $P(m,n)_2\mod4$
\item[the function $\omega_3$] depends on $v_3(P(m,n))\mod6$, and $P(m,n)_3\mod 9$.
\end{description}

Therefore, if one of the values $\omega_2$ or $\omega_3$ non-constant, then the global root number is non-constant.% In the case where one of $w_2$ or $w_3$ is fixed and the other varies, there is necessarily a variation of the root number. Moreover, when the two values varies, these variations can not happen always at the same time (and thus the elliptic surface can not have a constant root number in this case).

This proves that the root number is non constant, except for the surfaces such that $A,B,C$ fulfill one of the conditions of Lemma \ref{lemmej0w2}, and one of Lemma \ref{lemmej0w3}.

\end{proof}

\begin{rem}
The independance of $\omega_2$ and of $\omega_3$ is also given by the Helfgott's formula for the average root number \cite[Proposition 7.2]{Helfgott}.
\end{rem}

This allows us to compute the value of the constant root number in each of the special cases.

\begin{ex}
Suppose that $A=B=1$.  
Let $\E_{1,1,C}$ be the elliptic surface defined by the equation $$\E:y^2=x^3+C(T^6+1).$$
By looking at the tables \ref{constantplus3}, \ref{constantmoins3}, \ref{constantplus2} and \ref{constantmoins2},  we have that the function $\omega_3(t)$ is constant when $t$ runs through $\Q$ if and only if
\begin{enumerate}
\item (for $\omega_3(t)=-1$)
\begin{enumerate}
\item$v_3(C)\equiv0\mod6$ and $C_3\equiv1\mod9$,
\item$v_3(C)\equiv2\mod6$ and $C_3\equiv2\mod9$, 
\item or $v_3(C)\equiv5\mod6$ and $C_3\equiv2,8\mod9$,
\end{enumerate}
\item (for $\omega_3(t)=+1$)\begin{enumerate}
\item $v_3(C)\equiv2\mod6$ and $C_3\equiv1,7\mod9$ 
\item $v_3(C)\equiv3\mod6$ and $C_3\equiv8\mod9$
\item $v_3(C)\equiv5\mod6$ and $C_3\equiv7\mod9$
\end{enumerate}\end{enumerate}
and that the function $\omega_2(t)$ is constant when $t$ runs through $\Q$ (and $\omega_2(t)=+1$) if and only if $v_2(C)\equiv1,3,5\mod 6$.

%Indeed, one has $\sigma=0$ (even), $v_2(C)=v_2(A)=v_2(B)=0$, $C_2\equiv3\mod4$ (hence $\E$ appears in Table \ref{constantmoins2}: $\omega_2(m,n)=-1$), $v_3(C)=1$, $v_2(A)=v_2(B)=0$ and $C_3\equiv1\mod3$ (hence $\E$ appears in Table \ref{constantplus3}: $\omega_3(m,n)=+1$). Therefore, one has
%$$W(\E_t)=-\omega_2(n,m)\omega_3(m,n)(-1)^{\sigma}=-(-1)(+1)(-1)^{0}=+1.$$

If we suppose that $C$ is less or equal to $100$, we find only the following values for which the root number is constant:
\begin{itemize}
\item if $C=10,18,46,82$ the root number is -1.
\item if $C=90$ the root number is +1.
\end{itemize}

%Taking the same properties for $A,B,C$ except that $C\equiv2\mod3$, one obtain a root number always negative.Therefore, on the elliptic surface $\E'$ defined by the equation $$\E':y^2=x^3+33(T^6+1),$$ one has
%$$W(\E_t)=-(-1)(-1)(-1)^0=-1.$$

When the surface has negative root number on every non-singular fiber, the parity conjecture states that the rank of the fibers of this elliptic surface should be always positive. For the surfaces on which the root number is $+1$, however, it is not possible to conclude anything about the density of rational points from the study of the variation of the root number.

In the case of $C=90$, the surface $\E_{1,1,90}$ has no section defined over $\Q$, and so as far as we know the density of the rational points is still an open question.
\end{ex}

\subsection{Case $j(T)=1728$}\label{IRES1728root}

The density of rational points on certain elliptic surfaces of the form $\E:y^2=x^3+g(T)x$ is garanteed by the construction of a section for $\E$ done in Section \ref{constructionsection}. However, there are surfaces such that this section is not of infinite order. This happens in particular when $g(T)=AT^4+B$. This case fails as well to satisfy the hypotheses of \cite[Theorem 2.3]{VA} and it is thus possible that the root number is constant. By the parity conjecture, an elliptic surface with constant root number always equal to $+1$ is such that every fiber has even rank, thus although the following result doesn't give new density result, it still give us some interesting (conditional) information about the distribution of the rank in the family of the fibers.

\begin{thm}\label{conditions1728}
Let $\mathscr{F}_{A,B,C}$ be an elliptic surface represented by the Weierstrass equation $$\mathscr{F}_{A,B,C}:y^2=x^3+C(A^2T^4+B^2)x,$$
where $A,B,C\in\Z$ and $\gcd(A,B)=1$.

Then, the function $t\rightarrow W(\E_t)$ of the root number of the fibers is constant, except for the specific surfaces such that $A,B,C$ fulfill one of the conditions of Lemma \ref{j1728w2}, and one of Lemma \ref{j1728w3}.
\end{thm}

\begin{proof}
Let $A,B,C\in\Z$ be such that $\gcd(A,B)=1$. Let us write $\mathscr{F}=\mathscr{F}_{A,B,C}$.
For each $t\in\Q$, let $(m,n)$ be the pair of coprime integers such that $t=\frac{m}{n}$, and let $\mathscr{F}_{m,n}$ denote the elliptic curve $\mathscr{F}_{m,n}: y^2=x^3+C(3A^2m^4+B^2n^4)x,$ isomorphic to $\mathscr{F}_{t}$.

Put $P(m,n)=C(3A^2m^4+B^2n^4)$.
It is not very difficult to see that the root number is given by the formula\footnote{This formula is shown in \cite{Desjardins3}.} $$W(\mathscr{F}_t)=-W_2(t)W_3(t)\left(\frac{-2}{P(m,n)_2}\right)\prod_{p^2\mid P(m,n)\atop p\geq5}\begin{cases}
1&\text{if }v_p(P(m,n))\equiv0,1,3,5\mod6\\
\left(\frac{-1}{p}\right)&\text{if}v_p(P(m,n))\equiv2,4\end{cases}.$$

%Put
%\begin{displaymath}
%P(m,n) = C(3a^2m^6+b^2n^6)=2^{v_2(\delta)}3^{v_3(\delta)}p_1^{e^1}...p_n^{e^n}=2^{v_2(\delta)}3^{v_3(\delta)}d_1(d_2)^2
%\end{displaymath}
%where $d_1=\prod_{e_i\text{ impair}}{p_i^{e^i}}$ and $d_2=\prod_{e_i\text{ pair}}{p_i^{e^i/2}}$. As $\mathscr{E}$ is of the form $E_\delta: y^2=x^3+\delta$, the root number can be written as
%\begin{displaymath}
%W(E_t)=-W_2(E_t)W_3(E_t)\big(\frac{-1}{d_1}\big)\big(\frac{-3}{d_2}\big).
%\end{displaymath}
%We have $\Big(\frac{-1}{d_1}\Big)=\Big(\frac{-1}{\delta_2}\Big)(-1)^{v_3(\delta)}$, where $\delta_2$ is the integer such that $\delta=2^{v_2(\delta)}\delta_2$.
Thereafter, we will use the following notations to congregate similar terms:

$$\omega_2(t):=W_2(\mathscr{F}_{m,n})\left(\frac{-2}{P(m,n)_2}\right)\qquad\text{and }\qquad\omega_3(t):=W_3(\mathscr{F}_{m,n})$$ and
$$\mathscr{P}(t):=\prod_{p^2\mid P(m,n)\atop p\geq5}\begin{cases}
1&\text{if }v_p(P(m,n))\equiv0,1,3,5\mod6\\
\left(\frac{-1}{p}\right)&\text{if }v_p(P(m,n))\equiv2,4\mod6\end{cases}$$

%We study the variation of $\mathscr{P}$, $w_2$ and $w_3$ (as the values of those different parts of the root number formula) according to the $2$- and $3$-adic valuations of $a$, $b$ and $c$, the values of $c_2 \mod 4$ and $c_3\mod 9$ and the factorisation in prime numbers of $c$. The aim is determining the cases where these functions are constant. This will give the global behavior of the root number function. 

First, note that for any choice of $A,B,C$, the function $\mathscr{P}(t)$ is a constant.  Indeed, for any prime $p$ dividing a certain value $A^2m^4+B^2n^4$, one has
$$\left(\frac{Bn^2}{Am^2}\right)^2\equiv-1\mod p.$$ We have thus $\mathscr{P}=(-1)^{\sigma}$,
where
\begin{displaymath} 
\sigma=\#\{p\text{ such that }p^2\vert C\text{, }v_p(C)\equiv2,4\text{ and }p\equiv1\mod4\},
\end{displaymath}

Moreover, note that the three functions depends on independent parameters, namely:
\begin{description}\item[the function $\omega_2$] depends on $v_2(P(m,n))\mod4$ and $P(m,n)_2\mod4$
\item[the function $\omega_3$] depends on $v_3(P(m,n))\mod6$, and $P(m,n)_3\mod 9$.
\end{description}

Therefore, if one of the values $\omega_2$ or $\omega_3$ non-constant, then the global root number is non-constant.% In the case where one of $w_2$ or $w_3$ is fixed and the other varies, there is necessarily a variation of the root number. Moreover, when the two values varies, these variations can not happen always at the same time (and thus the elliptic surface can not have a constant root number in this case).

Therefore, the root number is non constant, except for the surfaces such that $A,B,C$ fulfill one of the conditions of Lemma \ref{lemmej0w2}, and one of Lemma \ref{lemmej0w3}.

\begin{comment}
Let $\E$ be an isotrivial elliptic surface of the form $$\mathscr{E}:y^2=x^3+C^2(A^2m^4+B^2n^4),$$ where $A,B,C\in\mathbb{Z}$ and $(A,B)=1.$
The root number at $t$ of this surface is given by the formula
$$W(\mathscr{E}_t)=-W_2(\mathscr{E}_t)W_3(\mathscr{E}_t)\left(\frac{-2}{t_1}\right)\left(\frac{-1}{\tau_2}\right),$$ where $t_1$ and $\tau_2$ are as defined in this theorem. Put $\delta=A^2m^4+B^2n^4$. 
Observe that $\Big(\frac{-2}{t_1}\Big)=(\frac{-2}{t'})$, where $t'$ is the integer such that $t'=2^{v_2(t)}t'$.  
Hereafter, we will use the following notations:
$\mathscr{P}(t):=\left(\frac{-1}{t_2}\right)$, $\omega_2(t):=W_2(E_\delta)\left(\frac{-2}{t'}\right)$ and $\omega_3(t):=W_3(E_t)$.

The variation of $\mathscr{P}$, $\omega_2$ and $\omega_3$, the different parts of the formula for the root number, according the $2$- and $3$-adic valuation of $a$, $b$ and $c$, the values of $c_2 \mod 16$ and $c_3\mod 3$ and the factorisation in prime numbers of $c$, allows to distinguish the cases where these components are constant. 

First, observe that for all $A$, $B$ and $C$, the value of $\mathscr{P}(t)$ is constant on every fibers. Explicitely, we have $$\mathscr{P}=(-1)^{\sigma},$$ where
\begin{displaymath}
\sigma=\#\{p\text{ such that }p^2\vert c\text{ and }p\equiv3\mod4\}.
\end{displaymath}
Lemma \ref{j1728w2} gives conditions for $\omega_2$ to be constant and Lemma \ref{j1728w3} gives conditions for $W_3$ to be constant. Combining those results, we obtain the conditions and conclusion of the theorem.
\end{comment}
\end{proof}

\begin{ex}

Suppose that $A=B=1$. Let $\mathscr{F}_{1,1,C}$ be the elliptic surface given by the equation $$\mathscr{F}_{1,1,C}:y^2=x^3+C(T^4+1)x.$$
By looking at the formula of Lemma \ref{j1728w3} as well as Tables \ref{xconstantplus2} and \ref{xconstantmoins2}, we find that the function $\omega_3(t)=W_3(t)$ is always constant when $t$ runs through $\Q$ and its values is
\begin{enumerate}
\item $W_3(\E_t)=+1$ if $v_3(C)\equiv1,2,3\mod 4$
\item $W_3(\E_t)=-1$ if $v_3(C)\equiv0\mod4$,
\end{enumerate}
and the function $\omega_2(t)$ is constant and equal to $-1$ if and only if
\begin{enumerate}
\item $v_2(C)\equiv0\mod4$ and $C_2\equiv1,7,9,11\mod16$
\item $v_2(C)\equiv1\mod4$ and $C_2\equiv3\mod8$
\item $v_2(C)\equiv2\mod4$ and $C_2\equiv5,7,9,15\mod16$
\item $v_2(C)\equiv3\mod4$ and $C_2\equiv5\mod8$ 
\end{enumerate}
This makes quite a lot of possibilities for $\mathscr{F}_{1,1,C}$: for $C\leq20$ we have the following:
\begin{enumerate}
\item the root number of every fiber is $+1$ if $C=1,6,7,11,16,17$
\item the root number of every fiber is $-1$ if $C=9,20$.
\end{enumerate}
However, the density of rational points holds all the same on every surface $\mathscr{F}_{A,B,C}$ regardless of the variation of the root number by Theorem \ref{density1728}.
\end{ex}

%\begin{ex}
%The surface $$\E:y^2=x^3+7(9T^4+25)x$$
%has root number equal to $-1$ on every fiber at $t\in\Q$.

%The surface $$\E:y^2=x^3+11(9T^4+25)x,$$
%has root number equal to $+1$ for every fiber at $t\in\Q$.
%\end{ex}
\section{Non-isotrivial rational elliptic surfaces}\label{noniso}

\subsection{Known results}

In the ph.D thesis of the author \cite{Desjardinsthese} and in \cite{Desjardins1}, we prove the following theorem. This work is based on a preprint of Helfgott \cite{Helfgott}, revisited and completed with a different approach.

\begin{thm}\label{inconditionnelles}Let $\mathscr{E}$ be a rational elliptic surface given by the equation
\begin{displaymath}
\mathscr{E}: y^2=x^3+F(u,1)x+G(u,1),
\end{displaymath}
where $F$ and $G$ are homogeneous polynomials of degree respectively 4 and 6 defining a minimal model. We suppose that $\E$ is non-isotrivial, and thus in particular $FG\not=0$. Define the two following polynomials associated to $\E$: 
\begin{itemize}\item $\Delta_{\E}(U,V)=4F(U,V)^3+27G(U,V)^2=\prod^{s}_{i=0}{P_i^{m_i}(U,V)}$ (the homogeneous discriminant of $\E$) 
\item and $M_\E(U,V)=\prod_{i\in\mathscr{M}_\E}{P_i(U,V)}$ where $\mathscr{M}_\E=\{i$ such that $P_i\nmid F\}$ (the product of polynomials coming from places of multiplicative reduction).\end{itemize}

Suppose that every $P\mid\Delta_\E$ verifies the squarefree conjecture and every $P\mid M_\E$ verifies Chowla's conjecture.

Then the sets $W_\pm$ are both infinite.
\end{thm}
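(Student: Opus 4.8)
The plan is to follow Helfgott's analysis of the variation of the root number \cite{Helfgott}. Fix the minimal Weierstrass model $y^2=x^3+F(u,1)x+G(u,1)$ and write, for coprime integers $u,v$, $W(\E_{u/v})=W_\infty\prod_p W_p(\E_{u/v})$ with $W_\infty=-1$. Let $S$ be the finite set of primes consisting of $2$, $3$, the primes dividing the leading coefficients of $F$, $G$ and $\Delta$, and the primes dividing the resultants $\operatorname{Res}(P_i,P_j)$ ($i\neq j$) and $\operatorname{Res}(P_i,F)$. Choosing exponents $k_p$ large enough (the Halberstadt--Rizzo tables at $p=2,3$, Rohrlich's formula at the other $p\in S$) and restricting $(u,v)$ to a fixed residue class modulo $N=\prod_{p\in S}p^{k_p}$, the product $\prod_{p\in S}W_p(\E_{u/v})$ equals a fixed sign $\varepsilon\in\{\pm1\}$, and $W_p(\E_{u/v})=+1$ whenever $p\notin S$ and $p\nmid\Delta(u,v)$. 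So everything is governed by the primes $p\notin S$ of bad reduction.

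For such $p$, Rohrlich's formula applies: $\E_{u/v}$ has multiplicative reduction at $p$ exactly when $p\mid M_\E(u,v)$ (this is where one uses $P_i\nmid F$ for $i\in\mathscr{M}_\E$), and then $W_p(\E_{u/v})=-\left(\frac{-c_6(u,v)}{p}\right)$; it has additive reduction at $p$ exactly when $p$ divides some $P_i(u,v)$ with $P_i\mid F$. Now I invoke the squarefree hypothesis: applying it to each $P_i\mid\Delta$ and sieving away the bounded $S$-parts (a standard combination of the individual statements, using that the $P_i$ are pairwise coprime off $S$), one gets a positive-density subset $\mathcal T$ of the residue class on which the prime-to-$S$ part $P_i(u,v)_S'$ of every $P_i(u,v)$ is squarefree. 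On $\mathcal T$ the minimal discriminant of $\E_{u/v}$ at each $p\notin S$ has the generic valuation, so the Kodaira type there is the one of $\E$ along the corresponding $P_i$; hence each additive place contributes $W_p$ equal to a constant symbol $\left(\frac{\alpha_i}{p}\right)$, $\alpha_i\in\{-1,-2,-3\}$ (for types $II,III,IV,IV^*,III^*,II^*$), or $\left(\frac{\beta_i(u,v)}{p}\right)$ for a fixed polynomial $\beta_i$ (for type $I_m^*$). Multiplying over the relevant $p$, using multiplicativity of the Jacobi symbol and squarefreeness of the $P_i(u,v)_S'$, I obtain on $\mathcal T$
\[
W(\E_{u/v})=\varepsilon'\cdot\mu\!\big(M_\E(u,v)_S'\big)\cdot\chi(u,v),
\]
where $\varepsilon'\in\{\pm1\}$ is fixed, $\mu$ is the Möbius function, and $\chi(u,v)=\left(\frac{G(u,v)}{M_\E(u,v)_S'}\right)\prod_{P_i\mid F,\ \mathrm{type}\,I^*}\left(\frac{\beta_i(u,v)}{P_i(u,v)_S'}\right)\in\{\pm1\}$ is a product of Jacobi symbols in values of fixed polynomials (the constant symbols $\left(\frac{\alpha_i}{\,\cdot\,}\right)$, the factor $\left(\frac{-864}{M_\E(u,v)_S'}\right)$, and the quadratic-reciprocity sign-factors are all constant on the residue class and absorbed into $\varepsilon'$; the factor $(-1)^{\#\{p\mid M_\E(u,v),\,p\notin S\}}$ has become $\mu(M_\E(u,v)_S')$ by squarefreeness).

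It remains to show $W(\E_{u/v})$ takes both values $\pm1$ for infinitely many $(u,v)\in\mathcal T$. Since $\mathcal T$ has positive density and $W(\E_{u/v})\in\{\pm1\}$ for all but finitely many $(u,v)\in\mathcal T$, it suffices to prove $\sum\mathbf 1_{\mathcal T}(u,v)\,W(\E_{u/v})=o\!\big(\#\mathcal T\cap\text{box}\big)$ over $(u,v)$ in a large box. By the displayed formula this sum is $\sum\mathbf 1_{\mathcal T}(u,v)\,\chi(u,v)\,\mu(M_\E(u,v)_S')$, and this is exactly where Chowla's hypothesis for the $P_i\mid M_\E$ enters: it provides the cancellation of the Möbius factor against the structured weight $\mathbf 1_{\mathcal T}\cdot\chi$, whence the sum is $o$ of the box size. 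Therefore $W(\E_{u/v})=+1$ on infinitely many $(u,v)$ and $=-1$ on infinitely many $(u,v)$, i.e. $\#W_+(\E)=\#W_-(\E)=\infty$.

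The main obstacle is this last step: decoupling the oscillating Möbius factor from the structured twist $\chi$ and from the squarefree indicator $\mathbf 1_{\mathcal T}$, i.e. checking that the Chowla-type cancellation genuinely survives restriction to the congruence class, to the positive-density squarefree set, and multiplication by the quadratic character $\chi$ — this is the analytic heart of Helfgott's argument. A secondary point, independent of the conjectural input, is to verify that the right-hand side of the displayed formula is not identically constant; this is precisely where non-isotriviality of $\E$ (beyond $FG\neq0$) is needed, guaranteeing either a fiber of multiplicative reduction (a non-trivial Möbius factor) or a fiber of type $I_m^*$ with non-constant $\beta_i$ (a non-trivial $\chi$). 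The remaining work — the precise choice of $S$ and the exponents $k_p$, the exact local root-number formulas with their reciprocity corrections, and the form of the $\beta_i$ — is routine but lengthy bookkeeping.
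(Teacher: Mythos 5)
The paper does not prove Theorem~\ref{inconditionnelles} itself: it states the result and attributes the proof to Helfgott's preprint \cite{Helfgott}, as reviewed in the author's thesis \cite{Desjardinsthese}, and no argument is given in the text. Your sketch is a faithful reconstruction of the shape of that argument: pin down the contribution from $p\in S$ on a fixed congruence class, use Rohrlich's local formulas together with the squarefree hypothesis on the $P_i\mid\Delta$ to collapse the root number to $\varepsilon'\cdot\mu\bigl(M_\E(u,v)'\bigr)\cdot\chi(u,v)$ with $\chi$ a product of Jacobi symbols in fixed polynomials, and then invoke the Chowla hypothesis on $M_\E$ to force sign changes. You also correctly identify where non-isotriviality enters (to rule out $\mu\cdot\chi$ being identically constant) and honestly flag the analytic decoupling of the M\"obius weight as the real content, deferring it to Helfgott rather than pretending to supply it. So the proposal takes essentially the same approach as the (cited) proof; like the paper, it is ultimately a reduction to \cite{Helfgott} rather than a self-contained argument, but as a sketch it is sound and does not misstate what the conjectural inputs are doing.
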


This means that $\Delta_\E$ needs to verify the squarefree conjecture, and that $M_\E$ needs to verify Chowla's conjecture. Those conjectures are known to hold in the following cases:

\begin{thm}
Let $P\in\Z[X,Y]$ be a homogeneous polynomial.
\begin{enumerate}
\item (Greaves \cite{Greaves}) The squarefree conjecture holds if $\deg P_i \leq 6$.
\item (Helfgott \cite{HelfChowla}, Lachand \cite{Lachandthese}) Chowla's conjecture holds if $\deg P\leq3$ or (Green-Tao \cite{GT}) if $P$ is a product of linear factors;
\end{enumerate}
\end{thm}

The following proposition classifies all the rational elliptic surfaces on which Theorem \ref{inconditionnelles} is unconditional.
\begin{propo}\label{DP1inconditionnellesHelfgott}

Let $\E$ be a non-isotrivial rational elliptic surface given by the equation:
$$\E:y^2=x^3+F(T,1)X+G(T,1),$$
where $F$ and $G$ are homogeneous polynomials 
of degree respectively $4$ and $6$.  

We suppose that $\E$ respects one of the following properties:
\begin{enumerate}
\item $\mathscr{M}=\emptyset$;
\item the places in $\mathscr{M}$ are all rational;
\item $\mathscr{M}=\{P\}$ where $P\in\Z[T]$ is a polynomial of degree 3;
\item $\mathscr{M}=\{P_1,P_2\}$ where $P_1,P_2\in\Z[T]$ are polynomials of degree respectively 1 and 2;
\item $\mathscr{M}=\{\frac{1}{T},P_2\}$ where $P_2\in\Z[T]$ is a polynomial of degree 2. 
\end{enumerate}

Then the sets $W_\pm$ are both infinite.
\end{propo}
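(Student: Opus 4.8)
The plan is to deduce the proposition directly from Theorem~\ref{inconditionnelles}. That theorem reduces the claim $\#W_+=\#W_-=\infty$ to the assertion that every irreducible factor $P_i$ of the discriminant $\Delta$ satisfies the squarefree conjecture and every irreducible factor of $M_\E$ satisfies Chowla's conjecture; and, by the discussion of the conditions A and B following the theorem, it is enough to check these in their \emph{homogeneous} form (version A), namely: every $P_i\mid\Delta$ has degree at most $6$, and $M_\E$ is either a product of linear forms or has degree at most $3$. So the whole proof is a degree count (we may, and do, take the Weierstrass model to be minimal as in Theorem~\ref{inconditionnelles}, which only decreases these degrees).

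First I would deal with the factors coming from additive places. If $P_i\mid F$, equivalently $i\notin\mathscr{M}$, then $F\ne 0$ (since $\E$ is non-isotrivial) and hence $\deg P_i\le\deg F\le 4\le 6$, so the homogeneous squarefree conjecture holds for $P_i$ unconditionally. The only factors of $\Delta$ that could cause difficulty are therefore the $P_i$ with $i\in\mathscr{M}$, i.e.\ those attached to the places of multiplicative reduction; for a general rational elliptic surface such a factor can have degree up to $12$ (a single fibre of type $I_1$ may be supported on as many as $12$ Galois-conjugate points, in which case $\Delta$ is an irreducible binary form of degree $12$, beyond the range where the squarefree conjecture is known). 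Controlling these factors, and with them $M_\E$, which is their product, is exactly the purpose of the five hypotheses.

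Next I would go through the five cases. In case (1) there are no multiplicative places and $M_\E=1$, so nothing remains. In case (2) every $P_i$ with $i\in\mathscr{M}$ is linear, hence satisfies the squarefree conjecture trivially, and $M_\E$ is a product of linear forms, to which the homogeneous Chowla conjecture applies. In cases (3), (4) and (5) the set $\mathscr{M}$ consists, respectively, of a single cubic, of one linear and one quadratic polynomial, and of the linear form attached to $T=\infty$ together with one quadratic; in each, every $P_i$ with $i\in\mathscr{M}$ has degree $\le 3\le 6$, so the homogeneous squarefree conjecture holds for it, while $\deg M_\E\le 3$, so the homogeneous Chowla conjecture applies. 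Combined with the previous paragraph, in all five cases every $P_i\mid\Delta$ has degree $\le 6$ and $M_\E$ meets the required condition, so the hypotheses of Theorem~\ref{inconditionnelles} are satisfied unconditionally and that theorem gives the conclusion.

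The only delicate point will be the bookkeeping in case (5): one must pass to the homogenised discriminant $\Delta_\E(u,v)=v^{12-\deg\Delta}\Delta(u/v)$ and note that the place $1/T$ is the linear form $v$, which divides $\Delta_\E$ but not the homogenisation of $F$, so that it genuinely lies in $\mathscr{M}$ and $M_\E=v\,P_2(u,v)$ has degree $3$ and not $2$; more generally, a place of bad reduction above $T=\infty$ always contributes a linear form and so is harmless for the squarefree part. Apart from this, the argument is only a matching of the degree thresholds of Theorem~\ref{inconditionnelles} against the combinatorial restrictions on $\mathscr{M}$, and I do not anticipate any substantial obstacle.
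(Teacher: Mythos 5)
Your reduction to Theorem~\ref{inconditionnelles} via a case-by-case degree count is correct and is the same strategy the paper pursues. Your key simplification --- that every irreducible factor of $\Delta$ outside $\mathscr{M}$ divides $F$ and hence has degree at most $4$, so only the $\mathscr{M}$-factors and $M_\E$ need checking against the thresholds of version~A --- is in fact tidier than the paper's treatment, which factors $F$ and $G$ through auxiliary polynomials $C,R,F_2,G_2$ and, as printed, does not actually carry the case analysis for $\mathscr{M}\neq\emptyset$ to completion; your write-up supplies the verification the published proof leaves implicit.
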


\begin{rem}
There are examples of rational elliptic surfaces of each of the case of the list.

When $\mathscr{M}=\emptyset$, the surface obtained by the contraction of the canonical section never is a del Pezzo surface of degree 1. Indeed, an elliptic surface with no place of multiplicative reduction admits automatically a place of potentially multiplicative reduction. In this case, Corollary \ref{thmDP1} gives us that $\E$ does not come from a degree 1 del Pezzo surface. Each of the four last classes of rational elliptic surfaces contains del Pezzo surfaces of degree 1.

\end{rem}

\begin{rem}
The geometric arguments presented in the section \ref{dPgeometrique} prove the density in certain cases on which it is not possible to apply unconditionnally the work of Helfgott.
In particular, Proposition \ref{cubique} requires that there exists a rational place of type $I_m^*$, $II^*$, $III^*$ $IV^*$ or $I_0^*$.
\end{rem}
\begin{proof} (of Proposition \ref{DP1inconditionnellesHelfgott})

Let $B_\E$ and $M_\E$ be the polynomials such that 
\begin{enumerate}
\item $B_\E$ is the product of the polynomials associated to the places of bad reduction of $\E$ that are not of type $I_0^*$,
\item $M_\E$ the product of the polynomials associated to the places of multiplicative reduction of $\E$.
\end{enumerate}

Theorem \ref{inconditionnelles} and the parity conjecture show the variation of the root number on the fibers when $\E$ is a non-isotrivial surface whose polynomial $B_\E$ and $M_\E$ are such that
\begin{enumerate}
\item $B_\E$ verifies the squarefree conjecture,
\item $M_\E$ verifies Chowla's conjecture.
\end{enumerate}

%It suffices to verify in which cases these hypotheses hold.

If these exists no place of multiplicative reduction on $\E$, we have $M_\E=1$. Thus there is no need to consider Chowla's conjecture. Moreover, the irreducible factors of $\Delta$ appear with the exponent $\geq2$. They are of degree $\leq6$. Therefore, squarefree conjecture holds.

Suppose now that $\E$ admits a place of multiplcative reduction on $\E$.

Let be the following minimal Weierstrass model for $\E$:
$$y^2=x^3+F(T,1)x+G(T,1),$$
where $F,G\in\Z[U,V]$ are homogeneous polynomials of degree $4$ and $6$ respectively.
Let $C$, be the largest primitive polynomial such that $C\mid F$ and $C^2\mid G$. We write $F=aCF_1$ and $G=bC^2G_1$ where $F_1$ and $G_1$ are primitive polynomial and $a,b\in\Z$ are constants.
Let $R:=\pgcd(F_1,G_1)$. Observe that the polynomial $R$ splits by construction. We write $F=aCRF_2$ and $G=bC^2RG_2$ where $F_2,G_2\in\Z[X,Y]$ are suitable polynomials.
The discriminant can be written $$\Delta=C^3R^2(4a^3RF_2^3+27b^2CG_2^2).$$
When the surface is non-isotrivial, if there exists $P$ a polynomial such that $P^4\mid F$, then $P^6\nmid G$, and thus $\mathrm{ord}_PC\leq3$.
Observe that $R$, $C$, $F_2$ and $G_2$ verify squarefree conjecture as their degrees are $\leq6$. 

We define $M_o=(4RF_2^3-27CG_2^2)$ and we observe that $\Delta=C^2R^3M_0$. 
The polynomial $M_o$ is a product of powers of polynomials associated to places of multiplicative or additive reduction.
It is possible that $M_o$ is divisible by the polynomials associated to places of additive reduction: the factors of $C$ or $R$. 
For $F=P_1^{\alpha_1}\dots P_r^{\alpha_r}$ (the decomposition of $F$ in irreducible factors) there exist integers $\beta_i\in\mathbb{N}$ such that $$M_1=\frac{M_o}{P_1^{\beta_1}\dots P_r^{\beta_r}}.$$

\end{proof}

%\begin{lm}\label{quellessontlesformes}
%Let $M$ be a homogeneous polynomial.

%Then $M$ satisfies the homogeneous version of Chowla's conjecture if
%\begin{enumerate}
%\item it is a product of linear polynomials ; 
%\item it is a product of a power of a linear polynomial and of an irreducible quadratic polynomial ; or 
%\item it is a polynomial of degree 3.
%\end{enumerate}
%\end{lm}

\subsection{Rational elliptic surfaces with no place of multiplicative reduction}\label{sectionsansIm}

\begin{propo}
Let $X$ be a non-isotrivial rational elliptic surface with no place of reduction of type $I_m$. Then $X$ can be described by one of the following equations:
\begin{equation}\label{propsurface1}
\E_1:y^2=x^3+aL_1^2Qx+bL_1^3QM,
\end{equation}
where $Q=\frac{cL_1^2-27b^2M^2}{4a^3}$; and
\begin{equation}\label{propsurface2}
\E_2:y^2=x^3+aL_1^2L_2L_3x+bL_1^3L_2^2L_3,
\end{equation}
where $L_1=4a^3L_3-27b^2L_2$.
We have $a,b\in\Z$, $L_1$, $L_2$, $L_3$ and $M$ linear polynomials and $Q$ a quadratic polynomial.

\end{propo}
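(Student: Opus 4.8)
The plan is to read off the possible configurations of Kodaira fibres of $X$ from the Euler‑number constraint together with the hypothesis, and then to solve the two Weierstrass equations that survive.

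\textbf{Step 1 (locating the $I_n^*$‑fibre).} Since $X$ is non‑isotrivial, its $j$‑map $\P^1_\Q\to\P^1$ is non‑constant, hence attains the value $\infty$; at such a place $X$ has potentially multiplicative reduction, so the Kodaira type there is $I_n$ or $I_n^*$ for some $n\ge1$. By hypothesis there is no fibre of type $I_m$, so this place carries an $I_n^*$, of Euler number $n+6\ge7$. As the Euler numbers of the singular fibres of a rational elliptic surface sum to $12$ (equivalently $\deg\Delta_\E=12$), there is exactly one such place; being the unique fibre of its type it is Galois‑stable, hence $\Q$‑rational, so after a coordinate change on $\P^1$ it is $\{L_1=0\}$ for a linear form $L_1$, and a minimal model $y^2=x^3+A(T)x+B(T)$ ($\deg A\le4$, $\deg B\le6$) satisfies $\mathrm{ord}_{L_1}A=2$, $\mathrm{ord}_{L_1}B=3$, $\mathrm{ord}_{L_1}\Delta=n+6$. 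Every remaining singular fibre has potentially good reduction and is not of type $I_m$ nor of type $I_0^*$ (an $I_0^*$ together with the $I_n^*$ would give Euler number $\ge13$), hence is of type $II$, $III$ or $IV$; and the standard lattice constraint for rational elliptic surfaces (the lattice of fibre components embeds into $E_8$, and an $I_n^*$ contributes a $D_{n+4}$) forces $n\le4$.

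\textbf{Step 2 (enumerating configurations).} Using the hypothesis in the form ``$M_\E=1$'', i.e.\ every irreducible factor of $\Delta$ divides $A$, I would write $A=L_1^2A'$, $B=L_1^3B'$ with $\deg A'\le2$, $\deg B'\le3$ and $L_1\nmid A'B'$, so that $\Delta=L_1^6(4A'^3-27B'^2)$ and $\mathrm{rad}\bigl((4A'^3-27B'^2)/L_1^{\,n}\bigr)$ divides $A'$; since $\deg A'\le2$ the singular fibres other than the $I_n^*$ lie over at most two points of $\P^1$. Combining this with the Euler accounting ($12-(n+6)=6-n$, a sum of summands in $\{2,3,4\}$ spread over at most two points) leaves exactly the configurations $\{I_1^*,II,III\}$, $\{I_2^*,2\cdot II\}$, $\{I_2^*,IV\}$, $\{I_3^*,III\}$ and $\{I_4^*,II\}$. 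The last three, in which the non‑$I_n^*$ fibres all sit over a single point $r$, I would exclude: there $\Delta$ is a constant times $L_1^{\,n+6}(T-r)^{6-n}$ and, by ``$\mathrm{rad}\mid A'$'' together with the type at $r$, $A'$ is a constant times a power of $(T-r)$; substituting into $27B'^2=4A'^3-\mathrm{const}\cdot(T-r)^{6-n}L_1^{\,n}$ and requiring the right‑hand side to be a square in $\Q[T]$ forces $(T-r)$ and $L_1$ to be proportional, i.e.\ $r$ is the $I_n^*$‑point, a contradiction — or else forces $\Delta\equiv0$, which is absurd. (Two of these, $\{I_2^*,IV\}$ and $\{I_3^*,III\}$, are killed more cheaply: the root lattices $D_6\oplus A_2$ and $D_7\oplus A_1$ have discriminant $12$ and $8$, which are not squares, so they cannot be finite‑index sublattices of the unimodular lattice $E_8$; only $\{I_4^*,II\}$, with root lattice $D_8$ of square discriminant $4$, survives this test and needs the computation above. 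One could instead quote the known classification of fibre configurations on rational elliptic surfaces.)

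\textbf{Step 3 (solving the two cases).} In the case $\{I_2^*,2\cdot II\}$ the two $II$‑fibres are the zeros of a quadratic $Q$; ``$\mathrm{rad}\mid A'$'' and $\deg A'\le2$ give $A'=aQ$, hence $A=aL_1^2Q$, while $\Delta=\mathrm{const}\cdot L_1^8Q^2$; dividing the identity $\Delta=4A^3\mp27B^2$ by the square $L_1^6Q^2$ shows $B=bL_1^3QM$ for a linear form $M$ and forces the relation $Q=(cL_1^2-27b^2M^2)/4a^3$ (up to the sign convention for $\Delta$), which is exactly $\E_1$. In the case $\{I_1^*,II,III\}$ the $II$‑ and $III$‑fibres have distinct types, hence are each $\Q$‑rational, say $\{L_3=0\}$ and $\{L_2=0\}$; the type conditions give $\mathrm{ord}_{L_2}A=\mathrm{ord}_{L_3}A=1$, so $A'=aL_2L_3$ and $A=aL_1^2L_2L_3$, and matching $\Delta=4A^3-27B^2$ against $\mathrm{const}\cdot L_1^7L_2^3L_3^2$ forces $B=bL_1^3L_2^2L_3$ and then $L_1=4a^3L_3-27b^2L_2$, which is $\E_2$. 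The main obstacle is Step 2, establishing the non‑realizability of the three single‑point configurations — in particular $\{I_4^*,II\}$, which passes the easy lattice‑discriminant test and must be ruled out by the explicit coefficient computation.
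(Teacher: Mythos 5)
Your proof takes a genuinely different route from the paper's. The paper fixes a minimal model $y^2=x^3+Fx+G$, writes $F=P^2F_1$, $G=P^3G_1$ at the $I_m^*$‑place $P$, argues $\deg P=1$ by degree count, passes to $A=\gcd(F_1,G_1)$, and then splits on $\deg A\in\{1,2\}$ and on $\gcd(A,G_2)$; the impossibility of $\deg A=1$ is proved by a somewhat delicate derivative argument. You instead work directly with the Kodaira configuration: Euler‑number accounting ($\sum e_v=12$, $e(I_n^*)=n+6$) gives uniqueness (hence $\Q$‑rationality by Galois‑stability) of the $I_n^*$‑fibre and pins down $n\le4$ via the rank constraint from $E_8$, and the hypothesis ``$M_\E=1$'' plus $\deg A'\le2$ bounds the remaining singular fibres to at most two points of types $II/III/IV$. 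The lattice‑discriminant test kills $\{I_2^*,IV\}$ and $\{I_3^*,III\}$ cheaply, and the two surviving configurations are solved directly to recover $\E_1$ and $\E_2$. This is a cleaner organising principle than the paper's gcd‑based casework, at the cost of importing Shioda–Tate/$E_8$ machinery; both converge on the same two families.

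There is, however, a real gap in your Step 2, precisely at the configuration you flag as ``the main obstacle''. You claim that for the single‑point configurations, ``$\mathrm{rad}\mid A'$ together with the type at $r$'' forces $A'$ to be a constant times a power of $(T-r)$. This is true for $\{I_2^*,IV\}$ (type $IV$ forces $v_r(A')\ge2$, hence $A'=a(T-r)^2$), but for $\{I_4^*,II\}$ — the one case the lattice test does not eliminate — type $II$ only gives $v_r(A')\ge1$. Nothing rules out $A'=a(T-r)L_4$ with $L_4$ a third linear form at which the reduction is good (indeed $v_{L_4}(\Delta)=0$ there), and your square‑in‑$\Q[T]$ argument, which compares $4A'^3$ with $c(T-r)^{6-n}L_1^n$ assuming $A'$ is a pure power, does not address this subcase. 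The conclusion is still correct — expanding $4A'^3+27B'^2=cL_1^4(T-r)^2$ in the monomial basis with $A'=a(T-r)L_4$ leads to an overdetermined, inconsistent system — but that computation has to be carried out (or the known Persson/Oguiso–Shioda classification invoked, as you note in parentheses) before the proof is complete. The same imprecision touches $\{I_3^*,III\}$, where type $III$ also gives only $v_r(A')=1$, but there the discriminant $\det(D_7\oplus A_1)=8$ is not a square, so the lattice argument already disposes of it and your explicit computation is not needed.
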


\begin{rem}
The homogeneous and one-variable versions of the conjectures hold on the surfaces $\E_a$ and $\E_b$. Indeed, Chowla's conjecture is true since $M_\E=1$, and as every irreducible factors of the coefficients are linear, squarefree conjecture also holds. 
\end{rem}

\begin{rem}
In the first case, the places of bad reduction are those associated to $L_1$ (of type $I_2^*$), and those associated to the irreducible factors of $Q$ (of type $II$).

In the second case, we have three rational places of bad reduction: the one associated to $4a^3L_2-27b^2L_3$ has type $I_1^*$, the one associated to $L_2$ has type $III$ and the one associated to $L_3$ has type $II$.
\end{rem}

\begin{proof}
Let $\E$ be the rational elliptic surface associated to $X$, given by the Weierstrass equation:
$$\E: y^2=x^3-27c_4(T)x-54c_6(T),$$
where $c_4(T),c_6(T)\in\Z[T]$ have degree respectively less than or equal to $4$ and $6$. Let $\Delta$ be the discriminant of $\E$. This surface has a place of reduction of type $I_m^*$ because the invariant $j=\frac{c_4^3}{\Delta}$ admits necessarily a pole (at a irreducible polynomial $P\in\Z[T]$ or at $\frac{1}{T}$).

Each fiber at $t=\frac{m}{n}$ is given by the equation :
$$\E_t=\E_{m,n}: y^2=x^3+n^{4-\deg c_4}c_4\left(\frac{m}{n}\right)x+n^{6-\deg c_6}c_6\left(\frac{m}{n}\right).$$

Suppose the $P$ is the polynomial associated to a place of reduction type $I_m^*$, then we write
$F=P^2F_1$ and $G=P^3G_1$ for $F_1,G_1$ polynomials. We have that $P\mid (F_1^3-G_1^2)$.
We need to have $\deg P =1$. Indeed, if $\deg P=2$, then $G_1$ and $F_1$ are constant and thus $\E$ is isotrivial. The case where $\deg P >2$ is not possible because we would have $\deg G > 6$.
Therefore, a non-isotrivial rational elliptic surface with no place of type $I_m$ admits a rational place of reduction $I_m^*$. We have $\deg P=1$, $\deg F_1=2$ and $\deg G_1=3$.

The case where $(F_1,G_1)=1$ is not possible. Indeed, we would have $P^6\mid F_1^3-G_1^2$ and the surface would be isotrivial. Therefore, $F_1$ and $G_1$ have a common factor, that we will denote by $A$. We write $F_1=AF_2$ and $G_1=AG_2$ for convenient polynomial $F_2$ and $G_2$. We have $\Delta=P^6A^2(AF_2^3-G_2^2)$. The reduction at $A$ is thus additive.

Suppose $\deg A=2$. In this case, if $(A,G_2)=1$, we have $$A=\gamma P^2+G_2^2.$$ If $(A,G_2)=A_2$ for a linear polynomial $A_2$, then we have $$P=\frac{A_1-A_2}{\gamma}.$$
Suppose $\deg A=1$. If $A\mid G_2$, we must have 
$$A=\frac{F_2^3-\gamma P^3}{G_3^2}.$$ 
However, there exist no polynomial $A,P,G_2,F_2\in\Z[T]$ with this property. Indeed,
by imposing a linear change $v=P(t)$, and puting $\nu=\frac{u}{v}$, we are lead to solve $$4a^3F_2(\nu)^3+27b^2A(\nu)M(\nu)^2=c.$$
As $F_2\not=P$, $F_2(\nu)$ is non-constant. Let $u_0$ such that $F_2(u_0)=0$. We have 
$$27b^2A(u_0)G_3(u_0)^2=c\not=0.$$
By deriving at $u_0$, we obtain :
$$2A(u_0)G_3(u_0)G_3'(u_0)+A'(u_0)G_3(u_0)^2=0.$$
When we derive another time, we have :
$$2A(u_0)G_3'(u_0)^2+2A(u_0)+4A'(u_0)G_3'(u_0)G_3(u_0)+A''(u_0)G_3(u_0)^2=0.$$
We observe that $G_3$ is linear. We have:
$$2A(u_0)G_3'(u_0)+4A'(u_0)G_3(u_0)=0.$$
Therefore, $A$ is proportionnal to $G_3$. For all $P\in\Z[T]$ linear, the polynomial $P(T)^3-c$ has no double root. Thus, $F_2$ has to be constant. Therefore $G_3,F_2,A$ and $P$ are proportional to each other and the surface $\E$ is isotrivial.

If $A\nmid G_2$, we must have the equality
$$A=\frac{\gamma P^4+G_2^2}{F_2^3}.$$ 
By a similar argument as in the previous case, this is not possible either.
\end{proof}

\subsection{Geometric arguments}\label{dPgeometrique}\label{piste}

In this section we prove unconditionally the density on many more elliptic surfaces, not necessarily isotrivial. Moreover Helfgott's paper does not prove unconditionally the variation of the root number for those surfaces.

\begin{propo}\label{cubique}
Let $\E$ be a elliptic surface given by the equation
\begin{equation}\label{torduelineaire}
\E: y^2=x^3+L^2Qx+L^3C,
\end{equation}
where $L,Q,C\in\mathbb{Z} [u,v]$ have respective degree 1, 2 and 3.
Then the surface is $\Q$-unirational.
In particular, $\E(\Q)$ is Zariski-dense.
\end{propo}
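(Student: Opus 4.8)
The plan is to show that the $\Q$-minimal model of $\E$ is a del Pezzo surface of degree at least $3$ carrying a rational point, and then to invoke the theorem of Segre and Manin (\cite{Manin}) recalled in Section \ref{RES}.

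First one locates a reducible additive fibre. Since $\Delta = 4(L^2Q)^3+27(L^3C)^2 = L^6(4Q^3+27C^2)$, we have $\mathrm{ord}_L(L^2Q)=2$, $\mathrm{ord}_L(L^3C)=3$ and $\mathrm{ord}_L(\Delta)=6$ whenever $L\nmid 4Q^3+27C^2$, so Tate's algorithm gives a fibre of type $I_0^*$ above the zero of $L$; in the remaining degenerate cases $\mathrm{ord}_L(\Delta)$ is larger and the fibre has type $I_m^*$ with $m>0$, or $IV^*$, $III^*$, $II^*$. In every case the Weierstrass model is minimal at $L=0$ and the fibre has at least five components. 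Write the $I_0^*$ fibre as $2\Theta_0+\Theta_1+\Theta_2+\Theta_3+\Theta_4$, with $\Theta_0$ the central component, and let $O$ be the zero section, which meets exactly one simple component, say $\Theta_1$.

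The heart of the argument is a Galois-equivariant contraction. Each of $O$, $\Theta_1$, $\Theta_0$ is stable under $\mathrm{Gal}(\overline{\Q}/\Q)$: the zero section and the unique multiplicity-$2$ component are, and $\Theta_1$ is singled out as the component meeting $O$. One then checks successively that $O$ is a $(-1)$-curve, that after contracting $O$ the curve $\Theta_1$ becomes a $(-1)$-curve, and that after contracting $\Theta_1$ the component $\Theta_0$ becomes a $(-1)$-curve meeting $\Theta_2,\Theta_3,\Theta_4$ at three distinct points. Each contraction is defined over $\Q$ and preserves smoothness, so starting from $K_\E^2=0$ we reach a smooth rational surface $S$ with $K_S^2=3$, together with a birational morphism $\E\to S$ over $\Q$. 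Consequently the $\Q$-minimal model $X$ of $\E$, which by Iskovskikh's Theorem \ref{Isk} is a degree-one conic bundle or a del Pezzo surface, is obtained from $S$ by further blow-downs, whence $K_X^2\geq K_S^2=3$. This excludes the degree-one conic bundle and leaves $X$ a del Pezzo surface of degree $K_X^2\geq 3$; the degenerate fibre types have strictly more components, so the bound holds a fortiori there as well.

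Finally, $\E$ carries rational points — for instance every point of the zero section — and the morphism $\E\to X$ is defined over $\Q$, so $X(\Q)\neq\emptyset$. By the theorem of Segre and Manin a del Pezzo surface of degree $\geq 3$ with a rational point is $\Q$-unirational, and since $\E$ is $\Q$-birational to $X$, the surface $\E$ is $\Q$-unirational; the resulting dominant rational map $\P^2_\Q\dashrightarrow\E$ then forces $\E(\Q)$, which contains the Zariski-dense image of $\P^2(\Q)$, to be Zariski-dense. The delicate point is the third paragraph: one must check carefully that the successive contractions of $O$, $\Theta_1$, $\Theta_0$ really descend to $\Q$ and keep the surface smooth, so that the inequality $K^2\geq 3$ is legitimate; one should also spell out the degenerate cases ($L$ dividing $Q$, $C$ or $4Q^3+27C^2$, or $\E$ isotrivial), in which the same contraction applies to a fibre with even more components.
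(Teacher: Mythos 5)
Your argument follows a genuinely different route from the paper's, and it is sound in outline, but the paragraph drawing consequences from Iskovskikh's theorem has a gap that you flagged only partially. The paper's proof is much more direct: taking $L=v$ (after a linear change of coordinates) and substituting $x'=x/v^2$, $y'=y/v^3$, $t=u/v$, one identifies $\E$ birationally with the cubic surface $y'^2z = x'^3 + Q(t,z)x' + C(t,z)$ in $\P^3$, which visibly carries a smooth rational point coming from the zero section; Kollar's extension of the Segre--Manin theorem (a cubic surface which is not a cone and has a rational point is $\Q$-unirational, allowing isolated singularities) then finishes the proof with no appeal to Tate's algorithm, fibre-component combinatorics, or minimal-model theory. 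Your contraction of $O$, $\Theta_1$, $\Theta_0$ is geometrically the same operation: the paper's cubic is the anticanonical model of your surface $S$.

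The gap: you assert that the $\Q$-minimal model $X$ supplied by Theorem \ref{Isk} is obtained from $S$ by further blow-downs, whence $K_X^2\geq 3$. But $\Q$-minimal models of a rational surface are not unique, so the $X$ of Theorem \ref{Isk} need not be dominated by $S$; and conversely, if you blow $S$ down to some $\Q$-minimal surface, the full Iskovskikh--Manin classification allows a conic bundle of Picard rank $2$ and degree $\geq 3$ in addition to a del Pezzo, a possibility your argument passes over when it ``excludes the degree-one conic bundle.'' Both issues disappear if you finish in the spirit of the paper: observe that $-K_S$ remains nef through the successive blow-downs (it is the push-forward of the nef class $-K_\E$, and $K_S^2=3>0$ makes it big), so $S$ is a weak del Pezzo surface of degree $3$ whose anticanonical model is a cubic surface with at worst isolated singularities and a smooth rational point, and then invoke Kollar's result there directly rather than routing through Theorem \ref{Isk}.
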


\begin{rem} The polynomial $L$ of the surface $\E$ in this proposition is such that $L^6\mid\Delta$. As we chose a minimal Weiestrass model for $\E$, this means that the reduction at $L$ has type $I_0^*$, $II^*$, $III^*$, $IV^*$ or $I_m^*$. Conversly, if we consider a surface with a rational place of one of these types, we can find an equation of the form (\ref{torduelineaire}). We deduce directly the following corollary:
\end{rem}

\begin{coro} 
If a rational elliptic surface $\E$ has a rational place of type $I_0^*$, $II^*$, $III^*$, $IV^*$ or $I_m^*$, then the rational points of $X$ are Zariski-dense.

In particular, if $\E$ is a non-isotrivial elliptic surface with no place of multiplicative reduction, then its rational points are dense.
\end{coro}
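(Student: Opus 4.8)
The corollary repackages Proposition~\ref{cubique}: the plan is to put $\E$ in the shape~(\ref{torduelineaire}) and then quote that proposition. Start from a minimal Weierstrass model $y^2=x^3+A(T)x+B(T)$ of $\E$ with $\deg A\le 4$ and $\deg B\le 6$, and suppose it has a fibre of type $I_0^*$, $II^*$, $III^*$, $IV^*$ or $I_m^*$ over a $\Q$-rational point $\mathfrak p$ of the base. Applying an automorphism of $\P^1_\Q$ — and, if $\mathfrak p$ is the point at infinity, the further substitution $T\mapsto 1/T$, which preserves the shape of the model — one may assume $\mathfrak p$ is the zero of a linear form $L\in\Z[T]$. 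A glance at Tate's tables shows that each of these five Kodaira types satisfies $v_{\mathfrak p}(c_4)\ge 2$ and $v_{\mathfrak p}(c_6)\ge 3$; since $c_4=-48A$, $c_6=-864B$ and $-48,-864$ are units at $\mathfrak p$, this forces $L^2\mid A$ and $L^3\mid B$. Writing $A=L^2Q$ and $B=L^3C$ with $Q,C\in\Z[T]$ of degrees $\le 2$ and $\le 3$, the surface $\E$ takes the form~(\ref{torduelineaire}); if some of these degrees is smaller than its generic value the proof of Proposition~\ref{cubique} still goes through, as it uses only that $\Delta_\E=L^6(4Q^3+27C^2)$ and hence that the fibre over $L=0$ is reducible. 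Proposition~\ref{cubique} now gives that $\E$ is $\Q$-unirational, so $\E(\Q)$ is Zariski-dense. (The mechanism inside Proposition~\ref{cubique} is the one already used for the density theorem of Section~\ref{IRES1728}: by Theorem~\ref{Isk} a minimal model of $\E$ is a conic bundle of degree $1$ or a del Pezzo surface; the reducible fibre over $L=0$ excludes the del Pezzo surface of degree $1$ via Lemma~\ref{thmDP1}; and the surviving cases are $\Q$-unirational by Koll\'ar--Mella~\cite{KollMell}, by Segre--Manin~\cite{Manin}, or by Salgado--Testa--V\'arilly-Alvarado~\cite{STVA}.)

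For the second assertion, let $\E$ be non-isotrivial with no place of multiplicative reduction. By the Proposition of Section~\ref{sectionsansIm}, a minimal model of $\E$ is $\Q$-isomorphic to one of the two surfaces $\E_1$, $\E_2$ exhibited there, and in either case — as noted in the remarks following that proposition — $\E$ has a rational place of type $I_2^*$ (at the zero of $L_1$) or of type $I_1^*$. Thus $\E$ has a rational place of type $I_m^*$, and the first part yields that $\E(\Q)$ is Zariski-dense. One could also argue directly: with no multiplicative reduction, non-isotriviality forces some place of potentially multiplicative — and therefore additive, hence $I_m^*$ for some $m\ge 1$ — reduction, and the bound $\deg\Delta_\E\le 12$ shows this place is $\Q$-rational.

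Within the corollary proper the only work is this bookkeeping — checking $v_{\mathfrak p}(c_4)\ge 2$ and $v_{\mathfrak p}(c_6)\ge 3$ uniformly over the five fibre types, and confirming that the potentially multiplicative place of the last clause lies over $\Q$. The genuinely delicate step is internal to Proposition~\ref{cubique}, namely the del Pezzo surface of degree $2$ case: one must exhibit a rational point of the minimal model avoiding the finitely many exceptional curves and the distinguished quartic. When the Mordell--Weil rank of $\E$ is positive this is automatic, but the rank can be zero — precisely when the fibre over $L=0$ has type $II^*$ — and one must then use the extra rational curves furnished by the non-identity components of that reducible fibre to place a good point before applying the Salgado--Testa--V\'arilly-Alvarado criterion.
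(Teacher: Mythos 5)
Your reduction of the corollary to Proposition~\ref{cubique} is correct and follows the paper's approach. Reading off from Tate's algorithm that each of the five Kodaira types on a minimal model forces $v_{\mathfrak p}(c_4)\geq 2$ and $v_{\mathfrak p}(c_6)\geq 3$, hence $L^2\mid A$ and $L^3\mid B$, is exactly the content of the remark preceding the corollary, and your treatment of the degenerate-degree case is a sensible precaution. The second assertion also matches: the classification of Section~\ref{sectionsansIm} shows both $\E_1$ and $\E_2$ carry a rational $I^*_m$ place ($I^*_2$, resp.\ $I^*_1$), and your direct valuation count (potentially multiplicative plus additive gives $I^*_m$ with $m\geq 1$, so $v_{\mathfrak p}(\Delta)\geq 7$, so $\deg\mathfrak p=1$ since $\deg\Delta\leq 12$) is an acceptable alternative.

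However, your parenthetical description of what drives Proposition~\ref{cubique}, and the closing worry about a ``delicate del Pezzo degree $2$ case,'' misdescribe the paper's argument. The paper's proof of that proposition does \emph{not} route through Iskovskikh's trichotomy (Theorem~\ref{Isk}), Lemma~\ref{thmDP1}, or the Salgado--Testa--V\'arilly-Alvarado degree-$2$ criterion. After normalising $L=v$, the substitution $(x,y)\mapsto(x/L^2,y/L^3)$ turns $\E$ directly into a cubic surface $y'^2=x'^3+q(t)x'+c(t)\subset\P^3$ with at most finitely many singularities; the paper then invokes the Segre--Manin--Koll\'ar theorem that a cubic surface which is not a cone and carries a $\Q$-rational nonsingular point is $\Q$-unirational, the required point being furnished by the blow-down. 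In particular nothing special happens when the fibre over $L=0$ is of type $II^*$ or when the generic Mordell--Weil rank is zero: the cubic always has a rational point, independently of the Kodaira type. Your proof of the corollary is sound as a reduction to Proposition~\ref{cubique}, but the commentary on the internal mechanism of that proposition should be corrected.
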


\begin{proof}
Let $S$ be an elliptic surface given by the equation
\begin{displaymath}
S: y^2=x^3+L(t,1)^2Q(t,1)x+L(t,1)^3C(t,1),
\end{displaymath}
where $L,Q,C\in\mathbb{Z}[u,v]$ have respective degree 1, 2 and 3. Note that this surface is rational.
We study the surface which is birational
\begin{equation}\label{etoile}
\Big(\frac{y}{L^3}\Big)^2=\Big(\frac{x}{L^2}\Big)^3+\frac{Q}{L^2}\Big(\frac{x}{L^2}\Big)+\Big(\frac{C}{L^3}\Big).
\end{equation}

We can suppose that $L(u,v)=v$ (otherwise, we do a linear change on $u,v$). Put $t=\frac{u}{v}$, $x'=\frac{x}{v^2}$ and $y'=\frac{y}{v^3}$, whose inverse transformation is $x=x'v^2$, $y=y'v^3$, $u=tv$. By this change of variables, (\ref{etoile}) becomes
\begin{displaymath}
S': y'^2=x'^3+q(t)x'+c(t)\subset\mathbb{P}^3,
\end{displaymath}
with $Q(t,1)=q(t)$ and $C(t,1)=c(t)$, which is a cubic surface with a finite number of singular points.

Note that on a cubic surface which is not a cone on a cubic curve, the existence of a rational point is equivalent to the density of the rational points. This is shown by Kollar \cite{Kollarcubique}, generalizing the work of Segre and Manin \cite{Manin}.

From a geometric point of view, this surface is obtained by the contraction of two exceptional curves. For a surface obtained by the successive blow-down of two disjoint exceptional curves (which is the case of $S'$), we are guaranteed to have a rational point: the one associated to the point $[0,0,1,1]$ (which is not singular). 
\end{proof}

In the previous section, we show that a rational elliptic surface with no place of multiplicative reduction has one of the two following forms:

\begin{equation}\label{surface1}
\E_1:y^2=x^3+aL_1^2Qx+bL_1^3QM
\end{equation}
and
\begin{equation}\label{surface2}
\E_2:y^2=x^3+a(4a^3L_3-27b^2L_2)^2L_2L_3x+b(4a^3L_3-27b^2L_2)^3L_2^2L_3,
\end{equation}
where $a,b\in\Z$, $L_1$, $L_2$, $L_3$ and $M$ linear polynomials and $Q$ a quadratic polynomial. In the first case, we impose moreover that $M$ is such that $M^2=\left(\frac{L_1^2-4a^3Q}{27b^2}\right)$.

On surface $\E_1$, the places of bad reduction are those associated to $L_1$, of type $I_2^*$, and those associated to the irreducible factors of $Q$, of type $II$.

On surface $\E_2$, we have three rational places of bad reduction - the one associated to $4a^3L_2+27b^2L_3$ has type $I_1^*$, the one associated to $L_2$ has type $III$ and the one associated to $L_3$ has $II$.

Therefore, the results previously presented prove the density of rational points on these surfaces. 
The work of Helfgott proves in those cases the density of rational points although under the parity conjecture which we are not using here.

There is a fourth method to show the density, at least for surface $\E_2$.
Let $\E$ be an elliptic surface and $E$ its generic fiber (that is to say $\E$ seen as an elliptic curve over $\Q[T]$). By the Shioda-Tate formula, we have $$\mathrm{rg}NS(\E_{\overline{\Q}})=2+\mathrm{rg}E(\overline{\Q}(T))+\sum_{v}(m_v-1).$$
The surfaces that we consider are obtained by blowing-up $\P^2$ at 9 points in general position, the N\'eron-Severi rank is $\mathrm{rg}NS(\E)=10$.

In the first case, Shioda-Tate formula says that $\mathrm{rg} E(\overline{\Q}(T))=4$. Unfortunately, although it gives an interesting majoration: $\mathrm{rg}(E(\Q(T)))\leq4$, this is not precise enough to conclude on the density.
There is indeed an uncertainty, except in the case where we can bound it this way : $\mathrm{rg}(E(\Q(T)))\geq1$.
It is just what happen in the second case. Indeed the Shioda-Tate formula gives $\mathrm{rg} E(\overline{\Q}(T))=1$.

We have $\E(\overline{\Q}(T))=\Z\cdot P_o$, for a certain point $P_o$. Therefore there exists $K$ a quadratic extension of $\Q$ such that $P_o\in\E(K(T))$. Indeed, if for every $\sigma\in\mathrm{Gal}(\overline{\Q}/\Q)=G_{\Q}$ we put $\sigma P_o:=\varepsilon(\sigma)\cdot P_o$ where $\varepsilon : G_\Q\rightarrow \pm1$, then 
\begin{enumerate}
\item either $\varepsilon$ is trivial and $P_o\in E(\Q(T))$,
\item or $\varepsilon$ is non-trivial and in this case, $\overline{\Q}^{\mathrm{Ker}\varepsilon}=K$, the subfield of $\overline{\Q}$ stabilised by $\varepsilon$, is a quadratic field such that $P_o\in E(K(T))$.
\end{enumerate}
One can remark, similarly as in the proof of Proposition \ref{cubique}, that $\E_2$ is birational to a cubic surface.
We use the following proposition to end the argument :

\begin{propo}

Let $S$ be a non-singular cubic surface on a number field $k$. Suppose $S$ is not a cone on a cubic curve.

\begin{enumerate}
\item If $S(k)\not=\varnothing$, then $S(k)$ is Zariski-dense.
\item Let $k_1$ be a quadratic extension of $k$. If $S(k_1)\not=\varnothing$ is Zariski-dense, then  $S(k)$ is Zariski-dense.
\end{enumerate}

\end{propo}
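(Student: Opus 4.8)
The plan is to reduce both assertions to the theorem of Segre and Manin \cite{Manin}, in the strong form due to Koll\'ar \cite{Kollarcubique}: a smooth cubic surface $S$ over an infinite field $k$ which is not a cone over a plane cubic and satisfies $S(k)\neq\varnothing$ is unirational over $k$, i.e.\ admits a dominant rational map $\P^2_k\dashrightarrow S$ defined over $k$. (For a smooth $S$ the hypothesis of not being a cone is automatic, since a cone over a curve is singular at its vertex, but it costs nothing to keep it.) Granting this, part (1) is immediate: a number field $k$ is infinite, so $\P^2(k)$ is Zariski-dense in $\P^2_k$; the domain of definition $U$ of the unirational parametrisation is open and dense, so $U(k)$ is still Zariski-dense, and its image is contained in $S(k)$ and dense in $S$, whence $S(k)$ is Zariski-dense. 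If one prefers to avoid quoting Koll\'ar and use only Segre's original construction --- which requires a $k$-point lying on none of the $27$ lines --- one first upgrades the given point $p\in S(k)$: a general line (or conic) through $p$ defined over $k$ meets $S$ in further $k$-points, and for a suitable choice none of them lies on the finitely many lines; here again one uses that $k$ is infinite.

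For part (2), write $\sigma$ for the nontrivial element of $\mathrm{Gal}(k_1/k)$; I claim it suffices to know that $S(k_1)\neq\varnothing$. Pick $P\in S(k_1)$. If $P\in S(k)$ we conclude by part (1), so assume $P\neq Q:=P^\sigma$, and let $\ell\subset\P^3$ be the line through $P$ and $Q$. Since $\sigma$ interchanges $P$ and $Q$, the line $\ell$ is $\sigma$-stable, hence defined over $k$. If $\ell\subset S$, then $\ell$ is a line on $S$ defined over $k$, so $\ell\cong\P^1_k$ and $S(k)\supseteq\ell(k)\neq\varnothing$; apply part (1). Otherwise, by B\'ezout, $\ell\cap S$ is an effective divisor of degree $3$ on $\ell\cong\P^1_k$ which is defined over $k$ and whose support contains the distinct points $P$ and $Q$; write it $P+Q+R$. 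It cannot equal $2P+Q$, since its image under $\sigma$ would then be $2Q+P\neq 2P+Q$, contradicting $\sigma$-invariance; likewise it is not $2Q+P$, $3P$ or $3Q$. Hence $R\notin\{P,Q\}$ is a single reduced point, and, $\sigma$ fixing $P+Q+R$ while swapping $P$ with $Q$, it must fix $R$; therefore $R\in S(k)$. As $S$ is smooth, $R$ is a smooth $k$-point, and part (1) again yields that $S(k)$ is Zariski-dense.

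The only substantial input is the unirationality theorem invoked in part (1); I expect the rest to present no real obstacle. The points needing a little care are the exclusion of the case $\ell\subset S$ (settled by the remark that a $\sigma$-stable line is already defined over $k$ and hence has a $k$-point) and the exclusion of the degenerate intersection divisors on $\ell$ (settled by $\sigma$-invariance of $\ell\cap S$ together with $P\neq Q$). If a fully self-contained treatment is wanted, the remaining labour is the standard reduction in part (1) that moves a given $k$-point into general position with respect to the $27$ lines.
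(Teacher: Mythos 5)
Your proof is correct and follows essentially the same strategy as the paper: part (1) is reduced to the Segre--Manin unirationality theorem, and part (2) considers the line through $P$ and its conjugate $P^\sigma$, distinguishing whether or not it lies on $S$. The one genuine improvement is your careful exclusion of the degenerate intersection divisors $2P+Q$, $P+2Q$, $3P$, $3Q$ via $\sigma$-invariance, a point the paper's proof silently skips when it asserts the third intersection point is a $k$-point; your remark that Zariski-density of $S(k_1)$ is superfluous (only $S(k_1)\neq\varnothing$ is used) is likewise implicit in the paper's argument but worth making explicit.
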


\begin{proof}
The first statement of the proposition is shown by Segre and Manin \cite{Manin} and by Koll\'ar \cite{Kollarcubique}. They actually prove a stronger result : if $k$ is an arbitrary field and that $S(k)\not=\varnothing$, then $S$ est $k$-unirational. When $k$ is infinite, this implies the Zariski-density of rational points.

We now show the second point of the proposition.
Let $P\in S(k_1)$. 
If $P\in S(k)$, then the rational points are dense.
Suppose the that $P\not\in S(k_1)$. Consider $D$ the line passing through $P$ and $P^\sigma$ where $\sigma$ is the automorphism of $k_1$ fixing $k$. 
If $D\subset S$, then $D(k)\subset S(k)$ and thus the set of rational points of $S$ is not empty.
Otherwise, the intersection $D\cap S$ contains three points: $P$, $P^\sigma$, and a third point which is necessarily in $S(k)$.
\end{proof}

We end the section with a result concerning smooth rational elliptic surfaces, associated to a del Pezzo surface of degree 1.
Let $X$ be a del Pezzo surface of degree 1. In general, if there exists $C_1$ and $C_2$ a pair of exceptional curves defined over $\Q$ on $X$ such that their intersection is empty, one can contract those curves to obtain $X_3$ a del Pezzo surface of degree 3. On $X_3$, the existence of a rational point garantees the Zariski-density of $X(k)$.
In what follows, we use this idea to prove the density one some other surfaces on which we find two exceptional curves with non empty intersection.

\begin{propo}
Let $X$ a del Pezzo surface of degree 1 on which lie $\mathscr{C}_1$ and $\mathscr{C}_2$ two distinct exceptional curves defined over $k$ with possibly points in common.
Then $X(k)$ is Zariski-dense.
\end{propo}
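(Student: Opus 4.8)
The plan is to use the elliptic fibration carried by $X$ rather than the contraction technique of the preceding arguments. Since $X$ is a del Pezzo surface of degree $1$, the anticanonical pencil $|-K_X|$ has a single reduced base point $O$, which is Galois-stable and hence lies in $X(k)$. Blowing up $O$ produces a rational elliptic surface $\pi\colon\E\to\P^1_k$ with zero section $\sigma_0$ (the exceptional divisor of the blow-up), and, as recalled in Section~\ref{DP1}, $X(k)$ is Zariski-dense if and only if $\E(k)$ is. By Lemma~\ref{thmDP1}, all singular fibres of $\pi$ have type $II$ or $I_1$; in particular every fibre is irreducible and reduced.

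The first key point is that no exceptional curve of $X$ passes through $O$. Indeed, if $\mathscr{C}$ were an exceptional curve with $O\in\mathscr{C}$, then from $\mathscr{C}\cdot(-K_X)=1$ its strict transform $\widetilde{\mathscr{C}}$ on $\E$ would be vertical with $\widetilde{\mathscr{C}}^{\,2}=-2$, i.e. a proper component of a fibre of $\pi$ — impossible since the fibres are irreducible and reduced. Hence $\mathscr{C}_1$ and $\mathscr{C}_2$ each lift isomorphically to a curve on $\E$ meeting the generic fibre once, that is, to a section of $\pi$ defined over $k$ and distinct from $\sigma_0$. (Incidentally this forces any exceptional curve of $X$ defined over $k$ to be isomorphic to $\P^1_k$.)

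To conclude I would pass to the generic fibre $\E_T$, an elliptic curve over $k(T)$. Since only fibres of type $II$ and $I_1$ occur, all local component groups are trivial, so the Mordell--Weil group $\E_T(\overline{k}(T))$ is torsion-free (it is the $E_8$ lattice). Therefore the $k(T)$-point attached to the section coming from $\mathscr{C}_1$ — or, using both hypotheses, the difference of the points attached to the sections of $\mathscr{C}_1$ and $\mathscr{C}_2$, which is nonzero because $\mathscr{C}_1\neq\mathscr{C}_2$ — is a nonzero, hence non-torsion, element of $\E_T(k(T))$. By Silverman's specialization theorem \cite{Silvermanspec} its specialization is non-torsion in $\E_t(k)$ for all but finitely many $t\in\P^1(k)$, so infinitely many fibres of $\E$ are elliptic curves of positive rank over $k$; being Zariski-dense in themselves, they force $\E(k)$, and thus $X(k)$, to be Zariski-dense.

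I expect the main obstacle to be the second step: verifying carefully that the strict transform of an exceptional curve really is a genuine section over $k$ (a degree-one multisection of an elliptic fibration over a field is a section) and that no exceptional curve meets the anticanonical base point, for which the irreducibility of all fibres provided by Lemma~\ref{thmDP1} is exactly what is used. If one instead prefers to stay within the contraction framework of this section, one would split into cases according to $\mathscr{C}_1\cdot\mathscr{C}_2\in\{1,2,3\}$: for the value $2$, replace $\mathscr{C}_1$ by the conjugate exceptional curve $-2K_X-\mathscr{C}_1$, which is defined over $k$ and disjoint from $\mathscr{C}_2$, and contract both to a cubic surface bearing a $k$-point; for the value $1$, contract $\mathscr{C}_1$ to a del Pezzo surface of degree $2$, on which the image of $\mathscr{C}_2$ is a conic isomorphic to $\P^1_k$ through the $k$-rational intersection point, giving infinitely many $k$-points off the exceptional curves and the distinguished quartic and hence density by Salgado, Testa and V\'arilly-Alvarado \cite{STVA}; the value $3$ forces $\mathscr{C}_2=-2K_X-\mathscr{C}_1$ and is precisely where the contraction trick is most delicate, so the elliptic-fibration argument above is the one I would write out in full.
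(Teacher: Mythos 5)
Your proof is correct, and it takes a genuinely different route from the paper's. The paper contracts $\mathscr{C}_1$ to a del Pezzo surface $X'$ of degree~2, notes that the image of $\mathscr{C}_2$ is a rational curve on $X'$ which is \emph{not} exceptional precisely because $\mathscr{C}_2$ met $\mathscr{C}_1$, and then applies Salgado--Testa--V\'arilly-Alvarado \cite{STVA} once a $k$-point of that rational curve is found away from the exceptional locus and the distinguished quartic. You instead exploit the elliptic fibration $\E\to\P^1_k$ obtained by blowing up the anticanonical base point: the correct observation that no $(-1)$-curve of a degree-1 del Pezzo passes through that base point (via the fibre-irreducibility of Lemma~\ref{thmDP1}, giving $\widetilde{\mathscr{C}}\cdot(-K_\E)=0$ and $\widetilde{\mathscr{C}}^2=-2$, impossible on a fibre that is irreducible and reduced) shows that each $\mathscr{C}_i$ lifts to a $k$-section, necessarily $\cong\P^1_k$; the Oguiso--Shioda fact that the geometric Mordell--Weil group of a rational elliptic surface with only irreducible fibres is the torsion-free lattice $E_8$ makes any non-identity section non-torsion; and Silverman's specialization theorem \cite{Silvermanspec} then produces infinitely many fibres of positive rank. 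Two remarks are worth making. First, your argument in fact proves a \emph{stronger} statement: a single exceptional curve defined over $k$ already forces density, so the hypothesis of two distinct curves is not needed for this route. Second, the paper's proof is slightly weaker as written because it proceeds ``in the case where $\mathscr{C}\cap\mathscr{E}$ is finite''; your fibration argument avoids that conditional altogether, which is one advantage of this approach. Your sketched alternative via case analysis on $\mathscr{C}_1\cdot\mathscr{C}_2\in\{1,2,3\}$ (the case $0$ being the disjoint case treated before the proposition) is also sound and closer in spirit to the paper, with the case $\mathscr{C}_1\cdot\mathscr{C}_2=3$, i.e.\ $\mathscr{C}_2=-2K_X-\mathscr{C}_1$, being exactly where the contraction method is most delicate and where the fibration argument is cleanest.
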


\begin{proof}
 The contraction of $\mathscr{C}$ gives $X'$ a del Pezzo surface of degree 2. We know that on these surfaces, the rational points of $X'$ are dense if $X'(k)$ contains a point which is neither on an exceptional curve nor on a distinguished quartic. Put $\mathscr{E}$ the union of the points of this quartic and of the exceptional curves. The contraction sends $\mathscr{C}_2$ on a rational curve of $X'$ that we will denote by $\mathscr{C}$. 
Note that $\mathscr{C}$ is not an exceptional curve on $X'$ because it is the blow-down of a curve which has a point in common with $\mathscr{C}_1$.
In the case where $\mathscr{C}\cap\mathscr{E}$ is finite, one can find a rational point outside of $\mathscr{E}$, and this proves the density of the rational points. 
\end{proof}

\appendix

\section{Local root number}

Let $E_\alpha$ be the (sextic or quartic) twist by a non-zero $\alpha\in\Q$ of an elliptic curve with $j=0$ or $j=1728$.
In this appendix we study in more details the two functions $\omega_2$ and $\omega_3$ defined in Section \ref{rootnumberIRES} appearing in the decomposition of the root number (of Equation \ref{formulern}). The values of those functions depend only on  $\alpha_2$ and $v_2(\alpha)$ or respectively on $\alpha_2$ and $v_2(\alpha)$.
Remember our notation: for any prime number $p$, $\alpha_p$ is the integer such that $t=p^{v_p(\alpha)}\alpha_p$.

As in section \ref{rootnumberIRES}, we restrict our attention to: $$\alpha=C(3A^2m^6+B^2n^6)\text{ (in case $j=0$)}\quad\text{ and }\quad\alpha=C(A^2m^4+B^2n^4)\text{ (in case $j=1728$)}$$ and thus study the surfaces $\E_{A,B,C}$ or $\mathscr{F}_{A,B,C}$ given by the equations: \begin{equation}\label{formespreferees}\E_{A,B,C}:y^2=x^3+C(3A^2T^6+B^2)\quad\text{ and }\quad \mathscr{F}_{A,B,C}: y^2=x^3+C(A^2T^4+B^2)x\end{equation} because those are the natural cases where the root number is likely to be constant according to \cite{VA}. In those equation $A,B,C\in\Z$ are such that $\gcd(A,B)=1$.

Let us briefly recapitulate what was done in Section 6 before we state the result. We use a formula of Varilly-Alvarado splitting the root number into three functions, $\omega_2(t)$, $\omega_3(t)$, $\mathscr{P}(t)$ corresponding to the contribution of respectively the prime numbers $p=2$, $p=3$ and $p\geq5$.

While $\mathscr{P}(t)$ is constant for a given surface of one of the forms of (\ref{formespreferees}), the functions $\omega_2$ and $\omega_3$ varies independently from each other, and hence each of them must be constant for the global root number to take always the same value over the fibers of the surface.

\subsection{The elliptic surface $\E_{A,B,C}:y^2=x^3+C(3A^2T^6+B^2)$}
Let $\E$ be an elliptic surface given by the Weierstrass equation \begin{equation}\label{surfaceeqj0}\E: y^2=x^3+C(3A^2T^6+B^2),\end{equation}
where $A,B,C\in\Z$ and $\pgcd(A,B)=1$.

Put $P(m,n)=C(3A^2m^6+B^2n^6)$ and
define the two functions, for $p=2$ or $3$, $$\omega_p(t):t=\frac{m}{n}\in\Q_p\rightarrow\{-1,+1\}$$ as the following $$\omega_3(t)=W_3(E_t)(-1)^{v_3(P(m,n))}.$$
Define the function $$\omega_2(t):=W_2(\E_t)\left(\frac{-1}{P_2(m,n)}\right).$$
\begin{lm}\label{lemmej0w3}
The function $\omega_3$ is constant if and only if one option is satisfied
\begin{enumerate}
\item $A,B,C$ are in Table \ref{constantplus3} (in which case $\omega_3(t)=+1$)
\item $A,B,C$ are in Table \ref{constantmoins3} (in which case $\omega_3(t)=-1$)
\end{enumerate}
\end{lm}

%We note $P_2(m,n)$ (respectively $m_2$) the integer such that $P(m,n)=2^{v_2(P(m,n))}P_2(m,n)$ (resp. $m=2^{v_2(m)}m_2$).

\begin{lm}\label{lemmej0w2}

The function $\omega_2$ is constant if and only if one option is satisfied:
\begin{enumerate}
\item $A,B,C$ are in Table \ref{constantplus2} (in which case $\omega_2(t)=+1$)
\item $A,B,C$ are in Table \ref{constantmoins2} (in which case $\omega_2(t)=-1$)
\end{enumerate}
\end{lm}

\begin{center}
\begin{table}[!htbp]
\begin{center}
\begin{tabular}{| c | c | c | c |}
\hline
$v_3(A)[3]$&$v_3(B)[3]$&$v_3(C)[6]$ & Additional conditions  
\\

\hline
0&0&0 & $C_3B_3^2\equiv1\mod9$\\ 
&&2&$C_3A_3^2\equiv 2\mod9$\\
&&5&$C_3A_3^2\equiv2,8\mod9$\\ \hline

1&0&0&$C_3A_3^2\equiv2,4\mod9$, $C_3B_3^2\equiv1,2,4,8\mod9$\\ 
&&1,4&$C_3\equiv1\mod 3$\\
&&3&$C_3A_3^2\equiv1,2,4,8\mod9$, $C_3B_3^2\equiv2,4\mod9$\\ 
&&2,5&$C_3\equiv2\mod3$\\ \hline
2&0&0&$C_3B_3^2\equiv2,8\mod9$\\
&&1&$C_3A_3^2\equiv1\mod9$\\
&&3&$C_3B_3^2\equiv2\mod9$\\
%&&4&$C_3A_3^2\equiv5,7\mod9$\\ 
\hline

0&1&1&$C_3B_3^2\equiv2\mod9$\\
&&4&$C_3B_3^2\equiv2,8\mod9$\\
&&5&$C_3A_3^2\equiv1\mod9$\\ \hline

0&2&0\text{, }3& $C_3\equiv1\mod3$\\
&&1,4&$C_3\equiv2\mod3$\\
&&2&$C_3A_3^3\equiv2,4\mod9$\\
%&&4&$C_3B_3^3\equiv5\mod9$\\ 
&&5&$C_3B_3^2\equiv2,4\mod9$\\ \hline
\end{tabular}
\end{center}
\caption{\label{constantplus3} Cases where $\omega_3(t)=+1$  for every fiber $\E_t$ of the surface \ref{surfaceeqj0}. 
}
\end{table}
\end{center}

\begin{center}
\begin{table}[!htb]
\begin{center}
\begin{tabular}{| c | c | c | c |}
\hline
$v_3(A)[3]$&$v_3(B)[3]$&$v_3(C)[6]$ & Additional conditions  
\\

\hline
0&0&2 & $C_3A_3^2\equiv1,7\mod9$\\ 
&&3&$C_3B_3^2\equiv 8\mod9$\\
&&5&$C_3A_3^2\equiv7\mod9$\\ \hline

1&0&0&$C_3A_3^2\equiv1,5,7,8\mod9$, $C_3B_3^2\equiv5,7\mod9$\\ 
&&1,4&$C_3\equiv2\mod3$\\
&&2,5&$C_3\equiv1\mod3$\\
&&3&$C_3A_3^2\equiv2,4\mod9$, $C_3B_3^2\equiv1,5,7,8\mod9$\\ \hline
2&0&0&$C_3B_3^2\equiv7\mod9$\\
%&&1&$C_3A_3^2\equiv1,4\mod9$\\
%&&2,5&$C_3\equiv1\mod3$\\ 
&&3&$C_3B_3^2\equiv1,7\mod9$\\
&&4&$C_3A_3^2\equiv8\mod9$\\ \hline

0&1&1&$C_3B_3^2\equiv1,7\mod9$\\ 
&&2&$C_3A_3^2\equiv8\mod9$\\
&&4&$C_3B_3^2\equiv7\mod9$\\ \hline

0&2&0\text{, }3& $C_3\equiv2\mod3$\\
&&1, 4&$C_3\equiv1\mod3$\\
&&2&$C_3B_3^2\equiv5,7\mod9$\\
%&&4&$C_3B_3^2\equiv1,4\mod9$\\ 
&&5&$C_3A_3^2\equiv5,7\mod9$\\ \hline
\end{tabular}
\end{center}
\caption{\label{constantmoins3} Cases where $\omega_3(t)=-1$  for every fiber $\E_t$ of the surface \ref{surfaceeqj0}. 
}
\end{table}
\end{center}

\begin{center}
\begin{table}[htb]
\begin{center}
\begin{tabular}{| c | c | c | c |}
\hline
$v_2(A)\mod3$&$v_2(B)\mod3$&$v_2(C)\mod6$ & Additional conditions  
\\

\hline
0&0&1,3,5&\\
%&&2&$C_2\equiv 3\mod4$\\ 
\hline
1&0&1,3,5&\\ 
&&2&$C_2\equiv3\mod4$\\ 
&&4&$C_2\equiv1\mod4$\\
\hline
2&0&0&$C_2\equiv3\mod4$\\
&&1,3,5&\\
&&4&$C_2\equiv1\mod4$\\ \hline
0&1&1,3,5&\\
&&2&$C_2\equiv1\mod4$\\
&&4&$C_2\equiv3\mod4$\\ \hline
0&2&0&$C_2\equiv1\mod4$\\
&&1,3,5&\\
&&4&$C_2\equiv3\mod4$\\
%1&&2,5&$C_3\equiv1\mod3$\\ 
%&&3&$C_3B_3^2\equiv5,8\mod9$\\
%&&4&$C_3A_3^2\equiv2\mod9$\\ \hline
\hline
\end{tabular}
\end{center}
\caption{\label{constantplus2} Cases where $\omega_2(t)=+1$  for every fiber $\E_t$ of the surface \ref{surfaceeqj0}. 
}
\end{table}
\end{center}

\begin{center}
\begin{table}[htb]
\begin{center}
\begin{tabular}{| c | c | c | c |}
\hline
$v_2(A)\mod3$&$v_2(B)\mod3$&$v_2(C)\mod6$ & Additional conditions  
\\

\hline
%0&0&0,2&$C_3\equiv 3\mod4$\\
1&0&2&$C_2\equiv3\mod4$\\ \hline
2&0&0\text{, }2& $C_2\equiv1\mod4$\\ \hline
0&1&0\text{, }4&$C_2\equiv1\mod4$\\
%0&2&4&$C_2\equiv1\mod4$\\
\hline
\end{tabular}
\end{center}
\caption{\label{constantmoins2} Cases where $\omega_2(t)=-1$  for every fiber $\E_t$ of the surface \ref{surfaceeqj0}. 
}
\end{table}
\end{center}

\begin{proof}of Lemma \ref{lemmej0w3}.
Let $A$,$B$,$C$ be integers such that $\gcd(A,B)=1$. To ease the notation, let us simply write $\E=\E_{A,B,C}$.
For each fiber $\E_t$, we study instead the curve $\E_{m,n}:y^2=x^3+C(A^2m^6+B^2n^6)$ which is $\Q$-isomorphic. 
Let $P(m,n)=C(3A^2m^6+B^2n^6)$. For every $(m,n)\in\Z^2$ one has $$P(m,n)=3^{v_3(C)}C_3\left(3^{2v_3(A)+6v_3(m)+1}A_3^2m_3^6+3^{2v_3(B)+6v_3(n)}B_3^2n_3^6\right),.$$ 

Remark that according to the 3-valuations of $A$ and $B$ different situations occur. We will treat in details the case where $v_3(A)=0$ and $v_3(B)=0$. In that case, $v_3(P(m,n))$ is at least \begin{equation}v_3(P(m,n))=v_3(C)+\min \big(2v_3(A)+6v_3(m)+1, 2v_3(B)+6v_3(n)\big).\end{equation}

We make the distinction between three properties for the coprime integers $m,n$:
\begin{enumerate}[(a)]
\item if $2v_3(A)+6v_3(m)=2v_3(B)+6v_3(n)$,%$3\nmid m, n$, 
then $v_3(m,n)=a$ and $P(m,n)_3\equiv C_3(3A_3^2+B_3^2)\mod 9$,
\item if $2v_3(A)+6v_3(m)<2v_3(B)+6v_3(n)$ %$3\mid m$ and $3\nmid n$,
 then $v_3(m,n)=a$ and $P(m,n)_3\equiv C_3B_3^2\mod 9$
\item if $2v_3(A)+6v_3(m)>2v_3(B)+6v_3(n)$,%$3\nmid m$ and $3\mid n$, 
then $v_3(m,n)=a+1$ and $P(m,n)_3\equiv C_3A_3^2$
\end{enumerate}

%and thus we have $$v_3(P(m,n))=v_3(C)+\min \big(2v_3(A)+6v_3(m)+1, 2v_3(B)+6v_3(n)\big).$$
%Note that it is not possible to have $2v_3(A)+6v_3(m)+1=2v_3(B)+6v_3(n)$.

\noindent

In those subcases, we obtain a different formula for the function $\omega_3$, as follows.

(a) Suppose that $2v_3(A)+6v_3(m)=2v_3(B)+6v_3(n)$. One has $$v_3(P(m,n))=2v_3(B)+6v_3(n)\equiv 2v_3(B)\mod6\text{ and }P(m,n)_3\equiv C_3(3A_3^2+B_3^2)\mod 9.$$
%\equiv \begin{cases}C_3(3A_3^2+B_3^2)\mod 9\\ C_3\mod3.\end{cases}$$

By \cite[Lemma 4.1]{VA}, the local root number at $3$ is equal to
\begin{equation}W_3(\E_{P(m,n)})=\begin{cases}
-1 &\text{if }v_3(C)\equiv1,2\mod6\text{ and }C_3\equiv1\mod3\\
&\text{or if }v_3(C)\equiv4,5\mod6\text{ and }C_3\equiv2\mod3\\
&\text{or if }v_3(C)\equiv0\mod6\text{ and }P(m,n)_3\equiv5,7\mod9\\
&\text{or if }v_3(C)\equiv3\mod6\text{ and }P(m,n)_3\equiv2,4\mod9\\
+1 & \text{otherwise.}
\end{cases}\end{equation} 
and thus
\begin{equation}\label{biduletruc3}\omega_3(t)=\begin{cases}
+1 & \text{if }v_3(C)+2v_3(B)\equiv1,4\mod6\text{ and }C_3\equiv1\mod3\\
&\text{or if }v_3(C)+2v_3(B)\equiv2,5\mod6\text{ and }C_3\equiv2\mod3,\\
& \text{or si }v_3(C)+2v_3(B)\equiv0\mod6\text{ and }P(m,n)_3\equiv1,2,4,8\mod9\\
& \text{or si }v_3(C)+2v_3(B)\equiv3\mod6\text{ and }P(m,n)_3\equiv2,4\mod9\\
-1& \text{otherwise.}
\end{cases}\end{equation}

(b) Suppose that the coprime integers $(m,n)$ are such that $2v_3(B)+6v_3(n)<2v_2(A)+6v_3(m)$. One has $$v_3(P(m,n))\equiv v_3(C)\mod6\quad\text{ and }P(m,n)_{3}\equiv %\begin{cases} (3A_3^2+B_3^2)C_3 \mod9& \text{if }v_3(A)+3v_3(m)=v_3(B)+3v_3(n)\\
C_{3}B_3^2 \mod 9$$%&\text{otherwise} \end{cases}.
Note that $B_{3}^2$ will take values among the congruence classes $1,4,7\mod 9$, or $1\mod3$, and so $P(m,n)_3\equiv C_3\mod3$. Moreover, in case $v_3(A)+3v_3(m)=v_3(B)+3v_3(n)$, one has $$P(m,n)_3\equiv\begin{cases}4C_3&\text{if }B_3^2\equiv1\mod9\\
7C_3&\text{if }B_3^2\equiv4\mod9\\C_3&\text{if }B_3^2\equiv7\mod9.\end{cases}$$
%By \cite[Lemma 4.1]{VA}, the local root number at $3$ is equal to
%$$W_3(\E_{P(m,n)})=\begin{cases}
%-1 &\text{if }v_3(C)+2v_3(A)\equiv1,2\mod6\text{ and }C_3\equiv1\mod3\\
%&\text{if }v_3(C)+2v_3(A)\equiv4,5\mod6\text{ and }C_3\equiv2\mod3\\
%&\text{if }v_3(C)+2v_3(A)\equiv0\mod6\text{ and }P(m,n)_3\equiv5,7\mod9\\
%&\text{if }v_3(C)+2v_3(A)\equiv3\mod6\text{ and }P(m,n)_3\equiv2,4\mod9\\
%+1 & \text{otherwise.}
%\end{cases}$$ and thus
%$$w_3(\E_{P(m,n)})=\begin{cases}(-1)^{v_3(C)+1} &\text{if }v_3(C)\equiv1,2\mod6\text{ and }C_3\equiv1\mod3\\
%&\text{if }v_3(C)\equiv4,5\mod6\text{ and }C_3\equiv2\mod3\\
%&\text{if }v_3(C)\equiv0\mod6\text{ and }C_3B_3^2\equiv5,7\mod9\\
%&\text{if }v_3(C)\equiv3\mod6\text{ and }C_3B_3^2\equiv2,4\mod9\\
%(-1)^{v_3(C)} & \text{otherwise.}
%\end{cases}$$
%so that we have 
Thus we have
\begin{equation}\label{biduletruc3}\omega_3(t)=\begin{cases}
+1 & \text{if }v_3(C)+2v_3(B)\equiv1,4\mod6\text{ and }C_3\equiv1\mod3\\
&\text{ or if }v_3(C)+2v_3(B)\equiv2,5\mod6\text{ and }C_3\equiv2\mod3,\\
& \text{or if }v_3(C)+2v_3(B)\equiv0\mod6\text{ and }P(m,n)_3\equiv1,2,4,8\mod9\\
& \text{or if }v_3(C)+2v_3(B)\equiv3\mod6\text{ and }P(m,n)_3\equiv2,4\mod9\\
-1& \text{otherwise.}
\end{cases}\end{equation}

(c) Suppose now that $6v_3(n)+2v_3(B)>2v_3(A)+6v_3(m)+1$ (and that in particular, since $3\mid n$, then $3\nmid m$). In this case one has $v_3(P(m,n))=v_3(C)+2v_2(A)+1$ and $P(m,n)_{3}\equiv 
C_{3}A_{3}^2$. As previously, we find that with this choice of $(m,n)$, the value of $\omega_3$ is
%$$w_3(t)=\begin{cases}
%(-1)^{v_3(C)+1} & \text{if }v_3(C)+2v_2(A)\equiv0,1\mod6\text{ and }C_3\equiv1\mod3\\
%&\text{ and if }v_3(C)+2v_2(A)\equiv3,4\mod6\text{ and }C_3\equiv2\mod3,\\
%& \text{if }v_3(C)+2v_2(A)\equiv5\mod6\text{ and }C_3A_{3}^2\equiv5,7\mod9\\
%& \text{if }v_3(C)+2v_2(A)\equiv2\mod6\text{ and }C_3A_{3}^2\equiv2,4\mod9\\
%(-1)^{v_3(C)}& \text{otherwise.}
%\end{cases}$$
%and thus
\begin{equation}\label{chosebine3}\omega_3(t)=\begin{cases}
+1 & \text{if }v_3(C)+2v_2(A)\equiv1,4\mod6\text{ and }C_3\equiv2\mod3\\
&\text{or if }v_3(C)+2v_2(A)\equiv0,3\mod6\text{ and }C_3\equiv1\mod3,\\
& \text{or if }v_3(C)+2v_2(A)\equiv5\mod6\text{ and }P(m,n)_3\equiv5,7\mod9\\
& \text{or if }v_3(C)+2v_2(A)\equiv2\mod6\text{ and }P(m,n)_3\equiv2,4\mod9\\
-1& \text{otherwise.}
\end{cases}\end{equation}

%Remarking that in the following cases we have $C_3B^2\equiv5,7\mod9$:
%\begin{enumerate}
%\item $B^2\equiv1\mod9$ and $C_3\equiv5,7\mod9$
%\item $B^2\equiv4\mod9$ and $C_3\equiv4,8\mod9$
%\item $B^2\equiv7\mod9$ and $C_3\equiv1,2\mod9$
%\end{enumerate}
%and that $C_3B^2\equiv2,4\mod9$ in the following cases:
%\begin{enumerate}
%\item $B^2\equiv1\mod9$ and $C_3\equiv2,4\mod9$
%\item $B^2\equiv4\mod9$ and $C_3\equiv1,5\mod9$
%\item $B^2\equiv7\mod9$ and $C_3\equiv7,8\mod9$
%\end{enumerate}
We deduce that the function $\omega_3$ is constant in the cases listed in the lemma (and only in those cases). To achieve this, we compare the two formulas for each value of $k\mod3$.

When $v_3(A)\equiv v_3(B)\equiv0\mod3$, then %comparing the formulas in the three cases, we obtain
the equation \ref{chosebine3} %gives:
%\begin{equation}\label{chosebine3}w_3(t)=\begin{cases}
%+1 & \text{if }v_3(C)\equiv1,4\mod6\text{ and }C_3\equiv1\mod3\\
%&\text{ and if }v_3(C)\equiv0,3\mod6\text{ and }C_3\equiv2\mod3,\\
%& \text{si }v_3(C)\equiv5\mod6\text{ and }C_3A_3^2\equiv5,7\mod9\\
%& \text{si }v_3(C)\equiv2\mod6\text{ and }C_3A_3^2\equiv1,5,7,8\mod9\\
%-1& \text{otherwise.}
%\end{cases}\end{equation}
%and comparing with \ref{biduletruc3} we get:
compared with \ref{biduletruc3} gives:
$$\omega_3(t)=\begin{cases}
+1 & \text{if }v_3(C)\equiv0\mod6\text{ and }P(m,n)_3\equiv2,8\mod9\\
&\text{or if }v_3(C)\equiv1,4\mod6\text{ and }C_3\equiv1\mod3,\\
& \text{or if }v_3(C)\equiv2\mod6\text{ and }P(m,n)_3\equiv5,8\mod9\\
& \text{or if }v_3(C)\equiv3\mod6\text{ and }P(m,n)_3\equiv2\mod9\\
&\text{or if }v_3(C)\equiv5\mod6\text{ and }P(m,n)_3\equiv5\mod9\\
-1 &\text{if }v_3(C)\equiv0\mod6\text{ and }P(m,n)_3\equiv7\mod9\\
&\text{or if }v_3(C)\equiv1,4\mod6\text{ and }P(m,n)_3\equiv2\mod3,\\
& \text{or if }v_3(C)\equiv2\mod6\text{ and }P(m,n)_3\equiv1\mod9\\
& \text{or if }v_3(C)\equiv3\mod6\text{ and }P(m,n)_3\equiv1,7\mod9\\
&\text{or if }v_3(C)\equiv5\mod6\text{ and }C_3A^2\equiv1,4\mod9\\
\text{non-constant}& \text{otherwise.}
\end{cases}$$
\begin{comment}
\begin{equation}\label{chosebine3}\omega_3(t)=\begin{cases}
+1 & \text{if }v_3(C)\equiv0,3\mod6\text{ and }C_3\equiv2\mod3\\
&\text{or if }v_3(C)\equiv1,4\mod6\text{ and }C_3\equiv1\mod3,\\
& \text{or if }v_3(C)\equiv5\mod6\text{ and }C_3A_{(3)}^2\equiv5,7\mod9\\
& \text{or if }v_3(C)\equiv2\mod6\text{ and }C_3A_{(3)}^2\equiv1,5,7,8\mod9\\
-1 & \text{otherwise.}
\end{cases}\end{equation}
and comparing with \ref{biduletruc3} we get:
$$\omega_3(t)=\begin{cases}
+1 & \text{if }v_3(C)\equiv0\mod6\text{ and }C_3B^ 2\equiv1\mod9\\
%&\text{and if }v_3(C)\equiv1,4\mod6\text{ and }C_3\equiv1\mod3,\\
& \text{if }v_3(C)\equiv2\mod6\text{ and }C_3A_{(3)}^2\equiv2\mod9\\
%& \text{if }v_3(C)\equiv3\mod6\text{ and }C_3B_{(3)}^2\equiv2\mod9\\
&\text{if }v_3(C)\equiv5\mod6\text{ and }C_3A^2\equiv5\mod9\\
-1 & \text{if }v_3(C)\equiv2\mod6\text{ and }C_3A_{(3)}^2\equiv1,7\mod9\\
& \text{if }v_3(C)\equiv3\mod6\text{ and }C_3B_{(3)}^2\equiv8\mod9\\
&\text{if }v_3(C)\equiv5\mod6\text{ and }C_3A^2\equiv1,4\mod9\\
\text{non-constant}& \text{otherwise.}
\end{cases}$$
\end{comment}

When $v_2(A)\equiv1,2\mod3$, we proceed in a similar way and obtain that the cases where the root number is constant are those listed in the Tables. This is the same method for $v_2(A)=0$ and $v_2(B)\equiv1,2\mod3$. However, we have different subcases, for instance :
\begin{description}
\item[$v_3(A)=0$ and $v_3(B)=1$]
\begin{enumerate}
\item if $3\nmid m, n$, then $v_3(m,n)=a+1$ and $P(m,n)_3\equiv C_3(A_3^2+3B_3^2)\mod 9$,
\item if $3\nmid m$ and $3\mid n$, then $v_3(m,n)=a+1$ and $P(m,n)_3\equiv C_3A_3^2\mod 9$
\item if $3\mid m$ and $3\nmid n$, then $v_3(m,n)=a+2$ and $P(m,n)_3\equiv C_3B_3^2$
\end{enumerate}
\end{description}

\end{proof}

\begin{proof}of Lemma \ref{lemmej0w2}

%The strategy of the proof is to compute the value of the function $w(t)$ for different sets of values of $t$ with certain divisibility properties and then to compare those values to see when it stays constant.

There is only one of $A$ or $B$ at a time that may be divisible by 2. According to which of them is (or isn't), the formula for $w(t)$ is different.

%Suppose that $2\nmid B$ so that $v_2(A)\in\mathbb{N}$ and $v_2(B)=0$.
Let $(m,n)\in\Z\times\Z_{\leq0}$ be a pair of coprime integers. We have $$P(m,n)=2^{v_2(C)}C_2(2^{2v_2(A)+6v_2(m)}\cdot3A_2^2 m_2^6+2^{6v_2(n)+2v_2(B)}\cdot B_2^2n_2^6).$$
So, except if $2v_2(A)+6v_2(m)=2v_2(B)+6v_2(n)$, we have that $$v_2(P(m,n))=v_2(C)+\min(2v_2(A)+6v_2(m),2v_2(B)+6v_2(n)).$$

a) If $2v_2(B)+6v_2(n)< 2v_2(A)+6v_2(m)$, then $v_2(P(m,n))=v_2(C)+6v_2(n)\equiv v_2(C)\mod6$ and moreover $P(m,n)_2\equiv B_2^2C_2\equiv C_2\mod4$.  By \cite[Lemma 4.1]{VA}, we have 
$$W_2(\E_{t})=\begin{cases}
-1 & v_2(C)\equiv0,2\mod6\\
& v_2(C)\equiv1,3,4,5\mod6\text{ and }C_2\equiv3\mod4\\
+1 &\text{otherwise.}
\end{cases}$$
and thus

% $$
%w_2(t)=\begin{cases}
%\left(\frac{-1}{C_2}\right) & \text{if }v_2(C)\equiv1,3,4,5\mod6\text{ and }C_2\equiv1\mod4 \\
%-\left(\frac{-1}{C_2}\right) & \text{if }v_2(C)\equiv0,2 \\
%& \text{or if }v_2(C)\equiv1,3,4,5\mod6\text{ and }C_2\equiv3\mod4
%\end{cases}$$
%so that we have

\begin{equation}\label{formule02pask}
\omega_2(t)=\begin{cases}
+1 & \text{if }v_2(C)+2v_2(B)\equiv1,3,4,5\mod6 \\
& \text{or if }v_2(C)+2v_2(B)\equiv0,2\mod6\text{ and }C_2\equiv3\mod4\\
-1 & \text{if }v_2(C)+2v_2(B)\equiv0,2\mod6\text{ and }C_2\equiv1\mod4
\end{cases}\end{equation}

%Observe that this formula applies as well to any pair $(m,n)\in\Z^2$ such that $2\mid m$ since in this case $v_2(n)=0$ and thus $v_2(P(m,n))=0$.\\

b) If $6v_2(n)> 2v_2(A)$, then in particular, $2\nmid m$. Hence, $v_2(P(m,n))\equiv v_2(C)+2v_2(A)\mod6$ and $P(m,n)_{2}\equiv3C_2\mod4$.
%By \cite[Lemma 4.1]{VA}, we have 
%$$W_2(\E_{t})=\begin{cases}
%-1 & v_2(C)+2v_2(A)\equiv0,2\mod6\\
%& v_2(C)+2v_2(A)\equiv1,3,4,5\mod6\text{ and }C_2\equiv1\mod4\\
%+1 &\text{otherwise.}
%\end{cases}$$
%and thus
% $$
%w_2(t)=\begin{cases}
%-\left(\frac{-1}{C_2}\right) & \text{if }v_2(C)+2v_2(A)\equiv1,3,4,5\mod6\text{ and }C_2\equiv1\mod4 \\
% & \text{or if }v_2(C)+2v_2(A)\equiv0,2 \mod6\\
%\left(\frac{-1}{C_2}\right)& \text{if }v_2(C)+2v_2(A)\equiv1,3,4,5\mod6\text{ and }C_2\equiv3\mod4
%\end{cases}$$
In this case we have
$$\omega_2(t)=\begin{cases}
+1 & \text{if }v_2(C)+2v_2(A)\equiv1,3,4,5\mod6\\
& \text{or if }v_2(C)+2v_2(A)\equiv0,2\mod6\text{ and }C_2\equiv1\mod4\\
-1 & \text{if }v_2(C)+2v_2(A)\equiv0,2\mod6\text{ and }C_2\equiv3\mod4.
\end{cases}
$$
From these formulas, we now deduce the behavior of the function $\omega_2(t)$ when $6v_2(n)\not=2v_2(A)$.
For instance when $v_2(A)\equiv v_2(B)\equiv0\mod3$, then 
\begin{equation}\label{binouche}\omega_2(t)=\begin{cases}
+1& v_2(C)\equiv1,3,4,5\mod6\\%v_2(C)\equiv0,2\mod6\text{ and }C_2\equiv1\mod4\\
%-1& v_2(C)\equiv0,2\mod6\text{ and }C_2\equiv3\mod4\\
%& v_2(C)\\
\text{non-constant}&\text{otherwise.}
\end{cases}\end{equation}
We proceed in a similar way for the case $v_2(A)\equiv1,2\mod3$.

\begin{comment}
When $v_2(A)\equiv1\mod3$, then 
$$\omega_2(t)=\begin{cases}
-1& v_2(C)\equiv0,2\mod6\text{ and }C_2\equiv1\mod4\\
+1& v_2(C)\equiv0,4\mod6\text{ and }C_2\equiv3\mod4\\
\text{non-constant}&\text{otherwise.}
\end{cases}$$

For instance when $v_2(A)\equiv2\mod3$, then $$\omega_2(t)=\begin{cases}
-1&\text{if } v_2(C)\equiv0,2\mod6\text{ and }C_2\equiv1\mod4\\
+1&\text{if } v_2(C)\equiv2,4\mod6\text{ and }C_2\equiv3\mod4\\
&\text{if }v_2(C)\equiv1,3,4,5\mod6\text{and }C_2\equiv3\mod4\\
\text{non-constant}&\text{otherwise.}
\end{cases}$$
\end{comment}

c) If $v_2(A)\equiv v_2(B)\equiv0\mod3$, we need to proceed to a more raffined selection. Let $(m,n)$ be a pair such that $6v_2(n)=2v_2(A)$. In this case, one has $v_2(P(m,n))\equiv2\mod6$ %$v_2(P(m,n))=\begin{cases}v_2(C)+2&\text{if } A_2^2m_2^6\equiv B_2^2n_2^6\mod 8\\ v_2(C)+1&\text{otherwise}\end{cases}$ 
and $$P(m,n)_2\equiv\begin{cases}C_2\mod16&\text{if }A_2^2m_2^6\equiv B_2^2n_2^6\mod 16\\ 3C_2\mod16& \text{otherwise} \end{cases}.$$ Observe moreover that replacing $n_2$ by $n_2'$ such that $n'\equiv n_2+8\mod16$, then the value of $P(m,n)_2\mod4$ passes from $C_2$ to $3C_2$ and vice-versa. Thus we get the formula
$$\omega_2(t)=\begin{cases}
\text{non-constant} &\text{if }v_2(C)\equiv0,4\mod6\\
+1&\text{otherwise}.
\end{cases}.$$ Therefore, comparing with the formula (\ref{binouche}), we get that the function $\omega(t)$ is constant and equal to +1 in the case where $v_2(C)\equiv1,3,5\mod6$. % and $C_2\equiv3\mod4$. 
%c) If $v_2(A)\equiv v_2(B)\equiv0\mod3$, we need to proceed to a more raffined selection. Let $(m,n)$ be a pair such that $6v_2(n)=2v_2(A)$. In this case, one has $$v_2(P(m,n))=\begin{cases}v_2(C)+2&\text{if } A_2^2m_2^6\equiv B_2^2n_2^6\mod 8\\ v_2(C)+1&\text{otherwise}\end{cases}$$ and $$P(m,n)_2\equiv\begin{cases}C_2\mod16&\text{if }(A_2^2m_2^6,B_2^2n_2^6)\equiv (1,9),(9,1)\text{ or }(7,7)\mod 16\\ 3C_2& \text{else} \end{cases}.$$ Thus we get the formula
%$$\omega_2(t)=\begin{cases}
%-1 &\text{if }v_2(C)\equiv0,4\mod6\\
%+1&\text{otherwise}.
%\end{cases}.$$ Observe moreover that replacing $n_2$ by $n_2'$ such that $n'\equiv n_2+8\mod16$, then the value of $P(m,n)_2\mod4$ passes from $C_2$ to $3C_2$ and vice-versa. Therefore, comparing with the formula (\ref{binouche}), we get that the function $\omega(t)$ is constant and equal to +1 in the case where $v_2(C)\equiv2\mod6$ and $C_2\equiv3\mod4$. 
%Therefore, when $3\mid k$, the local root number is constant if and only if $v_2(C)\equiv1,3,5\mod6$.\\
The method is similar for the case $v_2(A)=0$ and $v_2(B)\equiv1,2\mod3$.
\begin{comment}
\textbf{Suppose now that $v_2(A)=0$} and so $v_2(B)=l\in\N$. In this case, whenever $6v_2(n)\not=2l+6v_2(m)$ we have $$v_2(P(m,n))=v_2(C)+min(6v_2(m),2l+6v_2(n)),$$ and $$P(m,n)_2\equiv\begin{cases}
3C_2\mod 4&\text{if }6v_2(m)<2l+6v_2(n)\\
C_2\mod 4&\text{otherwise.}
\end{cases}$$

Then by a similar argument as previously, we find that: \begin{description}
\item if $l\equiv1\mod3$, then
$$\omega_2(t)=\begin{cases}
+1&\text{if } v_2(C)\equiv0,2\mod6\text{ and }C_2\equiv3\mod4\\
-1&\text{if } v_2(C)\equiv0,4\mod6\text{ and }C_2\equiv1\mod4\\
\text{non-constant}&\text{otherwise;}
\end{cases}$$
\item if $l\equiv2\mod3$, then
$$\omega_2(t)=\begin{cases}
+1&\text{if } v_2(C)\equiv0\mod6\text{ and }C_2\equiv3\mod4\\
-1&\text{if } v_2(C)\equiv4\mod6\text{ and }C_2\equiv1\mod4\\
\text{non-constant}&\text{otherwise.}
\end{cases}$$
\end{description}
\end{comment}
%c) When $k\equiv0\mod3$, we need to proceed to a more raffined selection, because it is possible that the root number varies when we consider also the pairs $(m,n)$ such that $6v_2(n)=2k$. In this case, %$C_2(3A_2^2m_2^6+B^2n_2^6)\equiv4C_3\mod8$ and thus one has $v_2(P(m,n))=v_2(C)+2$ and we get the formula
%$$\omega_2(t)=\begin{cases}
%-1&\text{if }v_2(C)\equiv0,4\mod6\text{ and }P(m,n)_2\equiv3\mod4\\
%+1&\text{otherwise}.
%\end{cases}.$$ Observe that replacing $n_2$ by $n_2'$ such that $n'\equiv n_2+8\mod16$, then the value of $P(m,n)_2\mod4$ passes from $C_2$ to $3C_2$ and vice-versa. Therefore, the function $\omega(t)$ is constant and equal to +1 in the case where $v_2(C)\equiv1,2,3,5\mod6$. 
\end{proof}

\subsection{The elliptic surface $\mathscr{F}_{A,B,C}:y^2=x^3+C(A^2T^4+B^2)x$}
Let $\mathscr{F}$ be an elliptic surface given by the Weierstrass equation \begin{equation}\label{surfaceeqj1728}\mathscr{F}_{A,B,C}:y^2=x^3+C(A^2T^4+B^2)x,\end{equation}
where $A,B,C\in\Z$ and $\gcd(A,B)=1$.

\begin{lm}\label{j1728w3}
The local root number at 3 is $$W_3(\E_t)=\begin{cases}
+1&\text{if }v_3(C)\equiv1,3\mod4\\
&\text{if }v_3(C)\equiv2\mod4\text{ and }v_3(A),v_3(B)\text{ even}\\
-1&\text{if }v_3(C)\equiv0\mod4\text{ and }v_3(A),v_3(B)\text{ even}\\
\text{non-constant}& \text{otherwise.}\end{cases}$$%$v_3(AB)$ is even. In this case, we have $$W_3(\delta)=\begin{cases}
%-1 & \text{if } v_3(C)\equiv2\mod4\\
%+1 & \text{otherwise.}\\
%\end{cases}
%%$$
\end{lm}

\begin{proof}
%Recall the formula in this case (see \cite[Lemme 4.7]{VA}) :
%\begin{displaymath}
%W_3(\delta)=\begin{cases}
%-1 & \text{if } v_3(\delta)\equiv2\mod4\\
%+1 & \text{otherwise}\\
%\end{cases}
%\end{displaymath}
Let $A$, $B$, $C$ integers such that $\gcd(A,B)=1$. Let us write simply $\mathscr{F}=\mathscr{F}_{A,B,C}$.
For any $t\in\Q$ consider the pair of coprime integers $(m,n)\in\Z\times\Z_{<0}$ such that $t=\frac{m}{n}$.
For each fiber $\mathscr{F}_t$, let $\mathscr{F}_{m,n}$ be the curve given by the equation $\mathscr{F}_{m,n} : y^2=x^3+C(A^2m^4+B^2n^4)x$ which is $\Q$-isomorphic to $\mathscr{F}_t$ and thus have the same local root number (at any prime $p$). Put $$P(m,n)=C(A^2m^4+B^2n^4).$$
 The local root number at 3 of $\mathscr{F}_{m,n}$ only depends of $v_3(P(m,n))$.
%For every $l\in\Z$, we denote by $l_3$, the integer such that $l=3^{v_3(l)}l_3$.
For every $m,n\in\Z$ coprime, we have $$P(m,n)=3^{v_3(C)}C_3\left(3^{2v_3(A)+4v_3(m)}A_3^2m_3^4+3^{2v_2(B)+4v_3(n)}B_3^2n_3^4\right),$$
and thus 
$$v_3(P(m,n))=v_3(C)+\min(2v_3(A)+4v_3(m),2v_3(B)+4v_3(n)).$$
In case where $v_3(A)+2v_3(m)<v_3(B)+2v_3(n)$, we get the formula:
\begin{equation}W_3(\mathscr{F}_t)=\begin{cases}
-1 &\text{if } v_3(C)\equiv0\mod4\text{ and }v_3(C)+2v_3(A)\equiv2\equiv2\mod4\\
&\text{or if } v_3(C)\equiv2\mod4\text{ and }v_3(A)\text{ is odd,}\\
+1 & \text{otherwise.}
\end{cases}\end{equation}
The case $v_3(A)+2v_3(m)\geq v_3(B)+2v_3(n)$ is similar, but with the condition $v_3(C)+2v_3(B)\equiv2\equiv2\mod4$. 
Comparing those formulas, we obtain the conclusion of the Lemma.

%Considering the similar formulas that we get respectively when $v_3(A)+2v_3(m)<v_3(B)+2v_3(n)$ and when $v_3(A)+2v_3(m)=v_3(B)+2v_3(n)$, we obtain that the function $\omega(t)$ is always constant and equal to
%$$\omega_3(t)=\begin{cases}
%+1 & v_3(C)\equiv0\mod4\text{ and }v_3(A),v_3(B)\text{ are even,}\\
%-1 & v_3(C)\equiv2\mod4\text{ and }v_3(A),v_3(B)\text{ are even,}\\
%\text{non-constant}& \text{otherwise.}
%\end{cases}$$

%In the case where $v_3(n)=0$, we have $\delta(m,n)\equiv1\mod4$ and $v_3(\delta)=0$.
%In the case where $0<v_3(n)\leq \frac{k}{2}-1$, we have $\delta(m,n)_3\equiv3^{2k-4v_3(n)}+1\equiv1\mod3$ and $v_3(\delta)\equiv 2k-4v_3(n)\equiv k\mod4$.
%In the case where $v_3(n)=\frac{k}{2}$ (this only happens if $k\equiv0,2\mod4$), we have $v_3(\delta)\equiv 2k\mod4$ and $\delta_3\equiv2\mod4$.
%In the case where $v_3(n)\geq\frac{k}{2}+1$, we have $v_3(\delta)\equiv-2k\mod4$ and $\delta_3\equiv1\mod3$.

%Suppose $v_3(AB)$ is odd. Let $m_1,n_1\in\Z$ be coprime and not divisible by 3. We have $\delta(m_1,n_1)\equiv0\mod4$. Let $m,n\in\Z$ be coprime integers such that $3\mid n$. We have $\delta(m_2,n_2)\equiv2\mod4$. Therefore, we have $$W(\E_{m_1,n_1})=-W(\E_{m_2,n_2}).$$

%Suppose $v_3(AB)$ is even. Then $2k\equiv-2k\equiv0\mod4$. Therefore, $v_3(\delta(m,n))$ is constant for any value of $m,n\in\Z$ coprime. We have in this case: $$W(\E_{m,n})=-1\Leftrightarrow v_3(C)\equiv2\mod4.$$
\end{proof}

Define the function $\omega_2(t):=W_2(\mathscr{F}_t)\left(\frac{-2}{P_2(m,n)}\right)$.

\begin{lm}\label{j1728w2}
%Let $A,B,C\in\Z$ be coprime integer such that $2\nmid B$. Let $\E$ be an elliptic surface  given by the Weierstrass equation $$\E:y^2=x^3+C(A^2T^4+B^2)x.$$

%For $t=\frac{u}{v}\in\Q$, consider the elliptic curve $\E_{m,n}:y^2=x^3+C(A^2m^4+B^2n^4)x$ isomorphic to $\E_t$. Put $P(m,n)=C(A^2u^4+B^2v^4)$. %We denote by $\delta_2$ the integer such that $\delta=2^{\mathrm{ord}_2\delta}\delta_2$. 

The value of the function $\omega_2(t)$ %:=W_2(\E_t)\left(\frac{-2}{\delta_2}\right)$ 
 is constant when $t\in\Q$ varies if and only if one option is satisfied:
\begin{enumerate}
\item $A,B,C$ are in Table \ref{xconstantplus2} (in which case $\omega_2(t)=+1$)
\item $A,B,C$ are in Table \ref{xconstantmoins2} (in which case $\omega_2(t)=-1$).
\end{enumerate}

\end{lm}

\begin{center}
\begin{table}[!htb]
\begin{center}
\begin{tabular}{| c | c | c | c |}
\hline
$v_2(A)[2]$&$v_2(B)[2]$&$v_2(C)[4]$ & Additional conditions  
\\

\hline

1&0&0&$C_2A_2^2\equiv5,13,15\mod16$, $C_2B_2^2\equiv5,7,15\mod16$\\ 
%&&1,3&\\%$C_2A_2^2\equiv C_2\equiv7\mod8$\\ 
&&2&$C_2A_2^2\equiv5,7,15\mod16$, $C_2B_2^2\equiv5,13,15\mod16$\\
%&&3&$C_2\equiv5\mod8$\\
 \hline
 
0&1&0&$C_2B_2^2\equiv5,13,15\mod16$, $C_2A_2^2\equiv5,7,15\mod16$\\ 
%&&1,3&\\%$C_2B_2^2\equiv C_2A_2^2\equiv7\mod8$\\ 
&&2&$C_2B_2^2\equiv5,7,15\mod16$, $C_2A_2^2\equiv5,13,15\mod16$\\
%&&3&$C_2\equiv5\mod8$\\
 \hline

\end{tabular}
\end{center}
\caption{\label{xconstantplus2} Cases where $\omega_2(t)=+1$ for every fiber at $t\in\Q$ of the surface $\mathscr{F}_{A,B,C}$. 
}
\end{table}
\end{center}

\begin{center}
\begin{table}[!htb]
\begin{center}
\begin{tabular}{| c | c | c | c |}
\hline
$v_2(A)[2]$&$v_2(B)[2]$&$v_2(C)[4]$ & Additional conditions  
\\

\hline
0&0&0 & $C_2A_2^2,C_2B_2^2\equiv 1,7,9,11\mod16$\\ 
&&1&$C_2\equiv3\mod8$\\
&&2&$C_2A_2^2,C_2B_2^2\equiv 5,7,9,15\mod16$\\
&&3&$C_2\equiv5\mod8$\\
 \hline

1&0&0&$C_2A_2^2\equiv5,7,9,15\mod16$, $C_2B_2^2\equiv1,7,9,11\mod16$\\ 
&&1,3&\\ %$C_2A_2^2\equiv C_2B_2^2\equiv1,3\mod16$
&&2&$C_2A_2^2\equiv5,7,9,15\mod16$, $C_2B_2^2\equiv1,7,9,11\mod16$\\
 \hline
 
0&1&0&$C_2B_2^2\equiv5,7,9,15\mod16$, $C_2A_2^2\equiv1,7,9,11\mod16$\\ 
&&1,3&\\%$C_2B_2^2\equiv C_2A_2^2\equiv1,3\mod16$\\ 
&&2&$C_2B_2^2\equiv1,7,9,11\mod16$, $C_2A_2^2\equiv5,7,9,15\mod16$\\
 \hline

\end{tabular}
\end{center}
\caption{\label{xconstantmoins2} Cases where $\omega_2(t)=-1$ for every fiber $\mathscr{F}_t$ of the surface $\mathscr{F}_{A,B,C}$. 
}
\end{table}
\end{center}

\begin{proof}
For every choice of $m,n\in\Z$ coprime, let $\E_{m,n}:y^2=x^3+C(A^2m^4+B^2n^4)x$ be an elliptic curve $\Q$-isomorphic to $E_{\frac{m}{n}}$. We know the formula of the local root number at 2 by \cite[Lemme 4.7]{VA} (that we recall at Lemma \ref{VAx}). Moreover, recall that if $t$ is an odd integer, one has $$\left(\frac{-2}{t}\right)=\begin{cases}
+1 & \text{if }t\equiv1,3\mod8,\\
-1 & \text{otherwise.}
\end{cases}$$

Put, for every $m,n\in\Z$ coprime integers, $P(m,n)=C(A^2m^4+B^2n^4)$. 
We have
$$P(m,n)=2^{v_2(C)}C_2\left(2^{2v_2(A)+4v_2(m)}A_2^2m_2^4+2^{2v_2(B)+4v_2(n)}B_2^2n_2^4\right)$$
and thus, when $v_2(A)+2v_2(m)\not=v_2(B)+2v_2(n)$, one has
$$v_2(P(m,n))=v_2(C)+2\min(v_2(A)+2v_2(m),v_2(B)+2v_2(n)),$$ and $$P(m,n)_{2}\equiv \begin{cases} C_2(4A_2^2+B_2^2)\mod 4&\text{if }v_2(A) odd\\
C_2(A_2^2+4B_2^2)\mod 4&\text{if }v_2(B) odd\\
C_2B_2^2&\text{if }v_2(A)\geq2\text{ and even}\\
C_2A_2^2& \text{if }v_2(B)\geq2\text{and even}\end{cases}$$

When both $v_2(A)$ and $v_2(B)$ are even, it is possible that $v_2(A)+2v_2(m)=v_2(B)+2v_2(n)$, and in this case one has
$$v_2(P(m,n))=v_2(C)+2v_2(A)+4v_2(m)+2,$$ and $$P(m,n)_2\equiv C_2\left(\frac{A_2^2+B_2^2}{4}\right)\mod 16.$$
%As we will see later on, this will require a more raffine sorting.

%For now we suppose that $v_2(A)+2v_2(m)\not=v_2(B)+2v_2(n)$.

Suppose first $v_2(A)+2v_2(m)<v_2(B)+2v_2(n)$. In this case, we have $$v_2(P(m,n))=v_2(C)+2v_2(A)+4v_2(n)\equiv v_2(C)+2v_2(A)\mod4$$ and $$P(m,n)_2\equiv \begin{cases}B_2^2C_2\mod16&\text{if }v_2(A)\text{ even}\\5C_2&\text{if }v_2(A)\text{ odd and $B_2^2\equiv1\mod16$.}\\13C_2&\text{if }v_2(A)\text{ odd and $B_2^2\equiv9\mod16$.}\end{cases}$$. We have, 
\begin{equation}\label{j1728w2sup}
W_2(\E_{m,n})=\begin{cases}
-1 & \text{if }v_2(C)+2v_2(A)\equiv1,3\mod4\text{ and }P(m,n)_2\equiv1,3\mod8,\\
&\text{or if }v_2(C)+2v_2(A)\equiv0\mod4\text{ and }P(m,n)_2\equiv1,5,9,11,13,15\mod16,\\
& \text{or if }v_2(C)+2v_2(A)\equiv2\mod4\text{ and }P(m,n)_2\equiv1,3,5,7,11,15\mod16,\\
+1 & \text{otherwise.}
\end{cases}
\end{equation}
and thus
$$\omega_2(\E_t)=\begin{cases}
-1& \text{if }v_2(C)+2v_2(A)\equiv1,3\mod4\\%\text{ and }C_2B_2^2\equiv1,3\mod8\\
&\text{or if }v_2(C)+2v_2(A)\equiv2\mod4\text{ and }P(m,n)_2\equiv5,7,9,15\mod16\\
&\text{or if }v_2(C)+2v_2(A)\equiv0\mod4\text{ and }P(m,n)_2\equiv1,7,9,11\mod8\\

+1 &\text{otherwise.}
\end{cases}$$

The equation for $v_2(A)+2v_2(m)>v_2(B)+2v_2(n)$ is identical, with the role of $A$ and $B$ swapped since the equation of the surface is symmetric.

Moreover, note that as we supposed that $A$ and $B$ are coprime, at most one of them is divisible by $2$. %We can suppose that $v_2(B)=0$ and that $v_2(A)=k\in\mathbb{N}$ (if it is not the case, we just swap the roles of $A$ and $B$ since the equation is symmetric).

Comparing formulas for $v_2(A)+2v_2(m)>v_2(B)+2v_2(n)$ and $v_2(A)+2v_2(m)<v_2(B)+2v_2(n)$, we get some of the entries in Tables \ref{xconstantplus2} and \ref{xconstantmoins2}. Observe moreover that, given that $A^2,B^2\equiv1,9\mod16$, some cases describe by the conditions of one line of each formula are not possible. The only work left is to study more into details the case where both $v_2(A)$ and $v_2(B)$ are both even.

For these exceptions, we proceed to a more raffined sorting.

According to the values of $A^2m^4,B^2n_2^4$ (among $1,9,17,25\mod32$), we find the possible values of $P(m,n)_2$. We always have in this case $v_2(P(m,n))=v_2(C)+1\mod4$. Hence
Thus we have $$\omega_2(t)=\begin{cases}
-\left(\frac{-2}{C_2}\right)&\text{if } v_2(C)\equiv0,2\mod4\text{ and }P(m,n)_2\equiv1,3\mod8\\
&\text{or if }v_2(C)\equiv3\mod4\text{ and }P(m,n)_2\equiv1,5,9,11,13,15\mod16\\
&\text{or if }v_2(C)\equiv1\mod4\text{ and }P(m,n)_2\equiv1,3,5,7,11,15\mod16\\
\left(\frac{-2}{C_2}\right) &\text{otherwise.}\end{cases}$$

Observe that $m^4$ and $n_2^4$ can take the values $1,17\mod32$. Therefore, choosing a value of $n'$ such that $n'^4\equiv17n^4\mod32$, we have $P(m,n')\equiv9P(m,n)_2\mod16$. Therefore, we have $P(m,n)_2\in\{1,9\mod16\}$ if $A^2\equiv B^2\mod16$, and $P(m,n)_2\in\{5,13\}$ if $A^2\not\equiv B^2\mod16$, .

This means that $\omega_2(t)$ is non-constant when $v_2(C)\equiv1\mod4$ and $C_2\equiv11,15\mod16$, and when $v_2(C)\equiv3\mod4$ and $C_2\equiv1,5\mod16$. 

%Let $t_1,t_5,t_9,t_{13}$ (if they exist) be the rational numbers associated to a pair of coprime integers $(m_i,n_i)$ such that $P(m_i,n_i)_2\equiv C_2i\mod16$. The existence of $t_1,t_9$ holds if and only if $A^2\equiv B^2\mod16$ and the existence of $t_5,t_{13}$ if and only if $A^2\equiv B^2+8\mod16$. 
We obtain thus that in that case $$\omega_2(t)=\begin{cases}
-1&\text{if } v_2(C)\equiv0,2\mod4\\
&\text{or if }v_2(C)\equiv3\mod4\text{ and }C_2\equiv1\mod8\\
&\text{or if }v_2(C)\equiv1\mod4\text{ and }C_2\equiv3\mod8\\
+1&\text{if }v_2(C)\equiv3\mod4\text{ and }C_2\equiv5\mod8\\
&\text{or if }v_2(C)\equiv1\mod4\text{ and }C_2\equiv7\mod8\\
\text{non-constant}&\text{otherwise.}
\end{cases}$$
Comparing with the formula when $v_2(A)+2v_2(m)\not=v_2(B)+2v_2(n)$, we complete the Tables \ref{xconstantplus2} and \ref{xconstantmoins2}. In particular, there is no cases were $w_2(t)=+1$ for all $t\in\Q$ when $v_2(A)\equiv v_2(B)\equiv0\mod2$.

\end{proof}

\bibliographystyle{alpha}
\bibliography{bibliothese}
\end{otherlanguage}

\end{document}